\documentclass[a4paper,12pt]{amsart}
\usepackage{amssymb}
\usepackage{ifthen}
\usepackage{cite}
\usepackage[dvips]{graphicx}
\nonstopmode \numberwithin{equation}{section}

\usepackage{amssymb}
\usepackage{ifthen}
\usepackage{graphicx}
\usepackage{amsmath}
\usepackage[T1]{fontenc} %skandit
\usepackage[utf8]{inputenc}
\usepackage[usenames,dvipsnames]{color}
\usepackage{color}
\usepackage[english]{babel}
\usepackage{fancyhdr}
\usepackage{fancybox}
\usepackage{tikz}
\nonstopmode \numberwithin{equation}{section}
\theoremstyle{plain}

\newtheorem{thm}{Theorem}[section]
\newtheorem{cor}{Corollary}[section]
\newtheorem{lem}{Lemma}[section]
\newtheorem{note}{Note}[section]
\newtheorem{ques}{Question}[section]
\newtheorem{prop}{Proposition}

\newtheorem{conj}{Conjecture}

\theoremstyle{definition}
\newtheorem{defn}{Definition}[section]
\newtheorem{exm}{Example}[section]
\newtheorem{prob}{Problem}
\newtheorem{rem}{Remark}[section]

\newcounter{minutes}
\divide\time by 60
\newcounter{hours}
\multiply\time by 60
\addtocounter{minutes}{-\time}
\newcounter {own}
\def\theown {\thesection       .\arabic{own}}

\newenvironment{pf}[1][]{%
 \vskip 3mm
 \noindent
 \ifthenelse{\equal{#1}{}}%
  {{\slshape Proof. }}%
  {{\slshape #1.} }%
 }%
{\qed\bigskip}

\newcounter{alphabet}

\newcommand{\T}{{\mathcal T}}

\def\be{\begin{equation}}
\def\ee{\end{equation}}

\newcommand{\bee}{\begin{enumerate}}
\newcommand{\eee}{\end{enumerate}}

\newcommand{\blem}{\begin{lem}}
\newcommand{\elem}{\end{lem}}
\newcommand{\bthm}{\begin{thm}}
\newcommand{\ethm}{\end{thm}}
\newcommand{\bcor}{\begin{cor}}
\newcommand{\ecor}{\end{cor}}
\newcommand{\beg}{\begin{examp}}
\newcommand{\eeg}{\end{examp}}
\newcommand{\begs}{\begin{examples}}
\newcommand{\eegs}{\end{examples}}

\newcommand{\bdefn}{\begin{defn}}
\newcommand{\edefn}{\end{defn}}

\newcommand{\bprob}{\begin{prob}}
\newcommand{\eprob}{\end{prob}}
\newcommand{\bei}{\begin{itemize}}
\newcommand{\eei}{\end{itemize}}

\newcommand{\bcon}{\begin{conj}}
\newcommand{\econ}{\end{conj}}
\newcommand{\bcons}{\begin{conjs}}
\newcommand{\econs}{\end{conjs}}
\newcommand{\bprop}{\begin{prop}}
\newcommand{\eprop}{\end{prop}}
\newcommand{\br}{\begin{rem}}
\newcommand{\er}{\end{rem}}
\newcommand{\brs}{\begin{rems}}
\newcommand{\ers}{\end{rems}}
\newcommand{\bo}{\begin{obser}}
\newcommand{\eo}{\end{obser}}
\newcommand{\bos}{\begin{obsers}}
\newcommand{\eos}{\end{obsers}}
\newcommand{\bpf}{\begin{pf}}
\newcommand{\epf}{\end{pf}}
\newcommand{\ba}{\begin{array}}
\newcommand{\ea}{\end{array}}
\newcommand{\beq}{\begin{eqnarray}}
\newcommand{\beqq}{\begin{eqnarray*}}
\newcommand{\eeq}{\end{eqnarray}}
\newcommand{\eeqq}{\end{eqnarray*}}

\begin{document}

\title{Meromorphic Solutions of Difference Equations Involving Borel and Nevanlinna Exceptional Values}

\author{Molla Basir Ahamed$^*$}
\address{Molla Basir Ahamed,
	Department of Mathematics, Jadavpur University, Kolkata-700032, West Bengal, India.}
\email{mbahamed.math@jadavpuruniversity.in}

\author{Vasudevarao Allu}
\address{Vasudevarao Allu,
	Department of Mathematics, School of Basic Sciences,
Indian Institute of Technology Bhubaneswar,
Bhubaneswar-752050, Odisha, India.}
\email{avrao@iitbbs.ac.in}

\subjclass[{AMS} Subject Classification:]{30D35, 30D20}
\keywords{Meromorphic function, shared values, difference operators, solutions of difference equation, Borel exceptional value, Nevanlinna exceptional value, finite order}

\def\thefootnote{}
\footnotetext{ {\tiny File:~\jobname.tex,
printed: \number\year-\number\month-\number\day,
          \thehours.\ifnum\theminutes<10{0}\fi\theminutes }
} \makeatletter\def\thefootnote{\@arabic\c@footnote}\makeatother

\begin{abstract}
The existence of meromorphic solutions to various difference equations has been extensively studied in recent years, the precise functional forms of such solutions—particularly when the function and its difference operators share values—remain largely unexplored. This paper addresses this research gap by investigating the sharing value problem between finite-order meromorphic functions $f(z)$ and their linear difference operators $L_c^n(f)$. Specifically, we consider functions having Borel or Nevanlinna exceptional values. We prove not only the existence but also characterize the explicit general meromorphic solutions to the difference equation $L_c^n(f) \equiv Af$ for $A \in \mathbb{C} \setminus \{0\}$. To validate our main results and demonstrate the necessity of our conditions, we provide several concrete examples. Furthermore, we investigate the existence and nature of both rational and transcendental meromorphic solutions for the second-order difference equation $b_2(z)f(z+2\eta) + b_1(z)f(z+\eta) + b_0(z)f(z) = b(z)$ with polynomial coefficients.
\end{abstract}

\maketitle
\pagestyle{myheadings}
\markboth{Molla Basir Ahamed and Vasudevarao Allu}{Sharing value problem and meromorphic solutions of difference equations}

\section{\bf Introduction}
\par The complex oscillation theory of meromorphic solutions of differential equation is an important topic in complex analysis (see \cite{Yan & Yi-2006}), where Nevanlinna theory is an effective research tool. In the recent years, research on finding meromorphic solutions of difference equations in view of shared values are extensively studied (see e.g. \cite{Aha & RM & 2019,Hal & Kor-AASFM-2006,Hal & Kor - JPA-2007,Risto-TAMS-2020}) after the development of the difference analogue lemma of logarithmic derivatives of meromorphic functions. However, little is known about the precise form of the entire or meromorphic solutions of the difference equations that eventually appear as conclusions of the results when shared value considered. With the recent development regarding sharing value problems between meromorphic functions and finding the exact class of all meromorphic solutions, we refer the reader to the article  \cite{Ahamed-IJPAM-2022}. Focusing on shared value problems for meromorphic functions, using Nevanlinna theory as theoretical tool, in this paper, we study certain properties of meromorphic solutions of difference equations. Moreover, our prime concern will be to find the precise form of the meromorphic solutions to the results we prove in this paper. \vspace{1.2mm} 

\par Before proceeding, we spare the reader for a moment and assume some familiarity with the basics of Nevanlinna theory of meromorphic functions in $ \mathbb{C} $ such as the first and second main theorems, and the usual notations such as the \emph{characteristic function} $ T(r,f) $, \emph{proximity function} $ m(r,f) $ and \emph{counting function} $ N(r,f)\; (\overline{N}(r,f)) $ of poles, for more details, we refer the reader to glance through \cite{Hay-1964,Yan - 1993}. In addition, we use the notation $ \lambda(f) $ for the exponent of convergence of the sequence of zeros of a meromorphic function $ f $, and $ \sigma(f) $ to denote the order of growth of $ f $. Finally, $ \sigma_2(f) $ denotes hyper-order (see \cite{Yan & Yi-2003}) of $ f $ which is defined by 
\begin{equation*}
	\sigma_2(f)=\limsup_{r\rightarrow\infty}\frac{\log\log T(r,f)}{\log r}.
\end{equation*}
\par Two non-constant meromorphic functions $ f $ and $ g $ are said to share the value $ a $ $ CM $ $ (IM) $ if $ f-a $ and $ g-a $ have the same set of zeros counting multiplicities (ignoring multiplicities). In addition, we say that $ f $ and $ g $ share the value $ \infty $ $ CM (IM) $ if $ f $ and $ g $ have the same set of poles counting multiplicities (ignoring multiplicities). Shared value problems of meromorphic functions and different aspects of it gained a valuable space in the literature of meromorphic functions (see for example, \cite{Gun - TAMS-1983,Gun - TAMS-1987}) and references therein. \vspace{1.5mm}

\par It was proved in \cite{Whit - 1935} that  for a given entire function $ \Psi(z) $  with order $ \sigma(\Psi)=\sigma $, the equation $ f(z+1)=\Psi(z)f(z) $
admits a meromorphic solution of order $ \sigma(f)\leq \sigma+1. $ Later, {Bank and Kaufman} \cite{Ban & Kau - 1976} proved that the difference equation $ 	f(z+1)-f(z)=R(z) $ has non-trivial meromorphic solution $ y(z) $ such that $ T(r,f)=O(r). $ However, {Yanagihara} \cite{Func Ekvac - 1980} studied the difference equation $f(z+1)=R(f(z)) $ and proved that it has a non-trivial meromorphic solution in $ \mathbb{C} $, where $ R $ is a rational function. Moreover, {Shimomura} \cite{Shi - 1981} proved that the difference equation $ f(z+1)=P(f(z)) $, where $ P $ is a polynomial, admits a non-trivial entire solution. The difference analogue to Nevanlinna theory have been established  by {Halburd and Korhonen} \cite{Hal & Kor-AASFM-2006,Hal & Kor - JPA-2007}, {Chiang and Feng} \cite{Chi & Fen-RM-2008}, independently, and improved by {Halburd \emph{et al.}} \cite{H & K & T - TAMS -2014} from finite order meromorphic functions to infinite order (hyper-order strictly less than $ 1 $), which is a powerful theoretical tool to study complex difference equations (see \cite{Che & Yi-RM-2013}), the uniqueness problem of meromorphic functions taking into shifts (see \cite{Ban & Aha & MS & 2019,Chi & Fen-RM-2008}) or difference operators (see \cite{Aha & RM & 2019,Ahamed-IJPAM-2022,Che & Yi-RM-2013}). The uniqueness problem  value sharing by meromorphic function $ f(z) $ with its shift $ f(z+c) $ or difference operator $ \Delta_{c}f=f(z+c)-f(z) $ and the corresponding solution of the complex differences equations have been extensively studied. For the combination of shifts operators $ f(z+nc), \ldots $, $ f(z+c) $ with $ f(z) $, however, there have been a few investigations on finding the general (entire or meromorphic) solutions of complex difference equations.\vspace{1.5mm}

\par The study of non-constant meromorphic solutions of complex difference equations has a long history. For the last few decades, there has been renewed interest in studying the nature of solutions of difference equations in the complex plane $ \mathbb{C} $  (see \cite{Che - 2011,Chen JMMA 2013,Chi & Fen-RM-2008,Hei & Kor & Lai -JMMA-2009,Risto-TAMS-2020,Jia & Chen 2013,Li-Hao-Yi-MJM-2021,Wang JMMA 2011}). In particular and most noticeably, the results of {Ablowitz} \emph{at al.} \cite{Abl & Hal & Her 2000}, using the idea of the order of growth of meromorphic functions in the sense of classical Nevanlinna theory \cite{Hay-1964} as a detection of solvability of the second order non-linear difference equation in $ \mathbb{C} $. In contrast to the differential equation, it is well-known (see \cite{Abl & Hal & Her 2000}) that non-linear difference equations often admit global meromorphic solutions and hence one can apply difference analogue theories of Nevanlinna to characterize nature of the solutions. Therefore, it is difficult to study the solvability of certain difference equations in a complex plain $ \mathbb{C} $. The logarithmic derivative lemma and its difference analogue lemma, play a crucial role in this study. Considering the problem of sharing one value $ CM $ between  finite order entire function $ f $ and its difference operators $ \Delta_{c}f $,  {Chen and Yi} \cite{Che & Yi-RM-2013} have established a relationship between them. 
\begin{thm}\cite{Che & Yi-RM-2013}\label{th-1.1}
Let $ f $ be a finite order transcendental entire function which has a finite Borel exceptional value $ a $, and let $ \eta\in\mathbb{C} $ be a constant such that $ f(z+c)\not\equiv f(z) $. If $ \Delta_{c}f $ and $ f $ share the value $ a $ $ CM $, then $ 	a=0\;\;\text{and}\;\; \Delta_{c}f\equiv {A} f, $ where $ {A} $ is a non-zero constant.
\end{thm}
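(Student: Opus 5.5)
Since $f$ is entire of finite order with Borel exceptional value $a$, Hadamard factorization gives
\[
f(z)=a+H(z)e^{\alpha(z)},
\]
where $H\not\equiv 0$ is entire with $\lambda(H)=\sigma(H)<\sigma(f)$ and $\alpha$ is a polynomial with $\deg\alpha=\sigma(f)=:k\ge 1$. Indeed, $a$ is Borel exceptional, $f$ is transcendental, and $\sigma(f)$ must be a positive integer. Then $\Delta_c f=H(z+c)e^{\alpha(z+c)}-H(z)e^{\alpha(z)}$, and $\Delta_c f\not\equiv 0$ by hypothesis. Since $f-a=He^{\alpha}$ and $\Delta_c f-a$ share $0$ CM and both have finite order, we may write
\begin{equation*}
\Delta_c f-a=e^{\beta}(f-a)
\end{equation*}
with $\beta$ a polynomial of degree at most $k$. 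This yields the key identity
\begin{equation*}
H(z+c)e^{\alpha(z+c)-\alpha(z)}-H(z)-e^{\beta(z)}H(z)=a\,e^{-\alpha(z)}.\tag{$*$}
\end{equation*}

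Next we show $a=0$ by comparing growth. Put $G(z)=H(z+c)e^{\alpha(z+c)-\alpha(z)}-(1+e^{\beta(z)})H(z)$. Here $\alpha(z+c)-\alpha(z)$ has degree at most $k-1$ and $\sigma(H)<k$. If $\deg\beta<k$, then $\sigma(G)<k$ while $\sigma(ae^{-\alpha})=k$ when $a\ne0$, which is impossible. If $\deg\beta=k$, rewrite $(*)$ as
\[
e^{\beta}H+ae^{-\alpha}=H(z+c)e^{\alpha(z+c)-\alpha(z)}-H,
\]
where the right side has order less than $k$. Comparing leading coefficients of $\beta$ and $-\alpha$: if $\beta+\alpha$ has degree $k$, a Borel-type lemma for $\sum g_je^{h_j}\equiv0$, with coefficients of order $<k$ and pairwise differences of exponents of degree $k$, forces $H\equiv0$ or $a=0$. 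If $\deg(\beta+\alpha)<k$, then $e^{\beta}H+ae^{-\alpha}=e^{-\alpha}(He^{\beta+\alpha}+a)$. The right side has order $<k$, so $He^{\beta+\alpha}+a\equiv0$ by the same lemma. Hence $f-a=He^{\alpha}=-ae^{-\beta}$, so $f-a$ has no zeros, which is allowed. Substituting back gives $\Delta_c f=-a(e^{-\beta(z+c)}-e^{-\beta(z)})$, and $\Delta_c f-a=e^{\beta}(f-a)=-a$, so $\Delta_cf\equiv0$, a contradiction. In all cases $a=0$.

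With $a=0$, the identity $(*)$ reads $H(z+c)e^{\alpha(z+c)-\alpha(z)}=(1+e^{\beta})H$. The left side has order $<k$, so $e^{\beta}$ must have order $<k$ as well, since otherwise $(1+e^{\beta})H$ would have order $k$. Thus $\deg\beta\le k-1$. To conclude that $\beta$ is constant, which is the main obstacle, I would argue directly from $\Delta_c f=e^{\beta}f$ via the difference logarithmic derivative lemma:
\[
m(r,e^{\beta})=m\bigl(r,\Delta_cf/f\bigr)=S(r,f)=O(r^{k-1+\varepsilon}),
\]
which gives $T(r,e^{\beta})=o(r^{k-1})$ if $k-1\ge1$. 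Hence $\deg\beta<k-1$. This bound is not sharp enough by itself, so I would combine it with a zero-counting argument on $H$: $H(z+c)/H(z)=(1+e^{\beta})e^{-(\alpha(z+c)-\alpha(z))}$ is zero-free and pole-free. If $\beta$ were nonconstant, $1+e^{\beta}$ would have many zeros that must be zeros of $H(z+c)$ not cancelled by $H(z)$. Iterating the shift then produces zeros of $H$ along chains, and I expect to derive $\lambda(H)\ge\deg\beta+1$ or a contradiction with $\sigma(H)<k$. Sharpening this to force $\beta$ constant is the delicate point. If direct counting fails, I would fall back on the original source's approach, a lemma on functions $H$ with $H(z+c)=P(z)H(z)$. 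Once $\beta$ is constant, set $A=e^{\beta}\ne0$, giving $\Delta_cf\equiv Af$.
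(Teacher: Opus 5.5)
Your reduction to $a=0$ is sound. The factorization, the identity $(*)$, the split $\deg\beta<k$ versus $\deg\beta=k$ via Lemma~\ref{lem2.8}, the degenerate case $He^{\alpha+\beta}\equiv-a$ forcing $\Delta_cf\equiv0$, and the bound $\deg\beta\le k-1$ once $a=0$ all work. (In the subcase $\deg(\alpha+\beta)=k$ the lemma forces $H\equiv0$, so that subcase is simply impossible.)

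The gap is the final step, which is the real content of the theorem: you never prove that $\beta$ is constant. The estimate you invoke cannot do it. From $m(r,\Delta_cf/f)=O(r^{k-1+\varepsilon})$ for every $\varepsilon>0$ you cannot conclude $T(r,e^{\beta})=o(r^{k-1})$, since this is compatible with $T(r,e^{\beta})\asymp r^{k-1}$. So you only recover $\deg\beta\le k-1$, which you already had. Applying the difference lemma to $f$ can never do better, because $f(z+c)/f(z)$ contains the factor $e^{\alpha(z+c)-\alpha(z)}$, of order exactly $k-1$. The chain-counting on zeros of $H$ is only announced, and it is delicate. The zeros of $1+e^{\beta}$ may all lie on one $c$-orbit: for $\beta(z)=2\pi iz/c+\pi i$ they are exactly $c\mathbb{Z}$. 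Counting chains without multiplicities then only gives $\lambda(H)\ge\deg\beta$, which contradicts nothing.

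The missing idea is to strip off $e^{q}$, where $q(z):=\alpha(z+c)-\alpha(z)$ has degree $k-1$, and apply the Chiang--Feng pointwise estimate, Lemma~\ref{lem22.6}, to $H$ rather than to $f$. With $a=0$ the quotient $H(z+c)/H(z)=(1+e^{\beta})e^{-q}$ is entire. Hence $T\bigl(r,H(z+c)/H(z)\bigr)=m\bigl(r,H(z+c)/H(z)\bigr)\le r^{\sigma(H)-1+\varepsilon}$ outside a set of finite logarithmic measure, which is harmless for the order. This gives $\sigma\bigl(H(z+c)/H(z)\bigr)\le\sigma(H)-1<k-1$. Now suppose $k\ge2$ and $\beta$ is nonconstant, and look at
\[ 1+e^{\beta(z)}=\frac{H(z+c)}{H(z)}\,e^{q(z)} . \]
The right-hand side has order exactly $k-1$, and its zeros have exponent of convergence $<k-1$. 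The left-hand side satisfies $\sigma(1+e^{\beta})=\lambda(1+e^{\beta})=\deg\beta$. Equating orders gives $\deg\beta=k-1$, while equating zeros gives $\deg\beta<k-1$, a contradiction. So $\beta$ is constant and $A=e^{\beta}$; for $k=1$ your bound $\deg\beta\le0$ already suffices.

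This is exactly the argument of Case 1 in the paper's proof of Theorem~\ref{th2.2}. That proof starts from the same factorization $f=He^{h}+a$ and reduces to the present theorem for $n=1$, $a_1=1$, $a_0=-1$. The rest of your proof matches the paper's Case 2 and Step II, reorganized so that $a=0$ comes first.
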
  
\begin{rem}
In Theorem \ref{th-1.1}, we see that if $ f $ has a non-zero finite Borel exceptional value $ a $, then $ a $ is not shared by $ \Delta_{c}f $ and $ f $.  For example, if we consider $ f(z)=be^z+a $, then it follows that $ f $ has the Borel exceptional value $ a $. Clearly, for $ c\neq 2k\pi i $, $ k\in\mathbb{Z} $, the value $ a $ is not shared by $ \Delta_{c}f=b(e^{c}-1)e^z $ and $ f(z) $.
\end{rem}
{In \cite[Remark 1.2]{Che & Yi-RM-2013}, the authors illustrate that for $f(z)=e^z$, the difference operator satisfies $\Delta_c f = Af$ with $A=e^c-1$, suggesting that $A$ depends solely on $c$. However, we demonstrate via Examples \ref{Exm-1.1} and \ref{Exm-1.3} that such conclusions often depend on specific functional structures and are not universally applicable. While existing literature (e.g., \cite{Che & Yi-RM-2013}) primarily addresses the existence or uniqueness of solutions under sharing conditions, it frequently fails to determine the precise functional form of $f(z)$. This discrepancy highlights the necessity of our approach, which provides a complete characterization of the solution space for the general operator $L_c^n(f)$ by identifying the underlying periodic coefficients.}
\begin{exm}\label{Exm-1.1}
Let $ f(z)=\displaystyle e^{\frac{z \ln 5}{\omega}} \exp\left(({2\pi i}/{\omega}) z\right) $, choose, $ {A}=4 $ and $ c=\omega $, where $ \omega $ is a non-real cube root of unity. Clearly,  $ \Delta_{c}f\equiv 4 f $ but the constant $ {A}=4 $ is independent of $ c=\omega $. Moreover, we consider the functions
\begin{align*}
	f_1(z)=\displaystyle\left(\pi^{\pi}\right)^{z/{i}} \sin\left(({2\pi}/{i})z\right)\; \mbox{and}\; f_2(z)=\displaystyle\left(\pi^{\pi}\right)^{z/{i}} \cos\left(({2\pi}/{i})z\right),
\end{align*} $ c=i $ and $ \mathcal{A}=\pi^{\pi}-1 $. Evidently,  $ \Delta_{c}f_j\equiv \left(\pi^{\pi}-1\right) f_j $ for $ j=1, 2 $, but the constant $ {A}=\pi^{\pi}-1 $ is independent of $ c $.
\end{exm}
Indeed, a result established in \cite{Che & Yi-RM-2013} for meromorphic functions shows that the restriction `\textit{$f$ has a finite Borel exceptional value}' in Theorem \ref{th-1.1} can be omitted.
\begin{thm}\cite{Che & Yi-RM-2013}\label{th-1.2}
\textrm{Let $ f $ be a transcendental meromorphic function such that its order of growth $ \sigma(f) $ is not an integer or infinite, and let $ \eta\in\mathbb{C} $ be a constant such that $ \Delta_{c}f\not\equiv 0 $. If $ \Delta_{c}f $ and $ f $ share three distinct values $ a, b$ and $ \infty $ $ CM $, then $ f(z+c)=2f(z) $.}
\end{thm}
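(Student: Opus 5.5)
Write $g(z)=f(z+c)-f(z)$, so $\Delta_c f=g$, and suppose $g$ and $f$ share $a,b,\infty$ CM. The plan has two stages. First use the CM sharing and finite order to express $g$ as a M\"obius transform of $f$ involving $e^{P}$ for a polynomial $P$. Then use that $\sigma(f)$ is not an integer to rule out every configuration except $g\equiv f$, i.e. $f(z+c)=2f(z)$.

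\textbf{Step 1 (order of $g$).} By the difference analogue of the logarithmic derivative lemma (Halburd--Korhonen, Chiang--Feng), $T(r,f(z+c))=T(r,f)+S(r,f)$ for finite order $f$, so $\sigma(g)\le\sigma(f)$. Since $\infty$ is shared CM, $g$ and $f$ have the same poles with multiplicity. Because $a$ and $b$ are shared CM, the auxiliary function
\[
\Phi=\frac{(g-a)(f-b)}{(g-b)(f-a)}
\]
has no zeros or poles; zeros and poles of the numerator and denominator cancel, including at common poles. The order of $\Phi$ is finite, so $\Phi=e^{P}$ with $P$ a polynomial and $\deg P\le\sigma(f)$. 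Since $\sigma(f)\notin\mathbb{Z}$, we get $\deg P<\sigma(f)$, and hence $T(r,e^P)=S(r,f)$ on a set of full measure in the appropriate sense. Here I would use that $\sigma(f)$ is non-integer, so $T(r,f)$ dominates $r^{\deg P+\varepsilon}$ along a sequence.

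\textbf{Step 2 (identify $g$).} Solving the relation in Step 1 for $g$ gives
\[
g=\frac{a(f-b)-b e^{P}(f-a)}{(f-b)-e^{P}(f-a)}.
\]
If $e^P\equiv1$, then $g\equiv f$ and we are done. Otherwise, I would use the standard three-shared-values argument (Gundersen, Mues). Suppose $e^P\not\equiv 1$. Consider the zeros of $(f-b)-e^P(f-a)$, i.e. the points where $f=(b-ae^P)/(1-e^P)$; these are poles of $g$ that are not poles of $f$, unless the numerator also vanishes there. Since poles are shared, such points must be cancelled. Comparing numerator and denominator at those points forces
\[
f=\frac{b-ae^P}{1-e^P}=:\beta
\]
to be a small function that $f$ essentially omits, namely $\overline N(r,1/(f-\beta))=S(r,f)$.

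\textbf{Step 3 (contradiction).} Apply the second main theorem for small functions with targets $a$, $b$, $\beta$ and $\infty$. I would count the $a$-points, $b$-points and poles of $f$ through $g-f=f(z+c)-2f(z)$, using CM sharing as in Chen--Yi. The aim is to show that $T(r,f)$ is controlled by $T(r,e^P)+S(r,f)=S(r,f)$, or else that $f$ is a M\"obius function of $e^{P}$; either way $\sigma(f)=\deg P$ is an integer, which is a contradiction. The key point is that the M\"obius form in Step 2 with $e^P\not\equiv1$ would force $f$ to have integer order.

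The main obstacle is Step 3: converting the Step 2 relation into a growth contradiction rigorously, including the small-function version of the second main theorem and the handling of $\beta$ when $a$ or $b$ is $0$. I would first try Gundersen's classical estimate, which says that meromorphic functions sharing three values CM that are not M\"obius transformations of each other have $T(r,f)$ comparable to the counting of shared points. That estimate leads to $f$ being expressible through $e^{P}$, and from this $\sigma(f)\in\mathbb{Z}$ follows.
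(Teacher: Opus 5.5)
\textbf{Verdict: genuine gap.} Steps 1 and 2 are fine as far as they go, but Step 3 is where the proof has to happen, and it is not supplied. The route you sketch would not close it.

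The second main theorem with targets $a,b,\beta,\infty$ gives at best $(2-\varepsilon)T(r,f)\le \overline N(r,\frac{1}{f-a})+\overline N(r,\frac{1}{f-b})+S(r,f)$, even once you know that the poles and the $\beta$-points are negligible. This is perfectly consistent, because $f$ may have plenty of $a$- and $b$-points. Gundersen's estimate likewise compares $T(r,f)$ with counting functions of the shared points, but it does not produce a formula for $f$ in terms of $e^{P}$.

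There is also a problem upstream. The inequality $\deg P<\sigma(f)$ does not give $T(r,e^{P})=S(r,f)$. A non-integer order only yields $T(r,f)\ge r^{\sigma(f)-\varepsilon}$ along a sequence, and the lower order of $f$ may lie below $\deg P$. So small-function Nevanlinna theory is not available here; only comparisons of orders are. The underlying reason you get stuck is that passing to the single cross-ratio $\Phi$ throws information away.

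\textbf{The paper's route.} The paper's argument (in the proof of its generalization, Theorem~\ref{th1.2}) keeps the two quotients separate.
Sharing $a$ and $\infty$ CM gives $(g-a)/(f-a)=e^{\alpha}$, and sharing $b$ and $\infty$ CM gives $(g-b)/(f-b)=e^{\beta}$, with $\deg\alpha,\deg\beta<\sigma(f)$.
Subtracting eliminates $g$: $(e^{\beta}-e^{\alpha})f=a(1-e^{\alpha})-b(1-e^{\beta})$.
If $e^{\alpha}\not\equiv e^{\beta}$, this exhibits $f$ as a quotient of functions of order at most $\max\{\deg\alpha,\deg\beta\}<\sigma(f)$, which is impossible.
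So $e^{\alpha}\equiv e^{\beta}$, and then $(a-b)(1-e^{\beta})\equiv 0$ forces $e^{\beta}\equiv 1$, i.e.\ $g\equiv f$.

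\textbf{Repairing your own framework.} You can also finish within your setup by using the shared poles fully. Assume $e^{P}\not\equiv1$ and set $D:=(f-b)-e^{P}(f-a)$.
\begin{enumerate}
\item A pole of $f$ at a point where $e^{P}\neq 1$ would make $g$ finite there. Hence poles of $f$ lie only at zeros of $1-e^{P}$.
\item At such a point, comparing pole orders in $gD=(a-be^{P})f-ab(1-e^{P})$ shows that CM sharing of $\infty$ forces $D$ to be holomorphic and non-zero.
\item Your Step 2 observation gives the rest: a zero of $D$ where $f$ is finite would be a pole of $g$.
\end{enumerate}
So $D$ is entire and zero-free; in fact $D=(a-b)e^{-\beta}$. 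Writing $D=e^{Q}$ with $Q$ a polynomial, you get $f=(b-ae^{P}+e^{Q})/(1-e^{P})$. Now $\deg P<\sigma(f)$ and $\deg Q\le\sigma(f)$, and $\sigma(f)\notin\mathbb{Z}$ gives $\deg Q<\sigma(f)$. Hence $\sigma(f)\le\max\{\deg P,\deg Q\}<\sigma(f)$, which is the contradiction you were after.

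Either way, the missing idea is an exact two-exponential representation obtained by eliminating $g$, not a second-main-theorem estimate.
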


\begin{exm}\cite{Che & Yi-RM-2013}
Suppose that $ f(z)=e^z D(z) $, where $ D(z) $ is a periodic function with period $ c=\log 2 $, and its order of growth $ \sigma(D) $ is not an integer or infinite. It follows that $ \Delta_{c} $ and $ f(z) $ share the values $ 1 $ and $ 2 $ $ CM $ and $ f(z+c)=2f(z) $ holds good.
\end{exm}
The above example confirms the existence of a function satisfying Theorem \ref{th-1.2}; however, the general solution of $f(z+c)=2f(z)$ remained unexplored. Subsequently, Zhang and Liao \cite{Zha & Lia-2014} established the precise form of these solutions, thereby improving upon the result of Chen and Ye \cite{Che & Yi-RM-2013}.
\begin{thm}\cite{Zha & Lia-2014}\label{th-1.3}
	Let $ f(z) $ be a transcendental entire function of finite order, and  $ a $, $ b $ be two distinct constants. If $ \Delta_1f(\not\equiv 0) $ and $ f $ share $ a,\;b $ $ {CM} $, then $ \Delta_1f\equiv f $. Furthermore, $ f(z) $ assumes the form $ f(z)=2^{z}h(z) $, where $ h(z) $ is a periodic entire function with period $ 1 $.   
\end{thm}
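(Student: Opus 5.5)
We want to prove Theorem \ref{th-1.3}. The plan has three stages: use the two CM shared values to build an auxiliary function that is forced to be constant, show that the constant is $1$ (so that $\Delta_1 f\equiv f$), and then solve the resulting linear difference equation explicitly.

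\textbf{Stage 1: the two auxiliary quotients.} Since $f$ is entire of finite order, $\Delta_1 f$ is entire of finite order, and $\sigma(\Delta_1 f)\le\sigma(f)$. Because $\Delta_1 f$ and $f$ share $a$ and $b$ CM, the quotients
\begin{equation*}
\frac{\Delta_1 f-a}{f-a}=e^{P(z)},\qquad \frac{\Delta_1 f-b}{f-b}=e^{Q(z)}
\end{equation*}
are zero-free entire functions of finite order. Hence, by Hadamard factorization, $P$ and $Q$ are polynomials. Eliminating $\Delta_1 f$ gives
\begin{equation*}
f\,(e^{P}-e^{Q})=a e^{P}-b e^{Q}+b-a .
\end{equation*}

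\textbf{Stage 2: show $e^P\equiv e^Q$.} Suppose $e^{P}\not\equiv e^{Q}$. Then $f$ is a quotient of exponential polynomials. I would write $f=(ae^{P}-be^{Q}+b-a)/(e^{P}-e^{Q})$ and note that $f$ is entire, so every zero of $e^{P}-e^{Q}$ must be a zero of the numerator. At such a zero $e^{P}=e^{Q}$, and the numerator equals $(a-b)(e^{P}-1)$. So wherever $e^{P-Q}=1$ we must also have $e^{P}=1$ (unless $e^{P-Q}$ has no such points, i.e.\ $P-Q$ is constant).

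Next I would use the difference analogue of the logarithmic derivative lemma: $m\big(r,\Delta_1 f/(f-a)\big)=S(r,f)$, and similarly for $b$. Together with Nevanlinna's second main theorem applied to $f$ with the three values $a,b,\infty$, this gives
\begin{equation*}
T(r,e^{P})=m\!\left(r,\frac{\Delta_1 f-a}{f-a}\right)=S(r,f).
\end{equation*}
Thus $e^P$ and $e^Q$ are small functions of $f$. But from the displayed identity, $T(r,f)\le O\big(T(r,e^P)+T(r,e^Q)\big)$, contradicting the transcendence of $f$ unless $e^P\equiv e^Q$.

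This is the main obstacle: making the small-function estimate precise. In particular one must handle the case where one of $P,Q$ is constant, and the degree comparison $\deg(P-Q)$ versus $\deg P$. I would try first the standard Borel-type lemma on linear combinations of exponentials to rule out the degenerate cases.

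\textbf{Stage 3: the constant equals $1$.} With $e^P\equiv e^Q$, the identity gives $(a-b)(e^P-1)\equiv 0$, so $e^P\equiv 1$. Hence $\Delta_1 f\equiv f$, that is, $f(z+1)=2f(z)$. This proves the first assertion.

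\textbf{Stage 4: solve $f(z+1)=2f(z)$.} Set $h(z)=2^{-z}f(z)=e^{-z\log 2}f(z)$, which is entire. Then
\begin{equation*}
h(z+1)=2^{-z-1}f(z+1)=2^{-z}f(z)=h(z),
\end{equation*}
so $h$ is entire with period $1$ and $f(z)=2^{z}h(z)$. This part is routine. The substantive work is confined to Stage 2, where the shared-value hypothesis must be converted into the identity $e^{P}\equiv e^{Q}$.
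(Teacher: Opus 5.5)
Your Stage 2 rests on the estimate $T(r,e^{P})=m\bigl(r,\frac{\Delta_1 f-a}{f-a}\bigr)=S(r,f)$, and this does not follow from the tools you cite.

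\begin{itemize}
\item The difference lemma applied to $f-a$ gives $m\bigl(r,\frac{\Delta_1 f}{f-a}\bigr)=S(r,f)$. But $\frac{\Delta_1 f-a}{f-a}=\frac{\Delta_1 f}{f-a}-\frac{a}{f-a}$, so you only get $T(r,e^{P})\le m\bigl(r,\frac{1}{f-a}\bigr)+S(r,f)$.
\item The second main theorem does not make $m\bigl(r,\frac{1}{f-a}\bigr)$ small. For entire $f$ it gives only $m\bigl(r,\frac{1}{f-a}\bigr)+m\bigl(r,\frac{1}{f-b}\bigr)\le T(r,f)+S(r,f)$, which allows $a$ or $b$ to be deficient.
\item Your estimate is valid only for a shared value equal to $0$, and at most one of $a,b$ is zero. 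Your contradiction $T(r,f)\le O\bigl(T(r,e^{P})+T(r,e^{Q})\bigr)=S(r,f)$ needs both exponentials small.
\end{itemize}

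So the step that carries the whole theorem is unproved, and the remark about a Borel-type lemma is only a pointer. An order comparison cannot replace it either. If $e^{P}\not\equiv e^{Q}$, your identity forces $\sigma(f)=\max\{\deg P,\deg Q\}$, an integer. That is exactly why the paper's proof of Theorem \ref{th1.2}, which kills $e^{\beta}-e^{\alpha}$ by comparing orders, needs $\sigma(f)\notin\mathbb{Z}$. The paper itself gives no proof of Theorem \ref{th-1.3}; it only quotes Zhang--Liao.

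A way to repair Stage 2 is to use the function that genuinely is small:
\[
\varphi=\frac{\Delta_1 f\,(f-\Delta_1 f)}{(f-a)(f-b)} .
\]
\begin{itemize}
\item $\varphi$ is entire. By CM sharing, $f-\Delta_1 f=(f-a)-(\Delta_1 f-a)=(f-b)-(\Delta_1 f-b)$ vanishes at least to the order of every zero of $f-a$ and of $f-b$.
\item $m(r,\varphi)=O(r^{\sigma(f)-1+\varepsilon})$. Indeed $\varphi=\frac{1}{a-b}\bigl(a\frac{\Delta_1 f}{f-a}-b\frac{\Delta_1 f}{f-b}\bigr)-\frac{\Delta_1 f}{f-a}\cdot\frac{\Delta_1 f}{f-b}$, and every quotient here is controlled by the difference lemma.
\item Substituting your formulas gives $(b-a)\varphi=be^{P}-ae^{Q}-(b-a)e^{P+Q}$.
\end{itemize}

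Now suppose $e^{P}\not\equiv e^{Q}$. Then $d=\max\{\deg P,\deg Q\}\ge 1$, $\sigma(f)\le d$ and $\sigma(\varphi)<d$, and Lemma \ref{lem2.8} finishes the argument.
\begin{itemize}
\item If $\deg P=\deg Q=d$, split according to whether $\deg(P+Q)$ or $\deg(P-Q)$ drops below $d$. In each case the lemma forces $a=b$ or $(b-a)e^{Q-P}\equiv 0$, both impossible.
\item If $\deg Q<\deg P$, the lemma forces $e^{Q}\equiv b/(b-a)$. Your own Stage 2 observation then applies: at the infinitely many zeros of $e^{P}-e^{Q}$, the numerator $(a-b)(e^{P}-1)$ equals $-a$. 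So $f$ has poles unless $a=0$, and in that case $f\equiv 0$.
\item The case $\deg P<\deg Q$ is symmetric, with $e^{P}\equiv a/(a-b)$ and numerator equal to $b$.
\end{itemize}

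Stages 1, 3 and 4 are fine as written.
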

We have the following remark on {Theorem \ref{th-1.3}}.
\begin{rem}
The condition of sharing `two values' in Theorem \ref{th-1.3} cannot be relaxed to sharing `one value'. For example, consider $f(z)=Ae^{\pi iz/c}$. It is easily verified that $f$ and $\Delta_c f$ share $ 0 $ CM, yet$ f\not\equiv \Delta_c f $. Furthermore, Theorem \ref{th-1.3} is satisfied not only by functions of finite order but also by certain functions of infinite order.\vspace{1.2mm}

\noindent{\bf A.} Let $ f(z)=2^{z}e^{c\cos(2\pi z)+d} $, where $c \neq 0$ and $d \in \mathbb{C}$. It follows that $\Delta_1 f$ and $f$ share any two distinct values $a, b$ CM, and we have $\Delta_1 f \equiv f$. Note that $f$ is of the form $ f(z)=2^{z}h(z) $, where $h(z)=e^{c\cos(2\pi z)+d}$ is a periodic entire function with period $1$.\vspace{1.2mm}

\noindent{\bf B.} Consider the function$$f(z)=2^{z}\frac{e^{2\pi i z}-1}{e^{c\cos(2\pi z)+d}},$$where $c \neq 0$ and $d \in \mathbb{C}$. It is easily verified that $\Delta_1 f$ and $f$ share any two distinct values $a, b$ CM, and that $\Delta_1 f \equiv f$. Note that $f$ can be written in the form $f(z)=2^{z}h(z)$, where$$h(z)=\frac{e^{2\pi i z}-1}{e^{c\cos(2\pi z)+d}}$$is a periodic entire function with period $1$.
\end{rem}
To continue the research, it will be interesting to answer the following question.
\begin{ques}\label{q1.1}
Can Theorem \ref{th-1.3} be generalized to transcendental meromorphic functions for more general difference operators?
\end{ques}
{The primary objective of this paper is to explore the structural properties of solutions arising from the value-sharing conditions described above. In Section 2, we provide an affirmative answer to Question 1.1 by proving several theorems that characterize the precise forms of such meromorphic functions. Specifically, Theorem \ref{th1.2} establishes the general form for transcendental meromorphic functions, while Theorems \ref{th2.2} and \ref{th1.05} refine these results for functions with \textit{Borel} and \textit{Nevanlinna exceptional values}. Through these results, we demonstrate that the relationship $L_c^n(f) \equiv Af$ leads to a specific class of periodic structures that have not been fully identified in existing literature.}\vspace{1.2mm}
\begin{rem}
	{ The answer to the Question \ref{q1.1} may be affirmative and this can be seen from the following examples.}
\end{rem}
\begin{exm}\label{Exm-1.3}
For the functions $$ f_1(z)=\displaystyle \frac{\exp\left(\frac{z \ln 2}{c}\right)\sin\left({2\pi z}/{c}\right)}{\cos \left({2\pi z}/{c}\right)-1}\; \mbox{and}\;
f_2(z)=\displaystyle \frac{\exp\left(\frac{z \ln 2}{c}\right)}{\exp\left({2\pi iz}/{c}\right)-1}, $$
where $ c\in\mathbb{C^{*}}:=\mathbb{C}\setminus\{0\} $, it is easy to verify that for $ n\in\mathbb{N} $, $ \Delta^n_{c}f_j\;(j=1, 2) $ and $ f_j $ share the values $ a $, $ b $ $ {CM} $ and $ \Delta^n_{c}f_j\equiv f_j $. More precisely, the basic fact is that if we consider the function as 
\begin{align*}
	f(z)=\exp\left(\frac{z \ln 2}{c}\right)g(z),
\end{align*} where $ g $ is a meromorphic function with $ g(z+c)=g(z) $, then it can be shown always that $ \Delta^n_{c}f\equiv f $.
\end{exm}
\begin{note}
In case of considering of meromorphic functions, the following observations can be made.\vspace{1.2mm}

\noindent{\bf a).} If $ f $ is meromorphic solution of the difference equations $ \Delta_{c}f\equiv f $, then $ f $ and $ \Delta_{c}f $ necessarily share their poles $ \infty $ $ {CM} $.
\vspace{1.2mm}

\noindent{\bf b).} The general solution of the difference equation  $ f(z+c)=2f(z) $ appeared in the conclusion of a result in \cite{Che & Yi-RM-2013} would be of the form 
\begin{align*}
	f(z)=\exp\left(\frac{z \ln 2}{c}\right)\pi(z),
\end{align*} where $ \pi(z) $ is a $ c $-periodic meromorphic function.
\end{note}
In this paper, we aim to find the general meromorphic solutions when meromorphic functions $ f $ and their higher difference operators share values. Henceforward, we define the linear difference polynomial of meromorphic functions $ f $ as (see \cite{Ahamed-IJPAM-2022,Ban & Aha & JCMA & 2020})
\begin{align}\label{e-22.3}
	L_{c}^{n}(f) = \sum_{j=0}^{n} a_j f(z+jc),
\end{align}
where $ a_n(\neq 0), \ldots, a_1, a_0\in\mathbb{C} $. It follows that that if $ a_j=\binom{n}{j}(-1)^{n-j} $, then $ L^n_c(f)=\Delta^n_cf $. Clearly, $ L^n_c(f) $ is a general setting of the difference operator $ \Delta^n_cf $.\vspace{1.2mm}
We define the class $ \mathcal{M}_c $ by 
\begin{align*}
	\mathcal{M}_c:=\{f : f\; \mbox{is meromorphic in}\; \mathbb{C}\;\mbox{and}\; f(z+c)=f(z)\}.
\end{align*}
It can be shown that $ \mathcal{M}_c $ is a field of meromorphic functions of period $ c $. However, a little is know about the precise form of the meromorphic solutions of difference equations when meromorphic functions and their difference polynomials share values. \vspace{1.2mm}

{ The formulation \eqref{e-22.3} significantly generalizes the classical $n$-th order forward difference operator $\Delta_c^n f$, which is recovered when the coefficients $a_j$ are chosen as binomial coefficients $(-1)^{n-j} \binom{n}{j}$. While much of the existing literature (see e.g., \cite{Mallick-AAMP-2022,Che & Yi-RM-2013,Hal & Kor - JPA-2007}) focuses on the specific properties of $\Delta_c^n f$, our results for $L_c^n(f)$ provide a broader framework that encompasses a wider class of functional transformations. By moving from this specific case to a general linear operator with arbitrary constant coefficients, we provide a more robust characterization of the value distribution and the structural form of meromorphic solutions.}
\section{\bf Main results}
For a meromorphic function $ f $, considering a general setting $ L_{\eta}(f):=a_1f(z+c)+a_0f(z) $ of the difference operator $ \Delta_{\eta}f $, Ahamed \cite{Aha & RM & 2019} has improved the result of Chen and Yi \cite{Che & Yi-RM-2013} in view of sharing values. We define linear differential polynomial of $ f $ as 
\begin{align*}
	L_k[f]=b_kf^{(k)}+\cdots+b_1f^{\prime}+b_0, \mbox{where}\; b_k(\neq 0), \ldots, b_1, b_0\in\mathbb{C}.
\end{align*}
As an application of logarithmic derivative lemma and its difference analogue results, uniqueness results of certain difference-differential polynomials of meromorphic functions can be studied. We prove the following uniqueness result.
\begin{thm}\label{th1.2}
Let $ f $ be a transcendental meromorphic function such that its order of growth $ \sigma(f) $ is not an integer or infinite, and let $ \eta\in\mathbb{C} $ be a constant such that $ f(z+c)\not\equiv f(z) $. If ${L}^n_{c}(f)+L_k[f] $ and $ f $ share three distinct values $ a, b, \infty $ $ CM $, then $ {L}^n_{c}(f)+L_k[f] \equiv f $.
\end{thm}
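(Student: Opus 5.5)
Set $g:=L^n_c(f)+L_k[f]$. The plan follows the Chen--Yi argument behind Theorem \ref{th-1.2}: the three CM shared values $a,b,\infty$ force a very rigid relation between $f$ and $g$, and the non-integer order kills the exponential factor that relation involves. First, $g$ is meromorphic with $\sigma(g)\le\sigma(f)<\infty$, since shifts and derivatives do not raise the order. Because $g$ and $f$ share $a$ and $\infty$ CM, the quotient
\begin{align*}
	\frac{g-a}{f-a}
\end{align*}
has neither zeros nor poles. Hence $\dfrac{g-a}{f-a}=e^{\alpha}$ and likewise $\dfrac{g-b}{f-b}=e^{\beta}$. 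By Hadamard's factorization and the finite order, $\alpha,\beta$ are polynomials with $\deg\alpha,\deg\beta\le\sigma(f)$. Since $\sigma(f)\notin\mathbb{Z}$, this gives $\deg\alpha,\deg\beta\le[\sigma(f)]<\sigma(f)$.

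Next I eliminate $g$. Subtracting the two representations gives
\begin{align*}
	(e^{\alpha}-e^{\beta})f=ae^{\alpha}-be^{\beta}+b-a.
\end{align*}
If $e^{\alpha}\not\equiv e^{\beta}$, this yields
\begin{align*}
	f=\frac{ae^{\alpha}-be^{\beta}+b-a}{e^{\alpha}-e^{\beta}}.
\end{align*}
Then $f$ is a rational expression in exponentials of polynomials, and $T(r,f)\le 2T(r,e^{\alpha})+2T(r,e^{\beta})+O(1)$, so $\sigma(f)\le\max\{\deg\alpha,\deg\beta\}$. I expect $\sigma(f)$ to be exactly an integer here: either the maximal degree, or $0$ if $f$ is rational. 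If $f$ is rational this contradicts transcendence, and if the order equals an integer it contradicts the hypothesis that $\sigma(f)$ is not an integer. The main point to verify is that this degenerate quotient really has integer order. That follows because an expression in $e^{\alpha}$, $e^{\beta}$ with polynomial exponents is a quotient of exponential polynomials, and exponential polynomials have order equal to the maximal degree of their exponents (or they reduce to constants or polynomials). So $e^{\alpha}\equiv e^{\beta}$.

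Then $(a-b)e^{\alpha}=a-b$, and since $a\ne b$ we get $e^{\alpha}\equiv1$, i.e.\ $g\equiv f$, which is the claim $L^n_c(f)+L_k[f]\equiv f$. I should handle the case where $a$ or $b$ is $0$; it causes no change. The hypothesis $f(z+c)\not\equiv f(z)$ only serves, as in Theorem \ref{th-1.2}, to rule out degenerate operators (for example $g$ constant). I would remark that if $g$ were constant, the CM sharing of $a$ and $b$ would force $f$ to omit $a$ and $b$ except where it equals the constant value of $g$. Combined with sharing $\infty$, Picard/Nevanlinna gives a contradiction to the order hypothesis. The main obstacle is the order computation in the second step, which I would settle by citing the standard fact that $\sigma(e^{P})=\deg P$ together with the Borel-type lemma on exponential polynomials.
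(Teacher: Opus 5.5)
Your proposal is correct and follows essentially the paper's proof. It uses the same representations $(g-a)/(f-a)=e^{\alpha}$ and $(g-b)/(f-b)=e^{\beta}$ with $\deg\alpha,\deg\beta<\sigma(f)$, subtracts to get $(e^{\alpha}-e^{\beta})f=ae^{\alpha}-be^{\beta}+b-a$, and uses an order comparison to force $e^{\alpha}\equiv e^{\beta}\equiv 1$. Your detour claiming the quotient has exactly integer order is unnecessary: the bound $\sigma(f)\le\max\{\deg\alpha,\deg\beta\}<\sigma(f)$, which you already derived, is itself the contradiction, and it is precisely the paper's comparison of the orders of the two sides.
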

We have the following immediate corollaries from Theorem \ref{th1.2}.
\begin{cor}
Let $ f $ be a transcendental meromorphic function such that its order of growth $ \sigma(f) $ is not an integer or infinite, and let $ \eta\in\mathbb{C} $ be a constant such that $ f(z+c)\not\equiv f(z) $. If ${L}^n_{c}(f) $ and $ f $ share three distinct values $ a, b, \infty $ $ CM $, then $ {L}^n_{c}(f)\equiv f $. Furthermore, \vspace{1.2mm}

\noindent{\bf A.} $ 	f(z)=\displaystyle\rho_1^{z/c}\pi_1(z)+\rho_2^{z/c}\pi_2(z)+\cdots+\rho_n^{z/c}\pi_n(z), $
where $ \pi_j(z)\in\mathcal{M}_c\; (j=1, 2, \ldots, n) $ and $ \rho_j $ $ (j=1, 2, \ldots, n) $ are distinct roots of the equation $ a_nw^n+\cdots+a_1w+a_0-1=0. $\vspace{1.2mm}
In particular, if $ \rho_j\in \{0, 1\} $  be such that at least one of $ \rho_j $'s are non-zero, then $ f\in\mathcal{M}_c $.\vspace{1.2mm}

\noindent{\bf B.} \begin{align*}
	f(z)=\left(\sum_{m_1=1}^{N_1-1}z^{m_1}\pi_{m_1}(z)\right)\sigma_1^{z/c}+\cdots+\left(\sum_{m_q=1}^{N_q-1}z^{m_q}\pi_{m_q}(z)\right)\sigma_q^{z/c},
\end{align*}
where $ \pi_j\in\mathcal{M}_c $ and $ \sigma_1, \ldots $, $ \sigma_q $ are multiple roots, of respective multiplicities $ N_1, \ldots, N_q $, of the equation $ a_nw^n+\cdots+a_1w+a_0-1=0. $
\end{cor}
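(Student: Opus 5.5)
The corollary has two parts. The identity ${L}^n_{c}(f)\equiv f$ is the special case of Theorem \ref{th1.2} with $b_k=\cdots=b_1=b_0=0$. Formally $L_k[f]$ requires $b_k\neq 0$, so I will check that the proof of Theorem \ref{th1.2} never uses $b_k\neq0$. The argument there only needs $F:=L^n_c(f)+L_k[f]$ to be a linear combination of shifts and derivatives of $f$ whose growth is controlled. When $L_k\equiv 0$ this control is easier, since only the difference analogue of the logarithmic derivative lemma is needed. So the first conclusion follows at once.

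The real content is the explicit description of the solutions of the linear constant-coefficient difference equation
\begin{equation*}
a_nf(z+nc)+\cdots+a_1f(z+c)+(a_0-1)f(z)=0.
\end{equation*}
I will treat it like a linear recurrence whose "constants" lie in the field $\mathcal{M}_c$. Let $P(w)=a_nw^n+\cdots+a_1w+a_0-1$ and let $E$ be the shift $Ef(z)=f(z+c)$. The equation reads $P(E)f=0$.

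First, for a root $\rho\neq0$, fix a branch $\rho^{z/c}=e^{z\log\rho/c}$. Then $E(\rho^{z/c}g)=\rho\,\rho^{z/c}Eg$, so
\begin{equation*}
P(E)(\rho^{z/c}g)=\rho^{z/c}P(\rho E)g.
\end{equation*}
Factor $P(w)=a_n\prod_i(w-\sigma_i)^{N_i}$. By induction, $(E-\sigma)^N(\sigma^{z/c}h)=0$ exactly when $(E-1)^Nh=0$, that is, $\Delta_c^Nh=0$. Solving $\Delta_c h=\pi\in\mathcal{M}_c$ with $h=(z/c)\pi$ shows that the solutions of $\Delta_c^Nh=0$ are precisely $\sum_{m=0}^{N-1}z^m\pi_m(z)$ with $\pi_m\in\mathcal{M}_c$. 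This yields the functions listed in \textbf{A} (all roots simple) and in \textbf{B} (the multiple roots). Note that the index in the statement should really start at $m=0$.

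Second, I must show every solution has this form, which is the step I expect to be the main obstacle. The solution space over $\mathcal{M}_c$ should have dimension $n$: a solution is determined by its restriction to a strip, and the functions $z^m\sigma_i^{z/c}$ are linearly independent over $\mathcal{M}_c$ by a Casoratian argument. To make this concrete, I would use the factorization $P(E)=a_n\prod_i(E-\sigma_i)^{N_i}$, with commuting factors, and the coprimality of these factors. A B\'ezout identity over $\mathbb{C}[w]$ gives the direct-sum decomposition $\ker P(E)=\bigoplus_i\ker(E-\sigma_i)^{N_i}$. This reduces the problem to the one-root kernels already computed.

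A root $\rho=0$ occurs only if $a_0=1$. I would then discard it: since the operator $E$ is injective on meromorphic functions, the zero root is removed by factoring out $E$. The special case in \textbf{A} follows: if $a_0=1$ and the only other root is $\rho=1$ (simple), then $f=1^{z/c}\pi_1=\pi_1\in\mathcal{M}_c$. Finally, I would note that the hypothesis $f(z+c)\not\equiv f(z)$ restricts which coefficients $\pi_j$ can all vanish, but this does not affect the form.
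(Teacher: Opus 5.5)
Your proposal is correct. The first part matches the paper, which presents the corollary as an immediate consequence of Theorem~\ref{th1.2} with $L_k[f]\equiv 0$. You are right that that proof uses nothing about $L^n_c(f)+L_k[f]$ beyond its finite order ($\le\sigma(f)$) and the three CM shared values, so the requirement $b_k\neq 0$ plays no role.

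The second part is where you diverge. The paper gives no derivation of the forms \textbf{A} and \textbf{B}; it imports them verbatim from Lemma~\ref{lem-3.7}, quoted from \cite{Ahamed-IJPAM-2022}. You instead prove them directly with the shift-operator calculus:
\begin{itemize}
\item the conjugation $P(E)(\rho^{z/c}g)=\rho^{z/c}P(\rho E)g$;
\item the description $\ker\Delta_c^N=\{\sum_{m=0}^{N-1}z^m\pi_m:\pi_m\in\mathcal{M}_c\}$, by induction using the triangularity of $\Delta_c z^{m+1}$;
\item the primary decomposition $\ker P(E)=\bigoplus_i\ker(E-\sigma_i)^{N_i}$ from a B\'ezout identity in $\mathbb{C}[w]$;
\item removal of the root $0$ (present iff $a_0=1$) by injectivity of $E$.
\end{itemize}

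The citation is shorter. Your argument is self-contained and yields the correct unified form $f=\sum_i\sigma_i^{z/c}\sum_{m=0}^{N_i-1}z^m\pi_{i,m}(z)$. That form shows that \textbf{B} as printed omits the $m=0$ terms and does not literally cover a mixture of simple and multiple roots.

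Two small remarks. First, only the inclusion `every solution has this form' is asserted. Your dimension count and Casoratian independence are therefore unnecessary: the B\'ezout decomposition already gives that inclusion without any finite-dimensionality. Second, when all roots lie in $\{0,1\}$, the conclusion $f\in\mathcal{M}_c$ contradicts the hypothesis $f(z+c)\not\equiv f(z)$. That special case of \textbf{A} is thus vacuous rather than merely a restriction on which $\pi_j$ may vanish.
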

\begin{cor}
Let $ f $ be a transcendental meromorphic function such that its order of growth $ \sigma(f) $ is not an integer or infinite. If $L_k[f] $ and $ f $ share three distinct values $ a, b, \infty $ $ CM $, then $ L_k[f]\equiv f $. Furthermore, the function $ f $ takes the form $ f(z)=c_1e^{\lambda_1 z}+\cdots+c_ne^{\lambda_n z}, $ where $ \lambda_1, \ldots, \lambda_n $ are distict roots of the equation $ b_kw^k+\cdots+b_1w+b_0=0 $.
\end{cor}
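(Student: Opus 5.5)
We will write $g:=L_k[f]$ and follow the same route as for Theorem \ref{th1.2}, with the difference part simply absent. The finite-order hypothesis will turn the CM sharing into an exact algebraic relation between $f$ and two exponentials of polynomials. The non-integer-order hypothesis will then force $g\equiv f$. Finally we solve the resulting linear differential equation with constant coefficients.

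\textbf{Step 1 (exponential representation).} Since $\sigma(f)<\infty$ and $f$ has only finitely many poles in any disc, $g=L_k[f]$ is meromorphic with $\sigma(g)\le\sigma(f)$. Because $g$ and $f$ share $a,b,\infty$ CM, the quotients
\[
\frac{g-a}{f-a}\quad\text{and}\quad\frac{g-b}{f-b}
\]
are entire and zero-free, of order at most $\sigma(f)$. By Hadamard's factorization we can therefore write
\[
g-a=e^{P}(f-a),\qquad g-b=e^{Q}(f-b),
\]
with polynomials $P,Q$ satisfying $\deg P,\deg Q\le\sigma(f)$.

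\textbf{Step 2 (forcing $g\equiv f$).} Subtracting the two identities gives
\[
f\,(e^{P}-e^{Q})=a e^{P}-b e^{Q}+b-a .
\]
We split into two cases.

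\emph{Case $e^{P}\equiv e^{Q}$.} The identity reduces to $(b-a)(1-e^{P})\equiv 0$. Hence $e^{P}\equiv 1$, and so $g\equiv f$.

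\emph{Case $e^{P}\not\equiv e^{Q}$.} Then
\[
f=\frac{a e^{P}-b e^{Q}+b-a}{e^{P}-e^{Q}},
\]
so $f$ is a quotient of exponential polynomials whose exponents are polynomials. We will show that such a quotient, when transcendental, has integer order. For this we invoke the standard Nevanlinna estimate for exponential polynomials: $T(r,\sum c_j e^{R_j})=C\,r^{d}(1+o(1))$, where $d$ is the maximal degree occurring among the $R_j-R_i$, together with Borel's lemma to rule out cancellation between numerator and denominator beyond a term of the same order. This gives $\sigma(f)=d\in\mathbb{N}\cup\{0\}$, contradicting the hypothesis that $\sigma(f)$ is not an integer. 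I expect this step to be the main obstacle: one must make sure that common factors of numerator and denominator cannot lower the order to a non-integer value. My first attempt would be to reduce to the case where numerator and denominator, regarded as polynomials in the exponentials $e^{P},e^{Q}$, are coprime, and then apply the Steinmetz/Borel-type growth estimate to each of them separately.

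\textbf{Step 3 (solving the equation).} From $g\equiv f$ we obtain a linear differential equation with constant coefficients, as in the statement:
\[
b_kf^{(k)}+\cdots+b_1f'+b_0\equiv f .
\]
Its general solution is obtained from the roots of the characteristic polynomial stated in the corollary. When these roots $\lambda_1,\dots,\lambda_n$ are distinct, the solution is $f=\sum c_je^{\lambda_jz}$, where a constant particular solution is absorbed if the equation is inhomogeneous. Such an $f$ has order $1$ (or $0$), which is an integer. So the representation is consistent only formally, and combined with the order hypothesis it in fact shows that the situation of the corollary is very restrictive, essentially vacuous. We would record this remark after the proof.
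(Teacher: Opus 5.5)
Your Steps 1 and 2 follow the paper's proof of Theorem \ref{th1.2}; the paper obtains this corollary from that proof with the difference part dropped. However, the case $e^{P}\not\equiv e^{Q}$ is left unproved, and that is exactly where the non-integer order hypothesis has to enter. You announce that a transcendental quotient of exponential polynomials has integer order, gesture at Steinmetz/Borel-type estimates, and flag possible cancellation between numerator and denominator as an open obstacle. As written, this is a gap at the crux of the argument. It is also a detour, because you never need the exact order of the quotient, only an upper bound.

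The missing idea is to sharpen Step 1. Since $\deg P$ and $\deg Q$ are integers not exceeding $\sigma(f)\notin\mathbb{Z}$, you get $d:=\max\{\deg P,\deg Q\}<\sigma(f)$. Then
\[
f=\frac{ae^{P}-be^{Q}+b-a}{e^{P}-e^{Q}}
\]
is a quotient $N/D$ of two entire functions of order at most $d$. The estimate $T(r,N/D)\le T(r,N)+T(r,D)+O(1)$ gives $\sigma(f)\le d<\sigma(f)$, a contradiction. Common zeros of $N$ and $D$ can only decrease $T(r,f)$, so the cancellation issue you worry about does not arise. The paper phrases the same point as an order comparison in $(e^{Q}-e^{P})f=ae^{P}-be^{Q}+b-a$. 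The left side has order exactly $\sigma(f)$, because the factor $e^{Q}-e^{P}$ has order $<\sigma(f)$. The right side has order at most $d<\sigma(f)$.

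Your closing remark in Step 3 is correct and worth keeping; the paper does not make it. Once $L_k[f]\equiv f$, $f$ solves a linear differential equation with constant coefficients. A transcendental solution is therefore an exponential polynomial of order exactly $1$, which contradicts $\sigma(f)\notin\mathbb{Z}$. Hence no $f$ satisfies the hypotheses, and the corollary, including its ``furthermore'' clause, holds only vacuously. That vacuity is the only legitimate way to dispose of the clause.

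So do not claim that the solutions are governed by the characteristic polynomial stated in the corollary, because that polynomial is wrong. If $b_0$ multiplies $f$, the relevant polynomial is $b_kw^k+\cdots+b_1w+b_0-1$. Under the paper's literal definition of $L_k$, where $b_0$ is a constant term, it is $b_kw^k+\cdots+b_1w-1$, together with the constant particular solution $b_0$. In neither case is it $b_kw^k+\cdots+b_1w+b_0$.
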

We obtain the next result in case of when the function $ f $ has a `Nevanlinna exceptional value'.
\begin{thm}\label{th2.2}
	{ Let $f$ be a finite order transcendental entire function with a finite Nevanlinna exceptional value $\zeta$, and let $\eta \in \mathbb{C}$ such that $f(z+\eta) \not\equiv f(z)$. If $L_c^n(f)$ and $f$ share the value $a$ CM, then $a = 0$ and $L_c^n(f) \equiv Bf$ for some non-zero constant $B$. Furthermore, the function $f(z)$ must belong to one of the following structural classes based on the roots of the characteristic equation $P(w) = \sum_{j=0}^n a_j w^j - B = 0$.\vspace{1.2mm}
		
		\noindent{Class I:} {Distinct Roots (The linear combination class).} If the characteristic equation has $n$ distinct roots $\rho_1, \rho_2, \ldots, \rho_n$, then $f(z)$ is a combination of quasi-periodic functions:$$f(z) = \sum_{j=1}^{n} \rho_j^{z/c} \pi_j(z)$$where each $\pi_j(z) \in \mathcal{M}_c$ is a periodic meromorphic function with period $c$.\vspace{1.2mm}
		
		\noindent Note: If any $\rho_j \in \{0, 1\}$ (with at least one non-zero), then $f(z)$ reduces to a periodic function in $\mathcal{M}_c$.\vspace{2mm}
		
		\noindent{Class II: Multiple roots (The polynomial-quasi-periodic class.)} If the characteristic equation has multiple roots $\sigma_1, \sigma_2, \ldots, \sigma_q$ with multiplicities $N_1, N_2, \ldots, N_q$ respectively, then $f(z)$ takes the expanded form:$$f(z) = \sum_{k=1}^{q} \left( \sum_{m_k=1}^{N_k-1} z^{m_k} \pi_{m_k}(z) \right) \sigma_k^{z/c}$$where $\pi_j \in \mathcal{M}_c$.}
\end{thm}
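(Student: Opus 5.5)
The plan follows the Chen--Yi argument for Theorem \ref{th-1.1}, with $\Delta_c f$ replaced by $L_c^n(f)$. The proof has three steps: obtain the identity $L_c^n(f)\equiv Bf$, show $a=0$, and then solve the linear difference equation.

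\textbf{Step 1: the identity.} Since $f$ is entire of finite order $\sigma(f)$, so is $L_c^n(f)$. Because $L_c^n(f)$ and $f$ share $a$ CM, the quotient
\[
\frac{L_c^n(f)-a}{f-a}=e^{Q(z)}
\]
has no zeros or poles, with $Q$ a polynomial of degree at most $\sigma(f)$. Since $\zeta$ is a Nevanlinna exceptional value, $\delta(\zeta,f)>0$ and $m\bigl(r,1/(f-\zeta)\bigr)\ge \delta T(r,f)$ on a set of $r$. I would rewrite
\[
L_c^n(f)-a=\sum_j a_j\bigl(f(z+jc)-\zeta\bigr)+\zeta\sum_j a_j-a .
\]
By the difference analogue of the logarithmic derivative lemma (Chiang--Feng, Halburd--Korhonen), each $m\bigl(r,(f(z+jc)-\zeta)/(f-\zeta)\bigr)$ is $S(r,f)$. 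Hence
\[
e^{Q}=\frac{\sum_j a_j (f(z+jc)-\zeta)}{f-\zeta}\cdot\frac{f-\zeta}{f-a}+\frac{\zeta\sum_j a_j-a}{f-a}.
\]
I would compare proximity functions along the set where $f$ is close to $\zeta$ to get $T(r,e^Q)=S(r,f)$. The aim is then to show that $Q$ is constant: if $\deg Q\ge1$, comparing growth of the two sides on the exceptional set should give a contradiction.

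This is where the hypothesis enters. Chen--Yi use a Borel exceptional value, and a Nevanlinna exceptional value gives weaker control, so this is the main obstacle. I would first try to argue via the Nevanlinna deficiency sum. Writing $F=L_c^n(f)$ and applying the second main theorem to $f$ with the values $a$ and $\zeta$, the relation $F-a=e^{Q}(f-a)$ forces $e^Q$ to be a small function relative to $f$. From there I would derive that $Q$ is constant.

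\textbf{Step 2: $a=0$.} With $e^Q\equiv B$, the identity reads $L_c^n(f)-a=B(f-a)$. This gives $L_c^n(f)-Bf=a(1-B)$.
\begin{itemize}
\item If $a\ne0$ and $B\ne1$, then $f$ satisfies an inhomogeneous linear difference equation with constant right-hand side. I would subtract a particular constant solution and show that $\zeta$ cannot then be Nevanlinna exceptional for the homogeneous part unless $a=0$. This mirrors the Chen--Yi argument, with $m(r,1/(f-\zeta))\ge\delta T(r,f)$ replacing $\lambda(f-\zeta)<\sigma(f)$.
\item If $B=1$, the claimed conclusion $L_c^n(f)\equiv Bf$ already holds.
\end{itemize}
In either case $B\ne0$, since $B=e^Q$.

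\textbf{Step 3: the structure.} Once $L_c^n(f)\equiv Bf$ is established, the classification is the standard theory of linear constant-coefficient difference equations.
\begin{itemize}
\item For each root $\rho$ of $P(w)=\sum_j a_j w^j-B$, substituting $\rho^{z/c}\pi(z)$ with $\pi\in\mathcal{M}_c$ gives $P(\rho)\rho^{z/c}\pi=0$, so these are solutions.
\item For a root of multiplicity $N$, the functions $z^m\rho^{z/c}\pi(z)$ with $m<N$ are also solutions, because $\sum_j a_j j^m\rho^j=0$ for $m<N$.
\item Conversely, I would reduce the order-$n$ equation to a first-order system $Y(z+c)=CY(z)$ over the field $\mathcal{M}_c$, with $C$ the companion matrix of $P$.
\end{itemize}
Diagonalizing or Jordanizing $C$ shows that the solution space over $\mathcal{M}_c$ is spanned by these functions. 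This yields Class I when the roots are distinct and Class II when there are multiple roots. If some $\rho_j=1$, the corresponding term is simply a $c$-periodic function, which gives the Note.
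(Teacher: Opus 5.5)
Your Steps~1 and~2 have genuine gaps, and they are exactly where the substance of the theorem lies. In Step~1 the displayed decomposition does not give $T(r,e^{Q})=S(r,f)$. Besides the difference quotients, it contains $\frac{f-\zeta}{f-a}$ and $\frac{\zeta\sum_j a_j-a}{f-a}$, whose proximity functions are bounded only by $m\bigl(r,\frac{1}{f-a}\bigr)+O(1)$. When $a=\zeta$, the second term is a constant multiple of $\frac{1}{f-\zeta}$, whose proximity function is large by hypothesis. More seriously, even a small $e^{Q}$ only bounds $\deg Q$; it does not make $Q$ constant, and the appeal to the second main theorem gives no mechanism for constancy.

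No such mechanism exists without further hypotheses. Take $f(z)=e^{z^{2}/2}$, $c=1$, $L_1^1(f)=f(z+1)$ (so $a_1=1$, $a_0=0$). Then $L_1^1(f)=e^{z+1/2}f$, so $0$ is shared CM and is a Nevanlinna (even Borel) exceptional value, and $T(r,e^{Q})=S(r,f)$. Yet $Q(z)=z+\frac12$ is non-constant and $L_1^1(f)\not\equiv Bf$.

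The device that can turn $Q'\not\equiv0$ into a contradiction with $\delta(\zeta,f)>0$ is the one in the paper's proof of Theorem~\ref{th1.05}. From $Q'=\frac{(L_c^nf)'}{L_c^nf-a}-\frac{f'}{f-a}$ one writes $Q'=(f-\zeta)\Phi$. Here $\Phi$ is built from $\frac{L_c^nf}{f-\zeta}$, $\frac{(L_c^nf)'}{f-\zeta}$, $\frac{(L_c^nf)'}{L_c^nf-a}$ and logarithmic derivatives of $f$, so $m(r,\Phi)=S(r,f)$, and hence $m\bigl(r,\frac{1}{f-\zeta}\bigr)=S(r,f)$. This works only under extra hypotheses such as $a\notin\{0,\zeta\}$ and $\zeta\sum_j a_j=0$; otherwise $m\bigl(r,\frac{L_c^nf}{f-\zeta}\bigr)$ is not small.

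Step~2 is not merely incomplete but false. Take $f(z)=e^{z}+1$, $c=1$, $L_1^1(f)=2f(z+1)-f(z)=(2e-1)e^{z}+1$. Then $L_1^1(f)-1=(2e-1)(f-1)$ with both sides zero-free, so $a=1$ is shared CM. Also $\delta(1,f)=1$ and $B=2e-1\neq1$, yet $a\neq0$ and $L_1^1(f)\not\equiv Bf$. In your case $B=1$ you never prove $a=0$, and it fails there too: $f=e^{z}$ with $L_1^1(f)=\frac{1}{e-1}\bigl(f(z+1)-f(z)\bigr)$ gives $L_1^1(f)\equiv f$, which shares every value.

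So the statement as written is false, and no completion of your outline can prove it. One needs hypotheses of the kind in Theorem~\ref{th-1.1}: the shared value is the exceptional value, and the operator is restricted so as to exclude cases like $\sum_j a_j=1$ and single-term operators like $f(z+1)$.

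The paper's own proof takes a different route. It writes $f=He^{h}+a$ by Hadamard factorization, i.e.\ it treats the shared value as a \emph{Borel} exceptional value, which a Nevanlinna exceptional value need not be. It then rules out non-constant $p$ by order comparisons and Lemma~\ref{lem2.8}, and obtains $a=0$ from \eqref{e3.14}. It has the same holes as your outline:
the right-hand side $a(1-\sum_j a_j)e^{-h}$ of \eqref{e3.14} vanishes when $\sum_j a_j=1$; and the Case~1 claim that $0$ is not a Borel exceptional value of the left side of \eqref{e3.5} fails for $f=e^{z^{2}/2}$, $L_1^1(f)=f(z+1)$.

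Your Step~3 (companion matrix over the field $\mathcal{M}_c$) is sound and more self-contained than the paper's appeal to Lemma~\ref{lem-3.7}. There is one slip: $\sum_j a_j\rho^{j}=B$, not $0$; the vanishing of $\sum_j a_j j^m\rho^j$ holds only for $1\le m<N$. Note also that your argument yields $z^{m}\rho^{z/c}\pi(z)$ for all $0\le m<N$. That is the correct general solution, but it is not literally the displayed Class~II formula. Finally, a root $\rho_j=1$ makes only one summand periodic, not $f$ itself.
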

We obtain the following corollary as a consequence of Theorem \ref{th2.2}
\begin{cor}\label{cor-2.2}
Let $ f $ be a finite order transcendental entire function which has a finite Nevanlinna exceptional value $ \zeta $ and $ \eta\in\mathbb{C} $ be a constant such that $ f(z+\eta)\not\equiv f(z) $. If $ {L}^2_{c}(f) $ and $ f $ share $ a $ $ CM $, then $ 	a=0\;\;\text{and}\;\; {L}_{c}^2(f)\equiv {B}f, $
	 where $ {B} $ is a non-zero constant. Furthermore, the function assume one of the following forms:
	\begin{enumerate}
		\item[(i)] If $ a_1^2+4a_2{B}\neq 4a_2a_0 $, then  $ f(z) $ must of the form 
		\begin{equation*}
			f(z)=[{R}_1({B})]^{z/c}\pi_1(z)+\zeta \;\; \text{or} \;\; f(z)=[{R}_2({B})]^{z/c}\pi_2(z)+\zeta,
		\end{equation*}
		 where  $ \pi_j(z)\in \mathcal{M}_c\;(j=1, 2) $ with $ \max\{\sigma(\pi_1),\;\sigma(\pi_2)\}\leq \sigma(f).  $
		\item[(ii)] If $ a_1^2+4a_2{B}= 4a_2a_0 $, then $ f(z)=\left(-\frac{a_1}{2a_2}\right)^{z/c}\pi(z)+\zeta, $ where $ \pi(z)\in\mathcal{M}_c $ with $ \sigma(\pi)\leq \sigma(f) $.
	\end{enumerate}
\end{cor}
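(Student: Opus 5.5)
The plan is to reduce Corollary \ref{cor-2.2} to Theorem \ref{th2.2} with $n=2$, and then pin down the admissible forms using the exceptional value $\zeta$. Applying Theorem \ref{th2.2} to $L^2_c(f)=a_2f(z+2c)+a_1f(z+c)+a_0f(z)$ gives at once $a=0$ and $L^2_c(f)\equiv Bf$ with $B\neq 0$. The characteristic equation is
$$P(w)=a_2w^2+a_1w+a_0-B=0,$$
with discriminant $a_1^2-4a_2(a_0-B)=a_1^2+4a_2B-4a_2a_0$. Cases (i) and (ii) are exactly the cases where this discriminant is nonzero or zero. I write $R_{1,2}(B)=\bigl(-a_1\pm\sqrt{a_1^2+4a_2B-4a_2a_0}\bigr)/(2a_2)$ for the two roots.

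In case (i), Class I of Theorem \ref{th2.2} gives
$$f=R_1^{z/c}\pi_1+R_2^{z/c}\pi_2,\qquad \pi_j\in\mathcal{M}_c.$$
Here I would bring in $\zeta$. Since $f$ is entire of finite order, $\delta(\zeta,f)>0$. Write $f-\zeta=e^{Q}h$ in Hadamard form, with $Q$ a polynomial and $h$ the canonical product over the zeros of $f-\zeta$. The quantity $L^2_c(f)-Bf$ applied to $\zeta$ gives $(a_2+a_1+a_0-B)\zeta$. Plugging $f=\zeta+g$ into $L^2_c(g)=Bg+(B-a_0-a_1-a_2)\zeta$ shows that $g$ solves an inhomogeneous equation. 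The claim to establish is that one of the two exponential components must be absent, so that $f-\zeta$ is a single term $R_j^{z/c}\pi_j$.

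To prove this, I would argue as in the Chen--Yi proof of Theorem \ref{th-1.1}. Suppose both $\pi_1$ and $\pi_2$ are nonzero. Then $f-\zeta$ takes the form $R_1^{z/c}\pi_1\bigl(1+(R_2/R_1)^{z/c}\pi_2/\pi_1\bigr)-\zeta$. By the second main theorem, or a Borel-type lemma on exponential sums with periodic coefficients of small growth relative to $e^{z\log(R_2/R_1)/c}$, the zeros of $f-\zeta$ would then have counting function $N(r,1/(f-\zeta))\sim T(r,f)$. This contradicts $\delta(\zeta,f)>0$. Hence $f=R_j^{z/c}\pi_j+\zeta$ for one $j$.

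The growth bound $\sigma(\pi_j)\le\sigma(f)$ follows from $\pi_j=(f-\zeta)R_j^{-z/c}$, since multiplication by an exponential of order $1$ does not raise the order above $\max\{1,\sigma(f)\}$, and $\sigma(f)\ge1$ in this setting. In case (ii) the double root is $-a_1/(2a_2)$, and Class II gives $f=(\pi_0+z\pi_1)(-a_1/(2a_2))^{z/c}$. The same exceptional-value argument should kill the $z\pi_1$ term, giving $f=(-a_1/(2a_2))^{z/c}\pi+\zeta$.

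I expect the main obstacle to be the elimination step that uses the Nevanlinna exceptional value. The periodic coefficients $\pi_j$ need not be small functions with respect to the exponentials, so a direct Borel lemma is unavailable. I would first try to show that $\pi_j$ is entire, using the explicit solution via Casoratian-type formulas in terms of $f(z)$ and $f(z+c)$. I would then compare growth along the period direction to apply Nevanlinna's theorem on three small functions. The constant term $\zeta$ also needs reconciling: it must be accounted for in the decomposition, since $\zeta$ itself solves $L^2_c(\zeta)=B\zeta$ only when $\zeta=0$ or $P(1)=0$.
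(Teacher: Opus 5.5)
\textbf{There is a genuine gap, and it cannot be closed, because the forms claimed in (i) and (ii) are false in general.}

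Your reduction to Theorem \ref{th2.2} with $n=2$, together with the discriminant bookkeeping, is exactly what the paper offers: it presents Corollary \ref{cor-2.2} only as a consequence of that theorem and gives no further argument. But Classes I and II there yield $f=R_1^{z/c}\pi_1+R_2^{z/c}\pi_2$, resp.\ $f=(\pi_0+z\pi_1)(-a_1/(2a_2))^{z/c}$. So the single-term forms rest entirely on your elimination step.

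That step is false, not merely hard. Two nonzero components do not force $N(r,1/(f-\zeta))\sim T(r,f)$.
\begin{itemize}
\item \emph{Case (i).} Take $f(z)=e^{z}+e^{2z}$. Then $N(r,1/f)=N(r,1/(1+e^{z}))\sim r/\pi$ while $T(r,f)\sim 2r/\pi$, so $\delta(0,f)=1/2$. With $c=1$, $a_2=1$, $a_1=-(e+e^2)$ and $a_0=e^3+B$ ($B\neq0$ arbitrary), we get $P(w)=(w-e)(w-e^2)$. Hence $L^2_c(f)\equiv Bf$, the value $0$ is shared CM, $f(z+1)\not\equiv f(z)$, and the discriminant $(e^2-e)^2$ is nonzero. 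Yet neither $e^{-z}f=1+e^{z}$ nor $e^{-2z}f=1+e^{-z}$ is $1$-periodic. Another branch of $R_j^{z}$ only changes these by a $1$-periodic factor, so (i) fails.
\item \emph{Case (ii).} Take $f(z)=ze^{z}$ with $a_2=1$, $a_1=-2e$, $a_0=e^2+B$. It satisfies $L^2_c(f)\equiv Bf$ with zero discriminant and $\delta(0,f)=1$. But $e^{-z}f=z$ is not periodic, so the $z\pi_1$ term cannot be killed and (ii) fails.
\end{itemize}
Hence no second-main-theorem or Borel-type argument can supply the missing step. What is actually provable is the two-term (resp.\ $\pi_0+z\pi_1$) representation. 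Your identity $R_1^{z/c}\pi_1=(f(z+c)-R_2f(z))/(R_1-R_2)$ gives it and also shows the $\pi_j$ are entire.

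\textbf{The $+\zeta$ issue you flagged is also fatal, not a matter of bookkeeping.} Since $L^2_c(\zeta)=(a_0+a_1+a_2)\zeta$, the form $f=R_j^{z/c}\pi_j+\zeta$ together with $L^2_c(f)\equiv Bf$ forces $(a_0+a_1+a_2-B)\zeta=0$. So for $\zeta\neq0$ and $P(1)\neq0$, the asserted form contradicts the very equation you derived.

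\textbf{Even the conclusion $a=0$, which you take at once from Theorem \ref{th2.2}, is not safe.} Let $f=e^{z}+\zeta$ with $\zeta\neq0$, so $\delta(\zeta,f)=1$. Take $c=1$ and $(a_2,a_1,a_0)=(1,-(e+1),1+e)$. Then $L^2_c(e^{z})=e^{z}$ and $L^2_c(1)=1$, so $L^2_c(f)\equiv f$. Thus $L^2_c(f)$ shares every value CM with $f$, including nonzero ones, while all hypotheses (including $f(z+1)\not\equiv f(z)$) hold.
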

Text result we obtained in this paper is the following when $ f $ has a finite Borel exceptional value.
\begin{thm}\label{th1.05}
	{ Let $f$ be a finite order transcendental entire function with a finite Borel exceptional value $\zeta$. Let $L_\eta^n(f)$ be the linear difference operator defined in \eqref{e-22.3} such that $\sum_{j=0}^{n}a_j=0$ and $\eta \in \mathbb{C} \setminus \{0\}$ with $f(z+c) \not\equiv f(z)$. If $L_c^n(f)$ and $f$ share the value $d \neq \zeta$ CM, then $d = 0$ and $L_\eta^n(f) \equiv Af$ for some non-zero constant $A$. The structural characterization of the solution $f(z)$ is determined by the roots of the characteristic equation $P(w) = \sum_{j=0}^{n} a_j w^j - A = 0$ as follows:\vspace{2mm}
		
		\noindent Class I: Distinct Roots (Quasi-Periodic Representation). If the characteristic equation $P(w) = 0$ possesses $n$ distinct roots $\rho_1, \rho_2, \ldots, \rho_n$, then $f(z)$ is expressed as:$$f(z) = \sum_{j=1}^{n} \rho_j^{z/c} \pi_j(z),$$ where $\pi_j(z) \in \mathcal{M}_c$ are periodic meromorphic functions with period $c$.\vspace{1.2mm} 
		
		\noindent Special Case: If $\rho_j \in \{0, 1\}$ (with at least one root non-zero), the solution $f(z)$ reduces to a periodic function in $\mathcal{M}_c$.\vspace{1.2mm}
		
		\noindent Class II: Multiple Roots (General Structural Form). If the characteristic equation $P(w) = 0$ has multiple roots $\sigma_1, \sigma_2, \ldots, \sigma_q$ with multiplicities $N_1, N_2, \ldots, N_q$ respectively, then $f(z)$ admits the representation:$$f(z) = \sum_{k=1}^{q} \left( \sum_{m_k=1}^{N_k-1} z^{m_k} \pi_{m_k}(z) \right) \sigma_k^{z/c},$$ where $\pi_j \in \mathcal{M}_c$.}
\end{thm}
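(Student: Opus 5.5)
We would follow the classical Chen–Yi scheme behind Theorem \ref{th-1.1}, adapted to the operator $L^n_c(f)$. Since $\zeta$ is a finite Borel exceptional value of the finite order entire function $f$, Hadamard factorization lets us write
\[
f(z)=\zeta+H(z)e^{Q(z)},
\]
where $Q$ is a polynomial of degree $q=\sigma(f)\ge 1$ and $H\not\equiv 0$ is entire with $\sigma(H)=\lambda(f-\zeta)<\sigma(f)$. Using the hypothesis $\sum_{j=0}^n a_j=0$, the constant cancels in the operator, so
\[
L^n_c(f)=\sum_{j=0}^n a_jH(z+jc)e^{Q(z+jc)}=e^{Q(z)}\sum_{j=0}^n a_jH(z+jc)e^{Q(z+jc)-Q(z)}=:e^{Q(z)}G(z).
\]
Here each $Q(z+jc)-Q(z)$ has degree at most $q-1$, so $\sigma(G)<\sigma(f)$. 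If $G\equiv 0$, then $L^n_c(f)\equiv 0$ cannot share $d$ with $f$ in a nontrivial way, and we treat this degenerate case separately.

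The CM sharing of $d$ gives
\[
\frac{L^n_c(f)-d}{f-d}=e^{P(z)},
\]
where $P$ is a polynomial of degree at most $q$, because both sides have finite order. Substituting the two representations yields
\begin{equation}\label{eq-plan}
G(z)e^{Q(z)}-d=e^{P(z)}\bigl(H(z)e^{Q(z)}+\zeta-d\bigr).
\end{equation}
We then compare growth terms of order $q$ in \eqref{eq-plan}, using Borel's lemma on linear combinations of exponentials with small-order coefficients. Since $\zeta-d\neq 0$, the term $(\zeta-d)e^{P}$ must be matched by another term. There are three possibilities.

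\emph{Case 1: $\deg P<q$.} Then $e^P$ is a small coefficient. Comparing the constant terms gives $-d=(\zeta-d)e^P$, so $e^P$ is a constant $\mu$. Comparing the $e^Q$ terms gives $G=\mu H$.

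\emph{Case 2: $P-Q$ has degree less than $q$.} Then $e^{P+Q}$ is of genuinely higher growth, and it is unmatched unless $H\equiv 0$, which is impossible.

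\emph{Case 3: $P+Q$ has degree less than $q$.} Then $(\zeta-d)e^{P}$ would be the only term behaving like $e^{-Q}$, so $\zeta=d$, which is excluded.

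Hence we are in Case 1, with $e^P\equiv B$. The key deduction is $d=0$. From $-d=(\zeta-d)B$ together with $G=BH$, equation \eqref{eq-plan} gives $Be^Q G\cdots$, and in fact $L^n_c(f)=BHe^Q=B(f-\zeta)$. The sharing identity then reads
\[
B(f-\zeta)-d=B(f-d),
\]
so $d(B-1)=B\zeta$ along with $-d=(\zeta-d)B$. Eliminating gives $d(B-1)=Bd-d$ trivially, so a finer argument is needed. We expect this to be the main obstacle: the honest conclusion is $L^n_c(f)\equiv B(f-\zeta)$, and forcing $d=0$ (equivalently $\zeta=0$) must exploit the specific normalisation $\sum_j a_j=0$. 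In particular, $L^n_c$ annihilates constants, so $L^n_c(f)=L^n_c(f-\zeta)$. We would first try to show, via counting functions of $d$-points together with the identity $B(f-\zeta)-d=B(f-d)$, that $\zeta=0$ and hence $d=0$, treating the possibility $B=1$ separately.

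Once $L^n_c(f)\equiv Af$ with $A\neq 0$ is established, the structural description is a linear constant-coefficient difference equation over the field $\mathcal{M}_c$. Writing $E$ for the shift $z\mapsto z+c$, the operator is $P(E)=\prod_k (E-\sigma_k)^{N_k}$, which factors into commuting pieces. For a simple root $\rho$, the equation $(E-\rho)g=0$ means $g\rho^{-z/c}$ is $c$-periodic, so $g=\rho^{z/c}\pi$ with $\pi\in\mathcal{M}_c$. For a root of multiplicity $N$, induction on $N$ gives solutions of the form $\sigma^{z/c}\sum_{m<N}z^m\pi_m$, since $z^m$ times a periodic function is reduced in degree by $E-1$. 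The decomposition of the general solution into root components follows from the Bézout identity for coprime polynomials in $E$, which gives the kernel decomposition $\ker P(E)=\bigoplus_k\ker(E-\sigma_k)^{N_k}$. The special case is immediate: if the only relevant roots lie in $\{0,1\}$, every term with $\rho=0$ vanishes, and the terms with $\rho=1$ contribute $1^{z/c}\pi=\pi$, so $f\in\mathcal{M}_c$.
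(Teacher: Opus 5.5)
The gap is exactly the step you flag yourself: passing from $L^n_c(f)\equiv B(f-\zeta)$ to $d=0$ and $L^n_c(f)\equiv Af$. No finer argument can close it, because under the stated hypotheses that conclusion is false.

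Your own relation $-d=(\zeta-d)B$ with $B\neq0$ already shows this. If $d=0$, then $B\zeta=0$, so $\zeta=0=d$, contradicting $d\neq\zeta$. Hence in fact $d\neq0$ always.

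Here is a concrete counterexample. Take $f(z)=e^z+1$, whose Borel exceptional value is $\zeta=1$, with $n=1$, $a_1=1$, $a_0=-1$ and $c=\log 3$. Then $\Delta_cf=2e^z$ and $\Delta_cf-2=2(f-2)$. So $\Delta_cf$ and $f$ share $d=2\neq\zeta$ CM, yet $d\neq0$, and $\Delta_cf=2(f-1)$ is not a constant multiple of $f$.

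More generally, let $f=\zeta+e^z$ with $\zeta\neq0$ and $\kappa=\sum_j a_je^{jc}\notin\{0,1\}$. Then the value $d=\kappa\zeta/(\kappa-1)$ is shared CM. Counting $d$-points cannot change this, since the identity $L^n_c(f)-d\equiv B(f-d)$ already encodes the sharing completely.

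What your argument actually proves is a corrected statement. You first need to add the case you omitted: $\deg P=q$ with leading coefficient different from $\pm$ that of $Q$. Borel's lemma disposes of it at once, since all pairwise exponent differences then have degree $q$. With that case included, you get
\[
L^n_c(f)\equiv \frac{d}{d-\zeta}\,(f-\zeta),\qquad d\neq 0 .
\]
The structural classification therefore applies to $f-\zeta$, not to $f$. Your kernel-decomposition argument for that part is fine, and it correctly keeps the $m=0$ term that the statement's Class II omits. Moreover, $L^n_c(f)\equiv Af$ for a constant $A$ happens only when $\zeta=0$, and then $A=1$.

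The paper's proof fails at the same point. It shows the sharing quotient is constant by a deficiency argument rather than your Borel-lemma comparison: if the quotient were non-constant, one would get $m(r,1/(f-\zeta))=S(r,f)$, contradicting $\delta(\zeta,f)>0$. It then dismisses $d\neq0$ by appeal to Step II in the proof of Theorem \ref{th2.2}.

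That step works only because there the shared value is the Borel exceptional value itself, which produces the genuine order-$k$ term $a(1-\sum_j a_j)e^{-h}$. In the present setting, in your notation, the analogous identity is $e^{Q}(G-BH)=d(1-B)+B\zeta$. This yields only $G=BH$ and $d(B-1)=B\zeta$, with no contradiction.
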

In particular for $ n=2 $, we obtain the following result as an immediate corollary of Theorem \ref{th1.05}.
\begin{cor}\label{cor-1.05}
	Let $ f $ be a finite order transcendental entire function of finite order having a finite Borel exceptional value $ \zeta $ and $ {L}^2_{\eta}(f) $ be defined in \eqref{e-22.3} such that $ a_2+a_1+a_0=0 $ and $ \eta\in\mathbb{C}\setminus\{0\} $ be a constant such that $ f(z+c)\not\equiv f(z) $. If $ {L}^2_{c}(f) $ and $ f $ share $ d \;(\neq \zeta) $ $ CM $, then $ d=0\;\;\text{and}\;\; {L}^n_{\eta}(f)\equiv {A}f, $
	where $ {A}\in \mathbb{C}\setminus\{0\} $. Furthermore,
	\begin{enumerate}
		\item[(i)] If $ a_1^2+4a_2{A}\neq 4a_2a_0 $, then  $ f(z) $ must of the form 
		\begin{equation*}
			f(z)=[{R}_1({A})]^{z/c}\pi_1(z)\;\; \text{or} \;\; f(z)=[{R}_2({A})]^{z/c}\pi_2(z), 
		\end{equation*}
		where  $ \pi_j(z)\in\mathcal{M}_c\; (j=1, 2) $ with $ 	\max\{\sigma(\pi_1),\;\sigma(\pi_2)\}\leq \sigma(f). $
		\item[(ii)] If $ a_1^2+4a_2{A}= 4a_2a_0 $, then $ f(z)=\left(-\frac{a_1}{2a_2}\right)^{z/c}\pi(z), $
		where $ \pi(z)in\mathcal{M}_c $ with $ \sigma(\pi)\leq \sigma(f) $.
	\end{enumerate}
\end{cor}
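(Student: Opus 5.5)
The statement to prove is Corollary \ref{cor-1.05}, the case $n=2$ of Theorem \ref{th1.05}. The plan is to apply that theorem and then solve the resulting second-order linear difference equation by hand.

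\textbf{Step 1: reduce to a difference equation.} Since $f$ is a finite order transcendental entire function with finite Borel exceptional value $\zeta$, and $L^2_c(f)$ shares $d\neq\zeta$ CM with $f$, Theorem \ref{th1.05} with $n=2$ and $a_2+a_1+a_0=0$ gives $d=0$ and $L^2_c(f)\equiv Af$ for some $A\neq 0$. If one prefers to see the mechanism rather than cite the theorem, I would proceed as follows.
\begin{itemize}
\item Write $f=\zeta+e^{\alpha}h$ (Hadamard), where $\alpha$ is a polynomial and $\lambda(h)=\sigma(h)<\sigma(f)$.
\item Note that $a_2+a_1+a_0=0$ kills the constant $\zeta$ in $L^2_c(f)$.
\item Use CM sharing to get $(L^2_c(f)-d)/(f-d)=e^{\beta}$ with $\beta$ a polynomial of degree at most $\sigma(f)$.
\item Compare growth via the Borel exceptional property (Borel's lemma / Hadamard). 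This forces $\beta$ to be constant and $d=0$; otherwise a nonvanishing constant term $d(e^\beta-1)$ contradicts the growth comparison.
\end{itemize}

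\textbf{Step 2: factor the operator.} Now solve $a_2f(z+2c)+a_1f(z+c)+(a_0-A)f(z)=0$. Let $R_1(A),R_2(A)$ be the roots of $a_2w^2+a_1w+a_0-A=0$, namely
\[
R_{1,2}(A)=\frac{-a_1\pm\sqrt{a_1^2-4a_2(a_0-A)}}{2a_2}.
\]
The discriminant is nonzero exactly when $a_1^2+4a_2A\neq 4a_2a_0$. Writing $E f(z)=f(z+c)$, the operator factors as $a_2(E-R_1)(E-R_2)$.

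\textbf{Step 3: the distinct-roots case (i).} Set $g=f(z+c)-R_2f(z)$, so that $g(z+c)=R_1g(z)$. Then $\pi_1:=R_1^{-z/c}g$ lies in $\mathcal{M}_c$, and likewise $\pi_2:=R_2^{-z/c}\big(f(z+c)-R_1f(z)\big)\in\mathcal{M}_c$. This gives $f=\rho_1^{z/c}\tilde\pi_1+\rho_2^{z/c}\tilde\pi_2$ after dividing by $R_1-R_2$.

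To get the single-term form claimed in (i), I will use the Borel exceptional value. Here $f$ is entire and $f-\zeta=e^{\alpha}h$ has few zeros, which is incompatible with a genuine sum of two independent quasi-periodic pieces. The argument is to consider $f(z+c)-R_jf(z)=R_{3-j}^{z/c}\cdot(\text{periodic})$; if both pieces are nonzero, apply Borel's theorem to the exponential-polynomial structure to force one piece to vanish. Since $d=0$ is also a Borel-type situation, the conclusion should follow with $\zeta$ absorbed (the corollary's forms have no $+\zeta$). I expect this to be the main obstacle, because the periodic coefficients are themselves transcendental and Borel's theorem must be applied with their growth controlled ($\sigma(\pi_j)\le\sigma(f)$ versus $\sigma(e^{\alpha})$).

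\textbf{Step 4: the double-root case (ii) and the order bound.} In case (ii) the double root is $R=-a_1/(2a_2)$, and $(E-R)^2f=0$. First, $(E-R)f=R^{z/c}p$ with $p\in\mathcal{M}_c$. Then $R^{-z/c}f$ satisfies $u(z+c)-u(z)=p/R$, which yields $f=R^{z/c}(\pi+z\,p/(cR))$. The linear term in $z$ is excluded by the same Borel-exceptional argument, leaving $f=R^{z/c}\pi$.

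Finally, the orders satisfy $\sigma(\pi_j)\le\sigma(f)$ because $\pi_j=R^{-z/c}\cdot(\text{linear combination of shifts of } f)$. Shifts preserve order, and the factor $R^{-z/c}$ has order at most $1$.
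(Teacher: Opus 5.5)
\textbf{There is a genuine gap, and it cannot be closed, because the conclusion $d=0$ is false.} Your Step 1 follows the paper's route exactly: the paper gives no separate proof and simply sets $n=2$ in Theorem~\ref{th1.05}.

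Your growth sketch drops a term. With $f=\zeta+e^{\alpha}h$ and $a_0+a_1+a_2=0$ you get $L^2_c(f)=L^2_c(e^{\alpha}h)$. But $e^{\beta}(f-d)$ still contains $e^{\beta}\zeta$. Once $\beta$ is constant, say $e^{\beta}=B$ (that part is fine, e.g.\ by the paper's proximity-function argument), the identity is
\[
e^{\alpha}\bigl(\Psi-Bh\bigr)=d(1-B)+B\zeta,\qquad \Psi(z):=\sum_{j=0}^{2}a_jh(z+jc)e^{\alpha(z+jc)-\alpha(z)},
\]
with $\sigma(\Psi)<\deg\alpha=\sigma(f)$. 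Comparing orders only gives $d(1-B)+B\zeta=0$, i.e.\ $B=d/(d-\zeta)$, equivalently $L^2_c(f-\zeta)\equiv B(f-\zeta)$. Nothing forces $d=0$.

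In fact $d=0$ is impossible. It would give $\zeta\neq0$, so $0$ is not Borel exceptional and $\lambda(f)=\sigma(f)$. But the zeros of $L^2_c(f)=e^{\alpha}\Psi$ have exponent at most $\sigma(\Psi)<\sigma(f)$, so the two functions cannot share $0$ CM.

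The two claimed conclusions also contradict the hypothesis outright. $L^2_c(f)\equiv Af$ gives $\Psi-Ah=A\zeta e^{-\alpha}$, which forces $\zeta=0$, and then $d=0=\zeta$.

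Two concrete counterexamples:
\begin{itemize}
\item $f(z)=e^z$, $c=\log2$, $(a_2,a_1,a_0)=(1,-2,1)$. Then $L^2_c(f)=(e^c-1)^2e^z\equiv f$, so $d=1\neq0=\zeta$ is shared CM.
\item $f(z)=e^z+\zeta$ with $\zeta\neq0$, $c=\log3$, same coefficients. Then $L^2_c(f)-\tfrac{4\zeta}{3}\equiv4\bigl(f-\tfrac{4\zeta}{3}\bigr)$, so $d=4\zeta/3$ is shared CM. Yet $L^2_c(f)=4e^z\not\equiv Af$ for every constant $A$.
\end{itemize}
The paper gets $d=0$ by invoking Step~II of the proof of Theorem~\ref{th2.2}. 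That step uses that the shared value \emph{is} the exceptional value, which is exactly what $d\neq\zeta$ rules out.

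Your Steps 3--4 address a real mismatch. Theorem~\ref{th1.05}, via Lemma~\ref{lem-3.7}, only yields $f=R_1^{z/c}\pi_1+R_2^{z/c}\pi_2$, plus a term $z\,\pi(z)R^{z/c}$ for a double root. So the single-term forms need an extra argument, which the paper never supplies.

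However, Step 4 fails. Take $c=1$, $(a_2,a_1,a_0)=(1,-2(1+i),1+2i)$ (sum $0$) and $f(z)=z(1+i)^z$. Then:
\begin{itemize}
\item $L^2_1(f)\equiv f$, so $A=1$ and $a_1^2+4a_2A=4+8i=4a_2a_0$, which is case (ii).
\item $f$ has Borel exceptional value $0$, since its only zero is $z=0$, and $f(z+1)\not\equiv f(z)$.
\item $f$ shares $d=1$ CM with $L^2_1(f)$.
\end{itemize}
Yet $f(z)/(1+i)^z=z$ (or $ze^{-2\pi imz}$ for another branch) is not $1$-periodic. The factor $z$ contributes only one zero, so no Borel-exceptional argument can remove it.

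The ``$\zeta$ absorbed'' remark in Step 3 is also unfounded, since $L^2_c(f)\equiv Af$ forces $\zeta=0$.

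What the hypotheses genuinely yield is $L^2_c(g)\equiv\frac{d}{d-\zeta}\,g$ for $g=f-\zeta$, with $d\neq0$. Your factorization in Step 2 correctly describes $g$, not $f$, and without the claimed single-term reduction in (ii).
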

Following is a corollary of {Theorem \ref{th1.05}}.
\begin{cor}
	Under the assumption of {Theorem \ref{th1.05}}, if $ f $ has a non-zero finite Borel exceptional value $ a $, then for $ \eta\neq 0 $, the value $ a $ is not shared by the functions $ {L}_{\eta}^n(f) $ and $ f $. 
\end{cor}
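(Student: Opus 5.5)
The corollary is stated under the assumptions of Theorem \ref{th1.05}, so the plan is to argue by contradiction with that theorem. Suppose $f$ has a finite Borel exceptional value $a\neq 0$, $\eta\neq 0$, and ${L}_{\eta}^n(f)$ and $f$ share $a$ CM. We then want to apply Theorem \ref{th1.05} with $\zeta=a$. The catch is that the theorem only treats a shared value $d\neq\zeta$, while here the shared value equals the exceptional value. So the argument has to reproduce the mechanism of that theorem's proof directly for $d=a=\zeta$.

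First we write $f(z)=a+h(z)e^{\alpha z^k}$, using Hadamard factorization and the Borel exceptional property. Here $h$ is entire with $\sigma(h)<\sigma(f)=k$, and $\alpha\neq0$. Since $\sum_{j=0}^n a_j=0$, the constant $a$ is annihilated, and
\[
{L}_{\eta}^n(f)=\sum_{j=0}^n a_j h(z+j\eta)e^{\alpha (z+j\eta)^k}=H(z)e^{\alpha z^k},
\]
where $H(z)=\sum_j a_j h(z+j\eta)e^{\alpha[(z+j\eta)^k-z^k]}$ satisfies $\sigma(H)<k$ or $\sigma(H)\le k-1$. Sharing $a$ CM together with finite order gives
\[
\frac{{L}_{\eta}^n(f)-a}{f-a}=e^{Q(z)}
\]
for a polynomial $Q$. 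Substituting, this becomes
\[
H e^{\alpha z^k}-a=h\,e^{\alpha z^k+Q}.
\]

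Next we compare growth in this identity. If $H\not\equiv 0$, then $e^{\alpha z^k}$ appears with small-order coefficient on the left together with the nonzero constant $-a$. A Borel-type lemma on linear combinations of exponentials with small coefficients ($\sum g_j e^{p_j}\equiv0$ forces trivial coefficients when the differences $p_i-p_j$ have degree $k$ and the coefficients have order $<k$) then forces $Q$ to be constant with $H=he^{Q}$. That leaves $-a\equiv 0$, a contradiction. Any other choice of $\deg Q$ makes the three terms $He^{\alpha z^k}$, $-a$, $he^{\alpha z^k+Q}$ have pairwise "independent" exponents, which forces $a=0$ directly. If $H\equiv 0$, then ${L}_{\eta}^n(f)\equiv 0$ and the CM sharing means $f-a$ has no zeros and equals $-a e^{-Q}$. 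Then $f-a=he^{\alpha z^k}$ yields $h e^{\alpha z^k+Q}\equiv -a$, which is impossible since the left side is transcendental or vanishes at zeros of $h$ unless $h$ is constant. In that degenerate case, $f=a+ce^{\alpha z^k}$ with ${L}_\eta^n(f)\equiv0$ has to be excluded: for $k\ge2$ the functions $e^{\alpha(z+j\eta)^k}$ are linearly independent over small functions, so the combination cannot vanish. For $k=1$, we would invoke the standing hypothesis that $f(z+\eta)\not\equiv f(z)$ together with $A\neq0$ in the theorem.

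The main obstacle is this last degenerate case, ${L}_{\eta}^n(f)\equiv0$ (e.g.\ $k=1$ with $e^{\alpha\eta}$ a root of $\sum a_jw^j$). There the sharing is vacuous, and the contradiction must come from the hypothesis package of Theorem \ref{th1.05} rather than from Nevanlinna estimates. I would handle it by noting that the theorem's conclusion ${L}^n_\eta(f)\equiv Af$ with $A\neq0$ is part of "the assumption", which rules this case out.
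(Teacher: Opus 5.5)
Your route is the one the paper relies on; the paper gives no separate proof. As you observe, the corollary does not follow literally from Theorem~\ref{th1.05}, whose shared value is $d\neq\zeta$. The operative argument is the Borel-exceptional-value computation in the proof of Theorem~\ref{th2.2}: Hadamard factorization, $(L^n_\eta(f)-a)/(f-a)=e^{p}$, then Lemma~\ref{lem2.8} and order comparisons. Your use of $\sum a_j=0$ to write $L^n_\eta(f)=He^{\alpha z^k}$ streamlines it well. Three local slips need fixing:
\begin{itemize}
\item If $\deg Q<k$ you do not get $Q$ constant, only $H\equiv he^{Q}$ and $a=0$, which suffices.
\item If $\deg Q=k$ with leading coefficient $-\alpha$, the exponents are \emph{not} pairwise independent, since $R:=\alpha z^k+Q$ has degree $<k$. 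The right tool there is an order comparison in $He^{\alpha z^k}=a+he^{R}$, which shows $H\equiv0$. So within your branch $H\not\equiv0$ this subcase is contradictory, and it is exactly where the degenerate case comes from.
\item In that degenerate case, $he^{R}\equiv-a$ gives $h=-ae^{-R}$, not $h$ constant, so $f=a+Ce^{P(z)}$ with $\deg P=k$. Your Borel-lemma argument excluding $L^n_\eta(f)\equiv0$ for $k\ge2$ still works unchanged.
\end{itemize}

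The genuine gap is your treatment of $k=1$, $L^n_\eta(f)\equiv0$. The conclusion $L^n_\eta(f)\equiv Af$, $A\neq0$, of Theorem~\ref{th1.05} is available only if you retain its hypothesis that $L^n_\eta(f)$ and $f$ share some $d\neq\zeta$ CM. But then the corollary is vacuous and your analysis is superfluous: $f=\zeta+he^{P}$, $L^n_\eta(f)=He^{P}$ and $L^n_\eta(f)\equiv Af$ give $(H-Ah)e^{P}\equiv A\zeta$, forcing $\zeta=0$.

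Without that hypothesis, nothing you cite rules the case out, since $f(z+\eta)\not\equiv f(z)$ only excludes the root $w=1$ of $\sum_j a_jw^j$. For example, take $L^2_\eta(f)=f(z+2\eta)-f(z)$ and $f(z)=a+e^{\pi iz/\eta}$ with $a\neq0$. Every other hypothesis holds, $L^2_\eta(f)\equiv0$, and both $L^2_\eta(f)-a\equiv-a$ and $f-a$ are zero-free.

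What actually closes the case is the convention in the paper's definition that sharing is defined only for \emph{non-constant} functions. Since $He^{P}$ with $\sigma(H)<k$ is either $\equiv0$ or of order exactly $k$, sharing forces $H\not\equiv0$. This is precisely the tacit ``assumption that $L^n_\eta(f)$ is non-constant'' in Subcase~2.2 of the paper's proof of Theorem~\ref{th2.2}, which is the same configuration (leading coefficients of $p$ and $h$ opposite). State it explicitly and your argument is complete.
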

\begin{exm}
	Let $ f(z)=be^z+a $, where $ a, b $ are two distinct complex numbers. We see that $ f $ has the Borel exceptional value $ a $. Clearly, for any $ c\neq 2k\pi i $, $ k\in\mathbb{Z} $, and for the choices of the constants $ a_0, a_1 $ and $ a_2 $ satisfying $ a_0+a_1+\cdots+a_n=0 $,  one can verify that the value $ a $ is not shared by ${L}_{c}^n(f)=b\left(a_ne^{n c}+\cdots+a_1e^{c}+a_0\right)e^z $ and $ f(z) $.
\end{exm}
\section{\bf Some Preliminary results}
\par In this section, we present some necessary lemmas which will play key role to prove the main results of this paper. 
\begin{lem}\cite{Chi & Fen-RM-2008}\label{lem2.1}
	Let $ f $ be a meromorphic function with a finite order $ \sigma $, and $ \eta $ be a non-zero constant. Then for any $ \epsilon >0 $, we have \begin{equation}
		\label{e2.11} m\left(r,\frac{f(z+\eta)}{f(z)}\right)=O\left(r^{\sigma-1+\epsilon}\right).
	\end{equation}
\noindent The equation (\ref{e2.11}) in which $ \sigma $ is the (finite) order of $ f $, and $ \epsilon >0 $, implies
\begin{equation*}
	m\left(r,\frac{f(z+\eta)}{f(z)}\right)=S(r,f),
\end{equation*}
 possibly outside a set of finite logarithmic measure.
\end{lem}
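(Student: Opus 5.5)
The plan is to follow the Poisson--Jensen approach of Chiang and Feng. Fix $r$ large, put $z=re^{i\theta}$, and choose the auxiliary radius $R=2r$, assuming $r>2|\eta|$ so that both $z$ and $z+\eta$ lie well inside $|\zeta|<R$. Let $a_\mu$ and $b_\nu$ denote the zeros and poles of $f$ in $|\zeta|<R$. The Poisson--Jensen formula, applied at $z$ and at $z+\eta$ and subtracted, gives
\begin{align*}
\log\left|\frac{f(z+\eta)}{f(z)}\right| =& \frac{1}{2\pi}\int_0^{2\pi}\log|f(Re^{i\phi})|\,\bigl(P(z+\eta,\phi)-P(z,\phi)\bigr)\,d\phi\\
&+\sum_{\mu}\log\left|\frac{B(z+\eta,a_\mu)}{B(z,a_\mu)}\right|-\sum_{\nu}\log\left|\frac{B(z+\eta,b_\nu)}{B(z,b_\nu)}\right|.
\end{align*}
Here $P$ is the Poisson kernel $\mathrm{Re}\,\frac{Re^{i\phi}+w}{Re^{i\phi}-w}$ and $B(w,a)=\frac{R(w-a)}{R^2-\bar a w}$ is the Blaschke factor. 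I then take $\log^+$ of the absolute value and integrate in $\theta$, treating the kernel part and the zero/pole part separately.

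For the kernel part, the mean value inequality gives $|P(z+\eta,\phi)-P(z,\phi)|\le C|\eta|R/(R-|z|-|\eta|)^2$. Since $\int_0^{2\pi}\bigl|\log|f(Re^{i\phi})|\bigr|\,d\phi\le 2\pi\,(m(R,f)+m(R,1/f))\le 4\pi T(R,f)+O(1)$, this part contributes $O(T(2r,f)/r)$. The Blaschke denominators $R^2-\bar a w$ change by a relative amount $O(|\eta|/R)$, so they contribute $O(n(R)/r)$, where $n(R)=n(R,f)+n(R,1/f)$. The numerators give terms $\log|1+\eta/(z-a)|$, which I bound by $\log^+\bigl(1+|\eta|/|z-a|\bigr)$ in both directions (with the roles of $z$ and $z+\eta$ swapped for the negative part).

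The key elementary estimate, and the main obstacle, is a uniform bound for $a\in\mathbb C$:
\[
\int_0^{2\pi}\log\Bigl(1+\frac{|\eta|}{|re^{i\theta}-a|}\Bigr)d\theta = O\Bigl(\frac{\log r}{r}\Bigr).
\]
To prove it, I split the circle into the arc where $|re^{i\theta}-a|\ge |\eta|$, where $\log(1+x)\le x$ and $\int d\theta/|re^{i\theta}-a|=O((1+\log r)/r)$, and the short arc of length $O(|\eta|/r)$ near $a$. On the short arc the integrand has only a logarithmic singularity, which is integrable and contributes $O((\log r)/r)$. Summing over the at most $n(2r)$ zeros and poles then gives $O(n(2r)\log r/r)$.

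Finally, finite order gives $T(2r,f)=O(r^{\sigma+\epsilon/2})$, and $n(2r)\le \frac{1}{\log 2}N(4r)=O(r^{\sigma+\epsilon/2})$. The logarithms are absorbed into $r^{\epsilon/2}$, so
\[
m\Bigl(r,\frac{f(z+\eta)}{f(z)}\Bigr)=O(r^{\sigma-1+\epsilon})
\]
for all large $r$, with no exceptional set. For the second assertion, the bound $r^{\sigma-1+\epsilon}$ is not automatically $o(T(r,f))$ for all $r$, so I would instead invoke the Halburd--Korhonen form of the difference lemma. Its growth lemma (Borel-type) for $T(r+|\eta|,f)$ versus $T(r,f)$ gives $m(r,f(z+\eta)/f(z))=o(T(r,f))$ outside a set of finite logarithmic measure, which is the stated $S(r,f)$.
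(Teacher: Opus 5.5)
Your proof is correct. The paper gives no argument for this lemma; it simply quotes Chiang--Feng. Your derivation of (\ref{e2.11}) is essentially their Poisson--Jensen proof. Your uniform $O(\log r/r)$ bound for the averaged Blaschke-numerator terms is valid, since it follows from $|re^{i\theta}-a|\ge r|\sin(\theta-\arg a)|$. The resulting estimate holds for all large $r$ with no exceptional set.

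Where you genuinely depart from the paper is the second assertion, and you are right to do so. The paper claims that (\ref{e2.11}) \emph{implies} $m(r,f(z+\eta)/f(z))=S(r,f)$ off a set of finite logarithmic measure. That implication fails when the lower order $\mu$ of $f$ is smaller than $\sigma-1$. To see this, choose $\mu<\mu'<\sigma-1$ and $r_n\to\infty$ with $T(r_n,f)\le r_n^{\mu'}$. Monotonicity of $T$ then gives $T(r,f)\le r^{\sigma-1+\epsilon}$ on $[r_n^{\mu'/(\sigma-1+\epsilon)},r_n]$, and these intervals have logarithmic measure tending to infinity. So the bound $O(r^{\sigma-1+\epsilon})$ cannot certify $o(T(r,f))$ there, and invoking Halburd--Korhonen is the correct way to reach the $S(r,f)$ form.

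If you prefer to avoid that citation, your own computation suffices with a different radius:
\begin{itemize}
\item take $R=r+h$ with $h=r^{\gamma}$ and $1/2<\gamma<1$;
\item use $n(R)=O\bigl((R/h)\,T(R+h,f)\bigr)$;
\item the kernel, denominator and numerator terms then total $O\bigl(r^{1-2\gamma}\log r\;T(r+2r^{\gamma},f)\bigr)$;
\item the Borel-type fact $T(r+2r^{\gamma},f)\le 2T(r,f)$ outside a set of finite logarithmic measure, valid for finite-order $f$, finishes the argument.
\end{itemize}
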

\begin{lem}\cite{Moh-FFA-1971,Val-BSM-59}\label{lem2.2}
	If $ \mathcal{R}(f) $ is rational in $ f $ and has small meromorphic
	coefficients, then 
	\begin{equation*}
		T(r,\mathcal{R}(f))=\deg_f(\mathcal{R})T(r,f)+S(r,f).
	\end{equation*}
\end{lem}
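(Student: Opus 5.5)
We follow the classical Valiron--Mohon'ko scheme: first the polynomial case, then a reduction of the general rational case to it by a Euclidean-type induction on the degree. Throughout, write $\mathcal{R}(f)=P(f)/Q(f)$, where
\[
P(f)=\sum_{i=0}^{p}\alpha_i f^i, \qquad Q(f)=\sum_{j=0}^{q}\beta_j f^j .
\]
The coefficients $\alpha_i,\beta_j$ are small functions, i.e.\ $T(r,\alpha_i),T(r,\beta_j)=S(r,f)$, with $\alpha_p\beta_q\not\equiv0$. We assume $P$ and $Q$ are coprime over the field of small functions, and set $d=\deg_f(\mathcal{R})=\max\{p,q\}$.

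\emph{Step 1: the polynomial case $T(r,P(f))=pT(r,f)+S(r,f)$.}

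For the upper bound, apply the elementary inequalities $T(r,gh)\le T(r,g)+T(r,h)$ and $T(r,g+h)\le T(r,g)+T(r,h)+O(1)$ repeatedly. This gives $T(r,P(f))\le pT(r,f)+S(r,f)$.

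For the lower bound we treat the two parts of the characteristic separately.
\begin{enumerate}
\item[(a)] Counting function. A pole of $f$ of order $m$ that is not a zero or pole of any coefficient is a pole of $P(f)$ of order $pm$. Hence $N(r,P(f))\ge pN(r,f)-S(r,f)$.
\item[(b)] Proximity function. Put
\[
\Phi(z)=\max_{i<p}\Bigl(2p\,\bigl|\alpha_i(z)/\alpha_p(z)\bigr|\Bigr)^{1/(p-i)}+1 .
\]
On the set $E$ where $|f|>\Phi$ we have $|P(f)|\ge\tfrac12|\alpha_p||f|^p$. On the complement of $E$ we have $\log^+|f|\le\log^+\Phi$. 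Since $\int\log^+\Phi\,d\theta/2\pi\le\sum m(r,\alpha_i/\alpha_p)+O(1)=S(r,f)$, it follows that $pm(r,f)\le m(r,P(f))+S(r,f)$.
\end{enumerate}
Adding (a) and (b) gives the lower bound, and with the upper bound this proves Step 1.

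\emph{Step 2: the rational case, by induction on $q=\deg Q$.}

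The case $q=0$ is Step 1. Suppose $q\ge1$.

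If $p\ge q$, divide in the polynomial ring over small functions: $P=SQ+P_1$ with $\deg P_1<q$. Then $\mathcal{R}(f)=S(f)+P_1(f)/Q(f)$.
\begin{itemize}
\item When $p>q$, the quantity $\mathcal{R}(f)-S(f)$ has degree $q<p$. Apply the same pole and proximity comparison as in Step 1 to $\mathcal{R}(f)=P(f)/Q(f)$ directly: poles of $f$ give poles of $\mathcal{R}(f)$ of multiplicity $p-q$, and the zeros of $Q(f)$ contribute further. This is best organised by the next bullet.
\item Cleaner route: the first main theorem gives $T(r,\mathcal{R})=T(r,1/(\mathcal{R}-\gamma))+O(1)$. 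Therefore, after replacing $\mathcal{R}$ by $1/(\mathcal{R}-\gamma)$ for a suitable constant $\gamma$ (and using $T(r,g+s)=T(r,g)+S(r,f)$ for small $s$), we may assume $p<q$ and $P,Q$ coprime.
\end{itemize}

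Now suppose $p<q$. Then $T(r,\mathcal{R})=T(r,Q/P)+O(1)$. Write $Q=S_2P+P_2$ with $\deg S_2=q-p$ and $\deg P_2<p$. This gives
\[
T(r,Q/P)=T\bigl(r,\;S_2(f)+P_2(f)/P(f)\bigr).
\]
We need to show this equals $qT(r,f)+S(r,f)$. The plan is to combine two facts.
\begin{itemize}
\item The polynomial part: Step 1 gives $T(r,S_2(f))=(q-p)T(r,f)+S(r,f)$.
\item The remainder part: the induction hypothesis gives $T(r,P_2/P)=pT(r,f)+S(r,f)$.
\end{itemize}

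\emph{Main obstacle.} Combining these two facts is the delicate point, since the characteristic is only subadditive and a priori does not add. To get the lower bound we will count poles directly. At a pole of $f$ the term $S_2(f)$ has a pole of order $(q-p)m$ and $P_2/P$ is bounded there. At a zero of $P(f)$ the term $S_2$ is finite, and by coprimality $P_2\ne0$ there (outside a small exceptional set coming from the resultant, which is a nonzero small function). Hence
\[
N\bigl(r,S_2+P_2/P\bigr)\ge (q-p)N(r,f)+N(r,1/P(f))-S(r,f).
\]
For the proximity function we use the same $\Phi$-splitting as in Step 1: on $\{|f|>\Phi\}$ the sum is comparable to $|f|^{q-p}$, while off this set it is controlled by the remainder term.

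This will yield $T\ge qT(r,f)-S(r,f)$; the matching upper bound is again the elementary one. The parts needing the most care are the resultant/coprimality bookkeeping, which shows that common zeros occur only on a set counted by $S(r,f)$, and verifying that the proximity contributions of the two terms genuinely add in this way.
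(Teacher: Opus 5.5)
The paper gives no proof of this lemma; it only quotes it from Valiron and Mohon'ko. Your outline is the classical argument from that literature: the polynomial case by separate estimates of $N$ and $m$, then a Euclidean reduction of the rational case. The strategy is sound and does close, but four steps need repair as written.

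(i) In Step 1(a) the ``hence'' does not follow from the generic case. A pole of $f$ sitting at a zero or pole of a coefficient can have arbitrarily large order, so such points are not automatically negligible in $N(r,f)$. The fix: let $f$ have a pole of order $\nu$ at $z_0$, and let $\mu_i$ be the pole order of $\alpha_i/\alpha_p$ there. Either $\nu>\mu_i/(p-i)$ for all $i$, so the leading term is not cancelled, or $\nu\le\max_i\mu_i$. In both cases $P(f)$ has a pole of order at least $p\nu-p\sum_i\mu_i$ minus the zero order of $\alpha_p$ at $z_0$. These losses sum to $S(r,f)$.

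(ii) The upper bound in Step 1 needs Horner's scheme $P(f)=(\cdots(\alpha_pf+\alpha_{p-1})f+\cdots)f+\alpha_0$, or separate estimates of $m$ and $N$. Subadditivity applied to the expanded sum $\sum_i\alpha_if^i$ only gives $\tfrac{p(p+1)}{2}T(r,f)$.

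(iii) The induction cannot be on $\deg Q$ as stated. For $p>q$ your flip $\mathcal{R}\mapsto 1/(\mathcal{R}-\gamma)$ raises the denominator degree to $p$. The remainder you eventually reach then has denominator $Q$ of the original degree $q$, which an induction on $\deg Q$ does not cover. Induct on $d=\max\{p,q\}$ instead: after normalising to $p<q=d$, the remainder $P_2/P$ has maximal degree $p<d$.

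(iv) The decisive step is only sketched. It closes most cleanly if you pair like with like:
\[
N\bigl(r,S_2(f)+P_2/P\bigr)\ge (q-p)N(r,f)+N(r,P_2/P)-S(r,f),\qquad m\bigl(r,S_2(f)+P_2/P\bigr)\ge (q-p)m(r,f)+m(r,P_2/P)-S(r,f).
\]
The counting inequality holds for the following reason. Away from poles of $f$ and of the coefficients, $S_2(f)$ is holomorphic, so the poles of the sum are exactly those of $P_2/P$. At generic poles of $f$, $P_2/P$ vanishes because $\deg P_2<\deg P$, so the sum has a pole of order $(q-p)$ times that of $f$. The remaining points are handled as in (i).

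The proximity inequality comes from splitting the circle. On $\{|f|>\Phi\}$, with $\Phi$ enlarged so that $|S_2(f)|$ dominates the small-function bound for $|P_2(f)/P(f)|$, the term $\log^+|P_2/P|$ is small. There $(q-p)\log^+|f|$ is at most $\log^+|S_2(f)+P_2/P|$ plus a small term. On the complement, $\log^+|f|$ and $\log^+|S_2(f)|$ are small.

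Adding the two inequalities and using the induction hypothesis $T(r,P_2/P)=pT(r,f)+S(r,f)$ gives $T\ge qT(r,f)-S(r,f)$. Coprimality then enters only through the induction hypothesis, so the resultant bookkeeping you flagged is not needed. Your version with $N(r,1/P(f))$ also works, since $N(r,P_2/P)\le N(r,1/P(f))+S(r,f)$. However, it does require the resultant to show that zeros of $P(f)$ really are poles of $P_2/P$.
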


\begin{lem}\cite{Chi & Fen-RM-2008}\label{lem2.3}
	Let $ f $ be a transcendental meromorphic function with finite order $ \sigma $, and $ \eta $ be a non-zero constant. Then for each $ \epsilon >0 $, we have 
\begin{align*}
	T(r,f(z+\eta))&=T(r,f)+O\left(r^{\sigma-1+\epsilon}\right)+O\left(\log r\right)\\T(r,f(z+\eta))&=T(r,f)+S(r,f),
\end{align*}
	 possibly outside of a finite logarithmic measure.
\end{lem}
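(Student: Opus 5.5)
The plan is to compare the two Nevanlinna characteristics term by term, writing $T=m+N$. The proximity part will come from Lemma \ref{lem2.1}, and the counting part from an elementary comparison of pole-counting functions on concentric discs.

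For the proximity function, I will use $f(z+\eta)=f(z)\cdot\frac{f(z+\eta)}{f(z)}$ and subadditivity of $\log^+$:
\[
m(r,f(z+\eta))\le m(r,f)+m\left(r,\frac{f(z+\eta)}{f(z)}\right)=m(r,f)+O\left(r^{\sigma-1+\epsilon}\right),
\]
where the last step is \eqref{e2.11} of Lemma \ref{lem2.1}.

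For the counting function, a pole of $f(z+\eta)$ in $|z|\le t$ is a pole of $f$ in $|w|\le t+|\eta|$. Hence $n(t,f(z+\eta))\le n(t+|\eta|,f)$. Integrating, and absorbing the contribution of a possible pole at the origin into an $O(\log r)$ term, I expect
\[
N(r,f(z+\eta))\le N(r+|\eta|,f)+O(\log r).
\]
Then
\[
N(r+|\eta|,f)-N(r,f)=\int_r^{r+|\eta|}\frac{n(t,f)}{t}\,dt\le \frac{|\eta|}{r}\,n(r+|\eta|,f).
\]
Since $f$ has order $\sigma$, we have $n(r,f)=O(r^{\sigma+\epsilon})$, so this difference is $O(r^{\sigma-1+\epsilon})$.

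Adding the two estimates gives
\[
T(r,f(z+\eta))\le T(r,f)+O\left(r^{\sigma-1+\epsilon}\right)+O(\log r).
\]
For the reverse inequality, I will apply the same argument to $g(z)=f(z+\eta)$ with shift $-\eta$. This is legitimate because $g$ has the same order $\sigma$, which itself follows from the one-sided inequality just proved. Together the two inequalities yield the first asserted formula.

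The step I expect to be the main obstacle is the passage to the second formula, i.e. showing that $O(r^{\sigma-1+\epsilon})+O(\log r)=S(r,f)$ outside a set of finite logarithmic measure. The $O(\log r)$ term is harmless because $f$ is transcendental, so $\log r=o(T(r,f))$. The power term is more delicate, since $T(r,f)$ only reaches size $r^{\sigma-\epsilon}$ along a sequence and not for all large $r$. My first attempt will avoid comparing with a power of $r$ altogether. Instead I will keep the sharper intermediate bounds: the counting part is controlled by $N(r+|\eta|,f)-N(r,f)$, and the proximity part is the logarithmic-difference quantity of Lemma \ref{lem2.1}, which that lemma already states is $S(r,f)$ off an exceptional set. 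It therefore remains to show $T(r+|\eta|,f)=(1+o(1))T(r,f)$ outside a set of finite logarithmic measure. I would prove this with a Borel-type growth lemma: if the ratio exceeded $1+\delta$ on a set of infinite logarithmic measure, iterating along that set would force $T(r,f)$ to grow faster than any power of $r$, contradicting finite order.
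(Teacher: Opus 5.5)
The paper does not prove this lemma; it quotes it from Chiang--Feng. Your treatment of the first formula is essentially their argument: the proximity part comes from Lemma~\ref{lem2.1}, the counting part from $n(t,f(z+\eta))\le n(t+|\eta|,f)$, and the reverse inequality from applying the same bound to $g(z)=f(z+\eta)$. You are also right that $O(r^{\sigma-1+\epsilon})$ need not be $S(r,f)$; it fails whenever the lower order of $f$ is below $\sigma-1$. So the second formula cannot be read off from the first.

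\textbf{The gap.} Your intermediate inequality $N(r,f(z+\eta))\le N(r+|\eta|,f)+O(\log r)$ is false. Substituting $s=t+|\eta|$ in $\int_1^r n(t+|\eta|,f)\,dt/t$ gives $ds/(s-|\eta|)$, not $ds/s$. The surplus $|\eta|\int^{r+|\eta|}n(s,f)\,ds/(s(s-|\eta|))$ can be of exact size $r^{\sigma-1}$. For example, let $f$ have order $2$ and let $f(z+1)$ have poles at the positive integers $k$ with multiplicity $k$. Then
\[
N(r,f(z+1))-N(r+1,f)=\sum_{k<r}k\left[\log\left(1+\frac1k\right)-\log\left(1+\frac1r\right)\right]\sim\frac r2 .
\]
For the first formula this costs nothing, since the surplus is $O(r^{\sigma-1+\epsilon})+O(\log r)$. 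For the second formula it is fatal to your plan as written. The whole point of that plan was to avoid a power-of-$r$ term, yet the counting comparison you rely on produces exactly such a term. So the claim that ``the counting part is controlled by $N(r+|\eta|,f)-N(r,f)$'' is not justified.

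\textbf{The repair.} For $s\ge(1+\delta)|\eta|/\delta$ one has $1/(s-|\eta|)\le(1+\delta)/s$. Hence, for every $\delta>0$,
\[
N(r,f(z+\eta))\le(1+\delta)N(r+|\eta|,f)+C_\delta .
\]
The extra $\delta N(r+|\eta|,f)$ is absorbed once you know $N(r+|\eta|,f)=(1+o(1))N(r,f)$ off a set of finite logarithmic measure.

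Note that you need this statement for $N(r,f)$, not $T(r+|\eta|,f)=(1+o(1))T(r,f)$. Since $m(r,f)$ is not monotone, the latter does not bound $N(r+|\eta|,f)-N(r,f)$. Your Borel-type iteration works verbatim for any non-decreasing function of finite order, and the case of finitely many poles is trivial. The exceptional sets (from this growth lemma and from Lemma~\ref{lem2.1}) do not depend on $\delta$, so you may then let $\delta\to0$.

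With these two changes your argument becomes the usual Goldberg--Ostrovskii/Halburd--Korhonen proof and goes through. The reverse inequality then follows from applying the result to $g$, as you planned.
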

We establish the following lemma, which will be instrumental in proving the main result of this paper.
\begin{lem}\label{lem2.5}
	Let $ f $ be a transcendental meromorphic function of finite order. Then \begin{equation*}
		m\left(r,\frac{f^{(k)}(z+\eta_1)}{f(z+\eta_2)}\right)=S(r,f),
	\end{equation*} for all $ z $ satisfying $ |z|=r\not\in E, $ where $ E $ is a set with finite logarithmic measure, and $ \eta_1 $, $ \eta_2 $ are constants and $ k $ is a non-negative integer. 
\end{lem}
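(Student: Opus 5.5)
The plan is to factor the quotient through a derivative at a shifted point, so that both available tools apply:
\[
\frac{f^{(k)}(z+\eta_1)}{f(z+\eta_2)}=\frac{f^{(k)}(z+\eta_1)}{f(z+\eta_1)}\cdot\frac{f(z+\eta_1)}{f(z+\eta_2)} .
\]
Since $m(r,gh)\le m(r,g)+m(r,h)$, it suffices to show that each factor has proximity function $S(r,f)$, possibly outside a set of finite logarithmic measure. (If $k=0$ only the second factor appears. We may assume $f(z+\eta_2)\not\equiv 0$, which holds because $f$ is transcendental.)

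\textbf{The shift factor.} Put $g(z)=f(z+\eta_2)$. Then $g$ is meromorphic of the same finite order $\sigma$, because $T(r,g)=T(r,f)+S(r,f)$ by Lemma \ref{lem2.3}. The second factor equals $g(z+\eta)/g(z)$ with $\eta=\eta_1-\eta_2$. If $\eta=0$ the factor is identically $1$ and its proximity function vanishes. Otherwise Lemma \ref{lem2.1} gives
\[
m\left(r,\frac{g(z+\eta)}{g(z)}\right)=O(r^{\sigma-1+\epsilon})=S(r,g),
\]
off an exceptional set of finite logarithmic measure. Since $S(r,g)=S(r,f)$ by Lemma \ref{lem2.3}, this factor is $S(r,f)$.

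\textbf{The derivative factor.} Put $h(z)=f(z+\eta_1)$, so that $f^{(k)}(z+\eta_1)=h^{(k)}(z)$. The classical lemma on the logarithmic derivative, iterated through the chain
\[
\frac{h^{(k)}}{h}=\prod_{j=1}^{k}\frac{h^{(j)}}{h^{(j-1)}},
\]
together with $T(r,h^{(j)})=O(T(r,h))$ off an exceptional set, gives $m(r,h^{(k)}/h)=S(r,h)$. For finite order there is in fact no exceptional set, and the bound is $O(\log r)$. Lemma \ref{lem2.3} then converts $S(r,h)$ into $S(r,f)$.

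Adding the two estimates yields the claim, with $E$ the union of the exceptional sets; a finite union of sets of finite logarithmic measure again has finite logarithmic measure.

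\textbf{Main obstacle.} The step that needs care is the bookkeeping of error terms under the shift. Here $S(r,h)$ is measured against $T(r,f(z+\eta_1))$ rather than $T(r,f)$, so one must check that $T(r,f(z+\eta))$ and $T(r,f)$ are comparable. Lemma \ref{lem2.3} supplies exactly this, since $O(r^{\sigma-1+\epsilon})+O(\log r)=o(T(r,f))$ for transcendental $f$ after choosing $\epsilon$ small (or simply using that the order is finite). I will also note that $f(z+\eta_1)$ is not identically zero and is transcendental, so both lemmas apply.
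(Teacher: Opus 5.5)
Your argument is correct and is essentially the paper's. The paper writes the quotient as $\frac{f^{(k)}(z+\eta_1)}{f(z+\eta_1)}\cdot\frac{f(z+\eta_1)}{f(z)}\cdot\frac{f(z)}{f(z+\eta_2)}$ and bounds each proximity function. You merge the last two factors into a single shift quotient of $f(z+\eta_2)$, and you make explicit the conversion $S(r,f(z+\eta))=S(r,f)$ via Lemma~\ref{lem2.3}, which the paper leaves implicit.

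One caution concerns your parenthetical justification that $O(r^{\sigma-1+\epsilon})+O(\log r)=o(T(r,f))$ for every transcendental $f$ of finite order. This is false when the lower order of $f$ is at most $\sigma-1$. Rest that step on the $S(r,f)$ form of Lemma~\ref{lem2.3} as stated, not on that derivation.
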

\begin{proof} Using {Lemma \ref{lem2.1}}, we obtain 
\begin{align*}
m\left(r,\frac{f^{(k)}(z+\eta_1)}{f(z+\eta_2)}\right)&= m\left(r,\frac{f^{(k)}(z+\eta_1)}{f(z+\eta_1)}\frac{f(z+\eta_1)}{f(z)}\frac{f(z)}{f(z+\eta_2)}\right) \\&\leq m\left(r,\frac{f^{(k)}(z+\eta_1)}{f(z+\eta_1)}\right)+m\left(r,\frac{f(z+\eta_1)}{f(z)}\right)+m\left(r,\frac{f(z)}{f(z+\eta_2)}\right)\\&\quad\quad+O(1)\\&= S(r,f).
\end{align*} This completes the proof.
\end{proof}	
\begin{lem}\label{lem22.6}\cite{Chi & Fen-RM-2008}
Let $ \eta_1 $ and $ \eta_2 $ be two arbitrary complex numbers, and let $ f(z) $ be a meromorphic function of finite order $ \sigma $. Let $ \epsilon>0 $ be given, then there exists a subset $ E\subset (0,\infty) $ with finite logarithmic measure such that for all $ |z|=r\not\in E\cup [0,1], $ we have 
	\begin{equation*}
\exp\bigg(-r^{\sigma-1+\epsilon}\bigg)\leq \bigg|\frac{f(z+\eta_1)}{f(z+\eta_2)}\bigg|\leq \exp\bigg(r^{\sigma-1+\epsilon}\bigg).
	\end{equation*}
\end{lem}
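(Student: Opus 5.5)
The statement immediately above is Lemma \ref{lem22.6}, the pointwise estimate of Chiang and Feng. The plan is to bound $\log|f(z+\eta_1)/f(z+\eta_2)|$ from above and below using the Poisson--Jensen formula. Since
\begin{equation*}
\frac{f(z+\eta_1)}{f(z+\eta_2)}=\frac{f(z+\eta_1)}{f(z)}\cdot\frac{f(z)}{f(z+\eta_2)},
\end{equation*}
it suffices to treat the case $\eta_2=0$ and an arbitrary shift $\eta$. Applying that case twice and using $\epsilon/2$ in place of $\epsilon$ then gives the general statement, with the union of the two exceptional sets still of finite logarithmic measure.

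First, fix $R=2r$, say, with $|z|=r$ large. Write the Poisson--Jensen formula for $\log|f(z+\eta)|$ and for $\log|f(z)|$ on the disc $|w|<R$, and subtract the two. The difference splits into two parts.
\begin{enumerate}
\item[(a)] A Poisson-kernel part. The kernel difference is $O(|\eta|/r)$ uniformly, so this part is bounded by $O\bigl((|\eta|/r)\,(m(R,f)+m(R,1/f))\bigr)=O(T(R,f)/r)=O(r^{\sigma-1+\epsilon})$.
\item[(b)] A sum over the zeros and poles $a_\nu$ with $|a_\nu|<R$ of terms $\log\bigl|(z+\eta-a_\nu)/(z-a_\nu)\bigr|$, together with the corresponding Blaschke-type correction terms, which are again $O(|\eta|/r)$ each.
\end{enumerate}
The correction terms in (b) contribute $O\bigl((n(R,f)+n(R,1/f))/r\bigr)=O(r^{\sigma-1+\epsilon})$.

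The main obstacle is the remaining sum $\sum_\nu\log\bigl|1+\eta/(z-a_\nu)\bigr|$, taken over all zeros and poles in $|w|<R$. When $z$ is close to some $a_\nu$, a single term of this sum can be arbitrarily large in either direction, so a pointwise bound cannot hold everywhere. I would handle it by excluding small discs around the $a_\nu$. Choose radii $|a_\nu|^{-\alpha}$ with $\alpha>\sigma$ (equivalently $\alpha$ exceeding the exponent of convergence of the zeros and poles). The set of $r$ for which the circle $|z|=r$ meets one of these discs then has logarithmic measure bounded by $\sum_\nu |a_\nu|^{-\alpha-1}<\infty$; this is the exceptional set $E$.

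For $r\notin E$, split the sum into terms with $|z-a_\nu|\ge 2|\eta|$ and the rest.
\begin{itemize}
\item For terms with $|z-a_\nu|\ge 2|\eta|$, use $\bigl|\log|1+u|\bigr|\le 2|u|$ for $|u|\le 1/2$. Bound the result by a Stieltjes integral against $n(t)$, the combined counting function of zeros and poles. Using Hölder's inequality, or the standard estimate $\sum_{|a_\nu|<R}|z-a_\nu|^{-1}=O(r^{\sigma-1+\epsilon})$ outside the exceptional discs, gives $O(r^{\sigma-1+\epsilon})$.
\item For the remaining terms, at most $O(n(r+2|\eta|))=O(r^{\sigma+\epsilon/2})$ of them occur. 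Each is bounded in absolute value by $O(\log r)$ because of the disc exclusion. This gives $O(r^{\sigma+\epsilon/2}\log r)$.
\end{itemize}
The last bound is too crude, since we want $r^{\sigma-1+\epsilon}$. So the refinement I would actually use counts only the zeros and poles in the annulus $r-2|\eta|\le|a_\nu|\le r+2|\eta|$. Their number is $n(r+2|\eta|)-n(r-2|\eta|)$. A Borel-type lemma shows this difference is $O(r^{\sigma-1+\epsilon/2})$ outside a set of finite logarithmic measure, which is then absorbed into $E$.

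Collecting the estimates gives $\bigl|\log|f(z+\eta)/f(z)|\bigr|\le r^{\sigma-1+\epsilon}$ for $|z|=r\notin E\cup[0,1]$ with $r$ large. Enlarging $E$ by a bounded interval removes the finitely many small $r$. Exponentiating gives the two-sided bound in the statement.
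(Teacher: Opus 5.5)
\textbf{Comparison with the cited proof.} The paper gives no proof of this lemma; it quotes it from Chiang--Feng, where it follows from the one-shift estimate for $|f(z+\eta)/f(z)|$. That estimate is also derived from the Poisson--Jensen formula. There, however, the zero/pole sum is controlled by Gundersen's lemma on $\sum_{|c_n|<\alpha r}|z-c_n|^{-1}$, in effect applied both to the zeros and poles $c_n$ and to their translates $c_n-\eta$. This gives $\bigl|\log|f(z+\eta)/f(z)|\bigr|=O\bigl(T(\alpha r,f)/r+n(\alpha r)r^{-1}\log^{\alpha}r\,\log^{+}n(\alpha r)\bigr)$ off a set of finite logarithmic measure.

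Your near/far splitting, with a Borel-type count of $n(r+2|\eta|)-n(r-2|\eta|)$, is a legitimate and more elementary substitute for Gundersen's lemma. The following parts are sound: the reduction to $\eta_2=0$, the Poisson-kernel term, the Blaschke terms, and the annulus count. Two steps, however, fail as written.

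\textbf{Near terms.} Excluding the discs $D(a_\nu,|a_\nu|^{-\alpha})$ only bounds $-\log|z-a_\nu|$ from above. The term $\log|z+\eta-a_\nu|$ tends to $-\infty$ as $z\to a_\nu-\eta$. Your $E$ does not stop the circle $|z|=r$ from passing through $a_\nu-\eta$. At such a point $f(z+\eta)$ has a zero or a pole, and the lemma's two-sided bound generally fails for that $r$. To get the two-sided $O(\log r)$ bound you must also exclude the discs $D(a_\nu-\eta,|a_\nu|^{-\alpha})$. This adds only $O\bigl(\sum_\nu|a_\nu|^{-\alpha-1}\bigr)$ to the logarithmic measure.

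\textbf{Far terms.} The ``standard estimate'' $\sum_{|a_\nu|<R}|z-a_\nu|^{-1}=O(r^{\sigma-1+\epsilon})$ outside your discs is false. Outside $D(a_\nu,|a_\nu|^{-\alpha})$ a single term can be as large as $|a_\nu|^{\alpha}$. Restricting to $|z-a_\nu|\ge 2|\eta|$ does not rescue it either. Take a cluster of $\asymp r_k^{\sigma}$ zeros in a unit disc centred at modulus $r_k$, with $r_k$ growing geometrically, which is compatible with order $\sigma$. At a suitable point of the circle $|z|=r_k+3+2|\eta|$, which need not meet any of your discs, the far sum is then $\asymp r_k^{\sigma}$.

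The exceptional radii therefore have to come from averaging in $r$, and H\"older's inequality does not supply them. Within your framework you can argue as follows. First bound the far contribution by $2|\eta|\phi(r)$, where
\[
\phi(r)=\int_{[0,R)}\frac{dn(t)}{\max(|t-r|,2|\eta|)}.
\]
By Fubini, $\int_s^{2s}\phi(r)\,dr=O\bigl(n(4s)\log s\bigr)$. Applying Chebyshev's inequality on the dyadic blocks $[2^j,2^{j+1}]$ then gives $\phi(r)=O(r^{\sigma-1+\epsilon})$ off a set of finite logarithmic measure. This is the same mechanism as your annulus count. Alternatively, cite Gundersen's lemma, which also makes the near/far split unnecessary.
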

\begin{rem}
In { Lemma \ref{lem22.6}}, if $ \sigma <1 $, let $ \epsilon={(1-\sigma)}/{2}>0 $, then we can see that 
\begin{align*}
	\frac{f(z+\eta_1)}{f(z+\eta_2)}\rightarrow 1\; \mbox{as}\;|z|=r\not\in E\cup [0,1],\; r\rightarrow\infty.
\end{align*}
\end{rem}
We prove the following lemma, which serves as a key component in the proof of our main theorem.
\begin{lem}\label{lem2.6}
	Let $ f $ be a non-constant meromorphic function and $ {L}^n_{c}(f) $ be defined in \eqref{e-22.3}. If $ f $ and $ {L}^n_{c}(f) $ share the values $ a $, $ b $ and $ \infty $ $ {CM} $, then $ f\equiv L^n_{\eta}(f) $ and $ f $ is not a rational function. 
\end{lem}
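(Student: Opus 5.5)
The plan is to argue in two stages: first establish the identity $f\equiv L^n_c(f)$ by a Nevanlinna-theoretic counting argument, and then exclude rational $f$ by studying the poles of $f$ and the behaviour of $L^n_c(f)$ at infinity. Throughout I would work in the setting of Theorem \ref{th1.2}, where $f$ has finite order, so that Lemma \ref{lem2.1}, Lemma \ref{lem2.3} and Lemma \ref{lem2.5} are available with error terms $S(r,f)$ outside a set of finite logarithmic measure.

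\emph{First stage: proving $f\equiv L^n_c(f)$.} Write $L=L^n_c(f)$ and suppose, for contradiction, that $L\not\equiv f$.
\begin{enumerate}
\item Since $a,b,\infty$ are shared CM, every $a$-point and every $b$-point of $f$ is a zero of $L-f$. Poles of $f$ cancel in $L-f$ with the same multiplicity. The second main theorem then gives
\begin{align*}
T(r,f)&\le \overline N\Big(r,\frac1{f-a}\Big)+\overline N\Big(r,\frac1{f-b}\Big)+\overline N(r,f)+S(r,f)\\
&\le N\Big(r,\frac1{L-f}\Big)+\overline N(r,f)+S(r,f).
\end{align*}
\item Next I would estimate $N(r,1/(L-f))\le T(r,L-f)+O(1)$. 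I would split this as $m(r,L-f)+N(r,L-f)$. Writing $L-f=f\cdot\big(L/f-1\big)$, Lemma \ref{lem2.5} gives $m(r,L/f-1)=S(r,f)$.
\item The poles of $L-f$ are controlled by $N(r,f)$ via Lemma \ref{lem2.3}. This yields a bound of the form $T(r,f)\le 2\overline N(r,f)+S(r,f)$-type relations, which are not immediately contradictory.
\item To close the argument I would use the standard three-CM-values device. Put
$$\frac{L-a}{f-a}=e^{\alpha},\qquad \frac{L-b}{f-b}=e^{\beta}.$$
By Lemma \ref{lem2.5}, $T(r,e^{\alpha})$ and $T(r,e^{\beta})$ are $S(r,f)$, because their proximity functions are small and, by the sharing, they have neither zeros nor poles.
\item Eliminating $L$ gives $f\,(e^\alpha-e^\beta)=a e^\alpha-be^\beta+b-a$. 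If $e^\alpha\not\equiv e^\beta$, this forces $T(r,f)=S(r,f)$, a contradiction. Hence $e^\alpha\equiv e^\beta$. Substituting back gives $(a-b)(e^\alpha-1)=0$, so $e^\alpha\equiv1$, i.e.\ $L\equiv f$.
\end{enumerate}

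\emph{Second stage: excluding rational $f$.} Assume $f$ is rational and $L\equiv f$.
\begin{enumerate}
\item Suppose $f$ has a pole. Among its finitely many poles choose $z_0$ minimizing $\mathrm{Re}(z_0\bar c)$. At the point $z_0-nc$, the term $a_nf(z+nc)$ has a pole. Any other term $a_jf(z+jc)$ would need a pole of $f$ at $z_0-(n-j)c$, which has strictly smaller $\mathrm{Re}(\cdot\,\bar c)$; this is impossible. Hence $L$ has a pole at $z_0-nc$, which is not a pole of $f$, contradicting $L\equiv f$. So $f$ is a polynomial.
\item For a polynomial $f$ of degree $d\ge1$, I would compare coefficients of $L(f)-f$ by expanding $f(z+jc)$ in Taylor series. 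The operator $L$ acts triangularly on polynomials. The top coefficient gives $\sum a_j=1$, and the next gives $\sum ja_j=0$, together with further moment conditions.
\item The shared values $a,b$ must then be used to rule out the remaining cases. Alternatively one simply appeals to the standing transcendence hypothesis in Theorem \ref{th1.2}.
\end{enumerate}

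\emph{Main obstacle.} I expect the main difficulty to be this last polynomial case. The moment conditions $\sum a_j=1$, $\sum ja_j=0,\dots$ can genuinely be satisfied for suitable coefficients, so the eigenvalue-$1$ condition alone does not kill polynomial solutions. I would first try to show that such $L$ forces $f$ to be constant via the condition $f(z+c)\not\equiv f(z)$. Failing that, I would restrict the rationality claim to the transcendental, finite-order setting in which the lemma is actually applied.
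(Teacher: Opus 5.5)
\textbf{Stage 1 has a genuine gap at step 4, and it cannot be closed.} Lemma \ref{lem2.5} only controls quotients of shifts of a single function, whereas
\[
\frac{L^n_c(f)-a}{f-a}=\sum_{j=0}^{n}a_j\,\frac{f(z+jc)-a}{f(z)-a}+a\Bigl(\sum_{j=0}^{n}a_j-1\Bigr)\frac{1}{f-a}.
\]
So all you get is $m(r,e^{\alpha})\le m\bigl(r,1/(f-a)\bigr)+S(r,f)$. This is not $S(r,f)$ unless $a=0$ or $\sum_j a_j=1$, and since $a\neq b$, at least one of $e^{\alpha},e^{\beta}$ is uncontrolled.

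Your added finite-order assumption does not repair this. For $a\neq 0$ take
\[
f(z)=a\,\frac{1+e^{\pi i z/c}}{1-e^{\pi i z/c}},\qquad L^2_c(f)=f(z+2c)-2f(z).
\]
Here $f$ has order $1$ and $f(z+c)\not\equiv f(z)$. Since $f$ has period $2c$, $L^2_c(f)=-f$. Because $f$ omits $\pm a$, the functions $f$ and $L^2_c(f)$ share $a,-a,\infty$ CM, yet $L^2_c(f)\not\equiv f$. In this example $e^{\alpha}=-e^{-\pi i z/c}$, so $T(r,e^{\alpha})=T(r,f)+O(1)$, which directly contradicts your step 4.

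Your step 5 is correct whenever step 4 is available. That happens, for example, when $f$ has finite order and $\sum_j a_j=1$: then $L^n_c(f)-a=L^n_c(f-a)$, and Lemma \ref{lem2.1} applies to $f-a$ and $f-b$. Alternatively, for non-integer order you can compare orders as in the proof of Theorem \ref{th1.2}. Steps 1--3 can simply be dropped.

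\textbf{Stage 2: the pole argument is right, but the polynomial obstruction is real.} Your extremal-pole argument (minimising $\mathrm{Re}(z\bar c)$ over the poles) is correct. It uses only that $\infty$ is shared, and it is cleaner than the paper's case analysis on zeros of $P$ and $Q$. Moreover, for a polynomial $f$, sharing $a$ and $b$ CM forces $L^n_c(f)-a=K_1(f-a)$ and $L^n_c(f)-b=K_2(f-b)$ with constants $K_1,K_2$, and subtracting gives $K_1=K_2=1$.

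However, the obstruction you anticipated is genuine. The polynomial $f(z)=z$ satisfies $f(z+2c)-2f(z+c)+2f(z)\equiv f(z)$. In general, non-constant polynomial solutions of $L^n_c(f)=f$ exist exactly when $w=1$ is a multiple root of $\sum_j a_jw^j-1$. Falling back on a transcendence hypothesis assumes the conclusion.

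\textbf{The statement as written cannot be proved, and the paper's proof does not get around this.}
\begin{itemize}
\item Its reduction to $a=0$ is not without loss of generality, since $L^n_c(f)-a\neq L^n_c(f-a)$ in general.
\item Sharing $0$ and $\infty$ CM gives only $f/L^n_c(f)=e^{\gamma}$ with $\gamma$ entire, not a constant $K$.
\item Its zero-chasing for rational $f$ is contradicted by $f(z)=z$.
\end{itemize}
A correct version needs extra hypotheses. For $f\equiv L^n_c(f)$, one would assume, e.g., finite order with $\sum_j a_j=1$, or non-integer order. To exclude rational $f$, one would assume that $w=1$ is not a multiple root of $\sum_j a_jw^j-1$.
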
 
\begin{proof}
	Let $g := L^n_{c}(f)$. Without loss of generality, assume that $a=0$. Since $f$ and $g$ share the values $0$ and $ \infty $, it follows from Lemma 2.1 in \cite[p. 110]{Yan & Yi-2006} that there exists a non-zero constant $K$ such that $f=Kg$. Furthermore, because $f$ and $g$ share the value $ b $CM, there exists a point$ z_0 \in \mathbb{C} $ such that $f(z_0) = g(z_0) = b$. This implies that $ K=1 $, and we thus conclude that $f \equiv g$.\vspace{1.5mm}
	
	\par We proceed by contradiction to prove that $f$ is not a rational function. Suppose, to the contrary, that $f$ is rational. Then $f$ can be written as $f(z) = P(z)/Q(z)$, where $P$ and $Q$ are coprime polynomials such that $PQ \not\equiv 0$. Let $E(0, P) = \{z : P(z) = 0\}$ and $E(0, Q) = \{z : Q(z) = 0\}$ denote the sets of zeros of $P$ and $Q$, respectively. It follows immediately that
	\begin{equation}
		\label{e1.1} E(0, P) \cap E(0, Q) = \emptyset.
	\end{equation}
	A simple calculation shows that
\begin{align*}
a_nf(z+nc)&+\cdots+a_1f(z+c)+a_0f(z)\\&=a_n\frac{P(z+nc)}{Q(z+nc)}+\cdots+a_1\frac{P(z+c)}{Q(z+c)}+a_0\frac{P(z)}{Q(z)}\\&=\frac{M_n(z)+\cdots+M_1(z)+M_0(z)}{Q(z+nc)\cdots Q(z+c)Q(z)}=\frac{P_1(z)}{Q_{1}(z)},
\text{(say)}
\end{align*}
where \[M_k(z):=a_kP(z+kc)\prod_{j=0, j\neq k}^{n}Q(z+j\eta),\; k=0, 1, 2, \ldots, n,\]
 $ P_1(z):=M_n(z)+\cdots+M_1(z)+M_0(z) $ and $ Q_1(z):=Q(z+nc)\cdots Q(z+\eta)Q(z) $ are two relatively prime polynomials and $ P_1(z)Q_1(z)\not\equiv 0 $. Furthermore, $ f $ is a rational function and satisfying $ E(a,f) = E(a,a_nf(z+nc)+\cdots+a_1f(z+c)+a_0f(z)) $, hence, there exists a polynomial $ h(z) $ such that 
	\begin{equation*}
			a_nf(z+nc)+\cdots+a_1f(z+c)+a_0f(z)-a=(f-a)h(z).
	\end{equation*} 
This can be expressed as
\begin{align*}
	\frac{M_n(z)+\cdots+M_1(z)+M_0(z)}{Q(z+nc)\cdots Q(z+c)Q(z)}-a\equiv
	\left(\frac{P(z)}{Q(z)}-a\right)h(z).
\end{align*}
 \noindent{\bf{Case 1.}} Let $P(z)$ be a non-constant polynomial. By the Fundamental Theorem of Algebra, there exists at least one $z_0 \in \mathbb{C}$ such that $P(z_0) = 0$. Consequently, we have 
\begin{equation}
\label{e2.4}
a_n\frac{P(z_0+nc)}{Q(z_0+nc)}+\cdots+a_1\frac{P(z_0+c)}{Q(z_0+c)}\equiv (1-h(z_0))a.
\end{equation}
\noindent{\bf{Subcase 1.1.}} Let $a=0$. It follows from (\ref{e2.4}) that $P(z_0+kc)=0$ for each $k=1, 2, \dots, n$, given that $a_1, a_2, \dots, a_n$ are arbitrary complex constants. From (\ref{e1.1}), we deduce by induction that $P(z_0+mc)=0$ for all positive integers $m$. Since a non-zero polynomial cannot have infinitely many zeros, $P(z)$ must be a non-zero constant.\\
	
\noindent{\bf{Subcase 1.2.}} We suppose that $ a\neq 0 $. From (\ref{e2.4}), it follows that
\begin{equation*}
P_1(z_0)=M_n(z_0)+\cdots+M_1(z_0)\equiv(1-h(z_0))aQ(z_0+nc)\cdots Q(z_0+c)=(1-h(z_0))aQ_1(z_0).
\end{equation*}
By the same argument used in the previous case, it can be shown that
\begin{align}\label{e22.5}P_1(z_0+mc)=(1-h(z_0))aQ_1(z_0+mc)
\end{align}
From (\ref{e2.4}) and (\ref{e22.5}), we obtain 
\begin{equation*}
\frac{P_1(z_0)}{Q_1(z_0)}=\frac{P_1(z_0+mc)}{Q_1(z_0+mc)}\; \text{for all	positive integer $ m $}
\end{equation*}
 which contradicts the fact that $ E(0,P_1)\cap
E(0,Q_1)=\phi $. Therefore, it follows that $ f(z) $ takes the form $ f(z)={d}/{Q(z)} $, where $ P(z)=d=\text{constant}\; (\neq
	0).$\vspace{1.5mm}
	
	\noindent{\bf{Case 2.}} Let $ Q(z) $ be a non-zero constant. A simple computation shows that
	\begin{align}
		\label{e2.5} a_nf(z+nc)+\cdots+a_1f(z+c)+a_0f(z)=\frac{d\bigg(N_n(z)+\cdots+N_1(z)+N_0(z)\bigg)}{Q(z)Q(z+c)\cdots Q(z+nc)},
	\end{align} 
where \[N_k(z):=a_k\prod_{j=0, j\neq k}^{n}Q(z+jc),\; k=0, 1, 2, \ldots, n.\] Since $ E(b,f)=E(b,a_nf(z+nc)+\cdots+a_1f(z+c)+a_0f(z)) $, then there exists a polynomial $ h_1(z) $ such that $
	a_nf(z+nc)+\cdot+a_1f(z+c)+a_0f(z)-b=(f-b)h_1(z) $ which can be written as	
	\begin{align}
		\label{e2.7} N_n(z)+\cdots+N_0(z)-\frac{b}{d}\equiv \bigg(\frac{d-bQ(z)}{d}\bigg)h_1(z)Q(z+nc)\cdots Q(z+c).
	\end{align} Since $ Q(z) $, $Q(z+c) $, $ \ldots $ ,$ Q(z+nc) $ all are non-constant polynomials, employing {Fundamental Theorem of Algebra}, there exist $ z_0 $, $ z_1 $, $ \ldots, $ $ z_n $ such that $ Q(z_0)=0, $ $ Q(z_1+c)=0 $, $ \ldots $, $ Q(z_n+nc)=0 $.\vspace{1.2mm} 

If $Q(z_0)=0$, then (\ref{e2.7}) yields $h_1(z_0)=a_0 - b/d$, a contradiction. Similarly, if $Q(z_1+\eta)=0$, then (\ref{e2.7}) implies that $Q(z_1+kc)=0$ for some $k \in \{0, 2, \dots, n\}$, which is impossible. If $Q(z_2+2c)=0$, then (\ref{e2.7}) again implies that $Q(z_2+kc)=0$ for some $k \in \{0, 1, \dots, n\}$, which is likewise impossible. Proceeding inductively, we arrive at a contradiction in all cases. Consequently, the polynomial $Q(z)$ must be a non-zero constant, say $d_2$. It then follows that $f(z) = d/d_2$ is a constant, contradicting the initial assumption that $f$ is non-constant. This completes the proof.
\end{proof}
By Lemma \ref{lem2.6}, any meromorphic solution $f$ to the difference equation $L^n_c(f) = f$ must be transcendental. The following result provides a complete characterization of the precise functional form of these solutions. For a comprehensive derivation of these forms, we refer the reader to the methods established in \cite[Eqs. (4.19)–(4.24)]{Ahamed-IJPAM-2022}.
\begin{lem}\cite{Ahamed-IJPAM-2022}\label{lem-3.7}
	Let $ f $ be a non-constant meromorphic of finite order and $ c $ be a constant such that $ f(z+c)\not\equiv f(z) $. If $ L^n_c(f)=f $, then 
	\begin{enumerate}
		\item[(i)] $ 	f(z)=\displaystyle\rho_1^{z/c}\pi_1(z)+\rho_2^{z/c}\pi_2(z)+\cdots+\rho_n^{z/c}\pi_n(z), $
		where $ \pi_j(z)\in\mathcal{M}_c\; (j=1, 2, \ldots, n) $ and $ \rho_j $ $ (j=1, 2, \ldots, n) $ are distinct roots of the equation $ a_nw^n+\cdots+a_1w+a_0-1=0. $ In particular, if $ \rho_j\in \{0, 1\} $  be such that at least one of $ \rho_j $'s are non-zero, then $ f\in\mathcal{M}_c $. 
		\item[(ii)] 
		\begin{align*}
			f(z)=\left(\sum_{m_1=1}^{N_1-1}z^{m_1}\pi_{m_1}(z)\right)\sigma_1^{z/c}+\cdots+\left(\sum_{m_q=1}^{N_q-1}z^{m_q}\pi_{m_q}(z)\right)\sigma_q^{z/c},
		\end{align*}
		where $ \pi_j\in\mathcal{M}_c $ and $ \sigma_1, \sigma_2 $, $ \ldots $, $ \sigma_q $ are multiple roots, of respective multiplicities $ N_1, N_2, \ldots, N_q $, of the equation $ a_nw^n+\cdots+a_1w+a_0-1=0. $ 
	\end{enumerate}
\end{lem}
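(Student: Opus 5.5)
We need to plan a proof of Lemma \ref{lem-3.7}: if $L^n_c(f)=f$, then $f$ has the stated forms. The idea is to treat the identity
\[
a_nf(z+nc)+\cdots+a_1f(z+c)+(a_0-1)f(z)=0
\]
as a linear recurrence with constant coefficients in the shift $z\mapsto z+c$. The coefficients of this recurrence are constants, which in particular lie in the field $\mathcal{M}_c$. We then solve it over that field exactly as one solves linear constant-coefficient recurrences.

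\textbf{Step 1: the shift operator.} Let $E$ be the operator $Ef(z)=f(z+c)$. The equation reads $P(E)f=0$ with
\[
P(w)=a_nw^n+\cdots+a_1w+a_0-1 .
\]
Note that $\pi\in\mathcal{M}_c$ commutes with $E$, i.e. $E(\pi g)=\pi\,Eg$.

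\textbf{Step 2: the key conjugation identity.} Fix a nonzero root $\rho$ of $P$ and a branch of $\log\rho$, and set $\rho^{z/c}=e^{(z/c)\log\rho}$. For any meromorphic $g$,
\[
E\bigl(\rho^{z/c}g\bigr)=\rho^{z/c}\,\rho\,Eg .
\]
Hence $(E-\rho)\bigl(\rho^{z/c}g\bigr)=\rho^{z/c}\,\rho\,(E-1)g$.

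So $\rho^{z/c}\pi(z)$ solves $(E-\rho)u=0$ exactly when $\pi\in\mathcal{M}_c$. By induction, $(E-\rho)^N u=0$ exactly when $u=\rho^{z/c}g$ with $(E-1)^Ng=0$. Next, $(E-1)$ maps $z^m\pi$ to $\bigl((z+c)^m-z^m\bigr)\pi$, which has $z$-degree $m-1$ over $\mathcal{M}_c$. Therefore the kernel of $(E-1)^N$ is $\{\sum_{m=0}^{N-1}z^m\pi_m:\pi_m\in\mathcal{M}_c\}$. This is proved by induction on $N$: at each step one solves $(E-1)h=z^{m}\pi$, which is done with a polynomial of degree $m+1$ with coefficients in $\mathcal{M}_c$, obtained by inverting a triangular system.

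\textbf{Step 3: the decomposition.} Factor $P(w)=a_n\prod_k (w-\sigma_k)^{N_k}$ over $\mathbb{C}$. Since the factors $(w-\sigma_k)^{N_k}$ are pairwise coprime, Bézout gives polynomials $Q_k$ with $\sum_k Q_k(w)\prod_{j\neq k}(w-\sigma_j)^{N_j}=1$. Apply this operator identity to $f$ and put
\[
f_k=Q_k(E)\prod_{j\ne k}(E-\sigma_j)^{N_j}f .
\]
Then $f=\sum_k f_k$ and $(E-\sigma_k)^{N_k}f_k=P(E)Q_k(E)f/a_n=0$. Step 2 now shows that each $f_k$ has the form $\sigma_k^{z/c}\sum_{m<N_k}z^m\pi_m$.

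When all roots are simple ($N_k=1$), this gives form (i). When some roots are multiple, it gives form (ii). The stated form (ii) writes the inner sum from $m_k=1$; the $m=0$ terms are the genuinely periodic part, and I would write the statement with sums starting at $0$, absorbing these terms. For the special case: if $\rho_j=1$, the corresponding term is $\pi_j\in\mathcal{M}_c$, and the term for $\rho_j=0$ vanishes. So if the only roots are $0$ and $1$, then $f\in\mathcal{M}_c$.

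\textbf{Main obstacle.} The delicate point is the root $w=0$, where $\rho^{z/c}$ is undefined. If $a_0=1$, then $0$ is a root of $P$, and since $E$ is invertible on meromorphic functions, $E^N f_k=0$ forces $f_k=0$. So the zero root simply drops out, and I would handle it by first dividing $P$ by its power of $w$. A second, minor issue is checking that the $f_k$, and hence the $\pi_m$, are meromorphic of finite order. This is clear because each is a finite combination of shifts of $f$, and the $\pi_m$ are recovered from $f_k e^{-(z/c)\log\sigma_k}$ by finitely many applications of $E-1$.
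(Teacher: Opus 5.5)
Your argument is correct, and it takes a genuinely different route from the paper, which gives no proof of this lemma. The paper only points to Eqs.\ (4.19)--(4.24) of \cite{Ahamed-IJPAM-2022} and, in the proof of Theorem \ref{th2.2}, to the classical principle that the general solution of an order-$n$ linear difference equation ``involves $n$ periodic constants''. So within the paper, completeness of the listed forms (that \emph{every} meromorphic solution has that shape) is taken on citation. Your three steps actually prove it:
\begin{itemize}
\item conjugating $E-\rho$ into $\rho(E-1)$ by the zero-free entire factor $\rho^{z/c}$;
\item computing $\ker(E-1)^N=\{\sum_{m<N}z^m\pi_m:\pi_m\in\mathcal{M}_c\}$ through the triangular system with diagonal entries $mc$;
\item splitting $f$ into primary components by a B\'ezout identity in $\mathbb{C}[w]$ applied to the single operator $E$.
\end{itemize}
As by-products, the finite-order hypothesis is never used (it matters only for growth bounds on the $\pi_j$), and the root $w=0$ is disposed of cleanly by the invertibility of $E$. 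Two details are worth stating explicitly. First, $c\neq 0$ follows from $f(z+c)\not\equiv f(z)$, and it is needed both to define $\rho^{z/c}$ and to invert the entries $mc$. Second, the branch of $\log\rho$ is immaterial, since changing it multiplies $\rho^{z/c}$ by $e^{2\pi i kz/c}\in\mathcal{M}_c$.

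Your remark on the index range in (ii) is not cosmetic: the printed version is false, and the $m=0$ terms cannot be ``absorbed''. Take $a_2=1$, $a_1=-4$, $a_0=5$, so the characteristic polynomial is $(w-2)^2$. Then $f(z)=2^{z/c}$ satisfies $L_c^2(f)=f$ and $f(z+c)=2f(z)\not\equiv f(z)$. Yet writing $f=z\pi_1(z)2^{z/c}$ with $\pi_1\in\mathcal{M}_c$ would force $1/z\in\mathcal{M}_c$. The correct statement is the one your Step 3 proves: $f=\sum_k\sigma_k^{z/c}\sum_{m=0}^{N_k-1}z^m\pi_{k,m}$, with $k$ running over all nonzero roots, simple ones included. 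The printed (ii) also drops the simple roots when simple and multiple roots coexist. Similarly, the ``in particular'' clause holds only under your reading (all roots simple and lying in $\{0,1\}$). In that case $f\in\mathcal{M}_c$ contradicts the hypothesis $f(z+c)\not\equiv f(z)$, so the case is actually vacuous.
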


\begin{lem}\cite{Yan & Yi-2006}\label{lem2.8}
	Suppose that $ n\geq 2 $ and let $ f_1,\ldots,f_n $ be meromorphic functions and $ g_1,\ldots,g_n $ be entire functions such that \begin{enumerate}
		\item[(i)] $ \displaystyle\sum_{j=1}^{n}f_j\exp(g_j)\equiv 0 $;
		\item[(ii)] when $ 1\leq j<k\leq n $, $ g_j-g_k $ is not constant;\vspace{1.5mm}
		\item[(iii)] when $ 1\leq j\leq n $, $ 1\leq s<k\leq n $, 
		\begin{equation*}
			T(r,f_j)=o\{T(r,\exp\{g_s-g_k\})\} \; (r\rightarrow\infty, r\not\in E),
		\end{equation*}
		 where $ E\subset (1,\infty) $ has finite linear measure or finite logarithmic measure. Then $ f\equiv 0 $, $ j=1, 2, \ldots, n. $ 
	\end{enumerate}
\end{lem}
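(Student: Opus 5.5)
We argue by induction on $n$, in the spirit of the classical Borel theorem. Throughout, $E$ denotes a set of finite linear or logarithmic measure that may grow at each step. For $s\neq k$ put $T_{sk}(r):=T(r,\exp\{g_s-g_k\})$. Since $g_s-g_k$ is a non-constant entire function, $\exp\{g_s-g_k\}$ is transcendental, so $T_{sk}(r)/\log r\to\infty$.

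\textbf{Base case $n=2$.} Suppose $f_1e^{g_1}+f_2e^{g_2}\equiv 0$ and, say, $f_1\not\equiv 0$. Then $f_2\not\equiv0$ as well, and $e^{g_1-g_2}=-f_2/f_1$. This gives
\[
T_{12}(r)\le T(r,f_1)+T(r,f_2)+O(1)=o(T_{12}(r)),\qquad r\notin E .
\]
That is impossible, because $T_{12}(r)\to\infty$. Hence $f_1\equiv f_2\equiv 0$.

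\textbf{Inductive step.} Assume the statement for fewer than $n$ terms and suppose some $f_j\not\equiv 0$. Discard the identically vanishing $f_j$; the hypotheses (ii), (iii) still hold for the remaining indices. One nonzero term alone is impossible, because $e^{g}$ never vanishes. If at most $n-1$ terms remain, the induction hypothesis gives a contradiction. So we may assume all $f_j\not\equiv 0$. Dividing by $-f_ne^{g_n}$, we obtain
\[
\sum_{j=1}^{n-1}F_j\equiv -1,\qquad F_j:=\frac{f_j}{f_n}e^{g_j-g_n}.
\]
Differentiating kills the constant, and multiplying back by $f_ne^{g_n}$ yields an identity with only $n-1$ exponentials:
\[
\sum_{j=1}^{n-1}\psi_j f_j\, e^{g_j}\equiv 0,\qquad \psi_j:=\frac{f_j'}{f_j}-\frac{f_n'}{f_n}+g_j'-g_n'.
\]
The new exponents $g_1,\dots,g_{n-1}$ still satisfy (ii). The next step is to show that the new coefficients $\psi_jf_j$ satisfy (iii). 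Granting this, the induction hypothesis gives $\psi_j\equiv 0$ for all $j$, i.e. $F_j'\equiv 0$. Then $e^{g_j-g_n}=C_jf_n/f_j$ with $C_j\neq0$, and exactly as in the base case $T_{jn}(r)=o(T_{jn}(r))$, a contradiction.

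\textbf{Main obstacle: checking (iii) for $\psi_jf_j$.} We have
\[
T(r,\psi_jf_j)\le T(r,f_j)+m\bigl(r,f_j'/f_j\bigr)+m\bigl(r,f_n'/f_n\bigr)+T\bigl(r,g_j'-g_n'\bigr)+N(r,\psi_j)+O(1).
\]
\begin{itemize}
\item The term $T(r,f_j)$ is $o(T_{sk}(r))$ for every pair $s<k$ by hypothesis.
\item The poles of $\psi_j$ are simple and lie at zeros or poles of $f_j,f_n$. Hence $N(r,\psi_j)\le \overline N(r,f_j)+\overline N(r,1/f_j)+\overline N(r,f_n)+\overline N(r,1/f_n)$, which is again $o(T_{sk}(r))$.
\item By the logarithmic derivative lemma, $m(r,f_j'/f_j)=O(\log rT(r,f_j))$ outside $E$.
\item Since $g_j'-g_n'=(e^{g_j-g_n})'/e^{g_j-g_n}$ is entire, $T(r,g_j'-g_n')=m\bigl(r,g_j'-g_n'\bigr)=O(\log rT_{jn}(r))$ outside $E$.
\end{itemize}
The difficulty is that these logarithmic error terms are measured against the specific functions $f_j$ and $e^{g_j-g_n}$. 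Condition (iii), however, requires smallness against every $T_{sk}(r)$, and the different $T_{sk}$ need not be comparable.

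I would first try the finite-order situation, which is the one used in this paper. There all these error terms are $O(\log r)$, which is $o(T_{sk}(r))$ for every pair because each $e^{g_s-g_k}$ is transcendental. For the general case I would follow Yang--Yi. Introduce $T(r):=\max_{s<k}T_{sk}(r)$ and use the triangle inequality $T_{sk}\le T_{st}+T_{tk}+O(1)$ together with the Borel-type growth lemma. This should show that $O(\log rT(r))=o(\min_{s<k}T_{sk}(r))$ outside a set of finite measure, since $\log$ of the largest characteristic is negligible against the smallest transcendental one. Failing that, one can replace the derivative trick by Cartan's version of the second main theorem applied to the Wronskian of $F_1,\dots,F_{n-1}$.
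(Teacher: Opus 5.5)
The paper gives no proof of this lemma; it is quoted from Yang--Yi. Several parts of your argument are sound: the base case, the reduction to all $f_j\not\equiv 0$, the differentiated identity, and the final contradiction. The argument is also complete whenever all $g_j$ are polynomials, which covers every use in this paper. In that case $T(r,g_j'-g_n')=O(\log r)$. The terms $m(r,f_j'/f_j)=O(\log r+\log T(r,f_j))$ and $N(r,\psi_j)$ are $o(T_{sk}(r))$ for every pair in any case.

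\textbf{Gap 1: in the general case the reduced coefficients can genuinely violate (iii).} This is not a matter of sharper estimates. Take $g_1=0$, $g_2=z$, $g_3=e^z$.
\begin{itemize}
\item Then $T_{12}(r)=r/\pi$.
\item But $\psi_1=f_1'/f_1-f_3'/f_3-e^z$, and $T(r,f_1'/f_1-f_3'/f_3)=o(r)$.
\item Hence $T(r,\psi_1f_1)\ge(1-o(1))\,r/\pi$, so (iii) fails for the pair $(1,2)$ of the new identity.
\end{itemize}

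\textbf{Gap 2: your proposed repair is false.} With $T(r)=\max_{s<k}T_{sk}(r)$ as in your notation, the claim $O(\log(rT(r)))=o(\min_{s<k}T_{sk}(r))$ fails in the same example. There $\log T(r)\ge \log T(r,e^{e^z})+o(1)=r-\tfrac12\log r+O(1)$, while $\min_{s<k}T_{sk}(r)\le r/\pi$. The triangle inequality only makes the two largest $T_{sk}$ comparable; it says nothing about the smallest.

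\textbf{Changing the pivot does not help in general.} For $(g_1,\dots,g_4)=(0,z,e^z,e^z+z)$, every choice of the index $m$ you divide by leaves a remaining pair with $T_{sk}=r/\pi$. At the same time some $g_j'-g_m'$ equals $\pm e^z$ or $\pm(e^z\pm1)$, so some new coefficient has characteristic $\sim r/\pi$.

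\textbf{The missing idea.} Every error term must be measured against a single quantity, $T^{*}(r):=\max_{j}T(r,F_j)$. This is exactly the Wronskian route you list only as a fallback, and it has to be carried out.
\begin{enumerate}
\item By the induction hypothesis, $F_1,\dots,F_{n-1}$ are linearly independent. A relation $\sum c_jF_j\equiv 0$ multiplies back to $\sum c_jf_je^{g_j}\equiv 0$, whose coefficients satisfy (iii) trivially.
\item Nevanlinna's theorem applies to the linearly independent functions $-F_1,\dots,-F_{n-1}$, which sum to $1$. It gives, outside a set of finite measure,
\[
T(r,F_j)\le\sum_{k=1}^{n-1}N\Bigl(r,\frac{1}{F_k}\Bigr)+(n-2)\sum_{k=1}^{n-1}\overline{N}(r,F_k)+o(T^{*}(r)).
\]
The error term is $o(T^{*}(r))$ because $T^{*}(r)/\log r\to\infty$.
\item Every counting term is $O\bigl(\sum_i T(r,f_i)\bigr)$.
\item Since $T(r,F_j)\ge(1-o(1))\,T_{jn}(r)$, we get $T(r,f_i)=o(T^{*}(r))$.
\item For each $r$, choose an index with $T(r,F_j)=T^{*}(r)$. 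This yields $T^{*}(r)=o(T^{*}(r))$, a contradiction.
\end{enumerate}
This is essentially the Yang--Yi argument. It never compares different $T_{sk}$ with one another, which is precisely what your differentiation-and-induction scheme cannot avoid.
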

\section{\bf Proof of the main results}

\begin{proof}[\bf Proof of Theorem \ref{th1.2}]
	Since $ f $ is a finite order meromorphic function and $ {L}_{c}^n(f)+L_k[f] $ and $ f $ share the values $ a $ and $ \infty $ $ CM $, then we have 
	\begin{equation}
		\label{e4.1} \frac{{L}_{c}^n(f)+L_k[f]-a}{f(z)-a}=e^{\alpha(z)}, 
	\end{equation}
	where $ \alpha(z) $ is a polynomial, with 
	\begin{equation}
		\label{e4.2} \deg(\alpha)\leq\sigma(f).
	\end{equation}
	In fact, $ \deg(\alpha)<\sigma(f) $ since $ \sigma(f) $ is not an integer. It follows from (\ref{e4.1}) that
	\begin{equation}
		\label{e4.3} a_nf(z+nc)+\cdots+a_1f(z+c)+L_k[f]+\left(a_0-e^{\alpha(z)}\right)f(z)=a\left(1-e^{\alpha(z)}\right).
	\end{equation}
	\noindent Similarly, $ b $ and $ \infty $ $ CM $ being shared by $ {L}_{c}(f) $ and $ f $, we also have 
	\begin{equation}
		\label{e4.4} a_nf(z+nc)+\cdots+a_1f(z+c)+L_k[f]+\left(a_0-e^{\beta(z)}\right)f(z)=b\left(1-e^{\beta(z)}\right), 
	\end{equation}
	where $ \beta(z) $ is a polynomial and 
	\begin{equation}
		\label{e4.5} \deg(\beta)<\sigma(f). 
	\end{equation}
	\noindent Combining (\ref{e4.3}) and (\ref{e4.4}), we obtain
	\begin{equation*}
		\bigg(\left(a_0-e^{\alpha(z)}\right)-\left(a_0-e^{\beta(z)}\right)\bigg)f(z)= a\left(1-e^{\alpha(z)}\right)-b\left(1-e^{\beta(z)}\right)
	\end{equation*}
	which implies that 
	\begin{equation}
		\label{e4.6}  \bigg(e^{\beta(z)}-e^{\alpha(z)}\bigg)f(z)= a\left(1-e^{\alpha(z)}\right)-b\left(1-e^{\beta(z)}\right).
	\end{equation}
	\par We claim that $e^{\beta(z)} - e^{\alpha(z)} \equiv 0$. To the contrary, suppose that $e^{\beta(z)} - e^{\alpha(z)} \not\equiv 0$. Based on (\ref{e4.2}) and (\ref{e4.6}), it is clear that the order of growth of the left-hand side (L.H.S.) of (\ref{e4.6}) is $\sigma(f)$, whereas the order of growth of the right-hand side (R.H.S.) is strictly less than $\sigma(f)$. This yields a contradiction. Therefore, we must have $e^{\alpha(z)} \equiv e^{\beta(z)}$. Consequently, from (\ref{e4.6}), we can easily obtain
	\begin{equation}
		\label{e4.7} (a-b)\left(1-e^{\beta(z)}\right)=0.
	\end{equation}
	Since $a \neq b$, it follows from (\ref{e4.7}) that
	\begin{equation}
		\label{e4.8} e^{\beta(z)}=e^{\alpha(z)}=1.
	\end{equation}
	\par Therefore, by combining (\ref{e4.1}) and (\ref{e4.8}), we obtain $L^n_{\eta}(f) + L_k[f] \equiv f$. This completes the proof.
\end{proof}	
\begin{proof}[\bf Proof of Theorem \ref{th2.2}]
Since $ a $ is the finite Borel exceptional value of $ f $, by Hadamard's
factorization theory, $ f(z) $ can be expressed as
\begin{equation}
	\label{e3.1} f(z)={H}(z)e^{h(z)}+a ,
\end{equation}
where $ {H}(\not\equiv 0) $ is an entire function, $ h(z) $ is a polynomial, and $ {H} $ and $ h $ satisfy $ \lambda({H})=\sigma({H})=\lambda(f-a)<\sigma(f)=\deg(h) .$ Since $ {L}_{c}^n(f) $ and $ f $ share the value $ a $ $ CM $, then we have
\begin{align}
	\label{e3.2} \frac{{L}_{c}^n(f)-a}{f-a}&= \frac{a_n{H}(z+nc)e^{h(z+nc)}+\cdots+a_1{H}(z+c)e^{h(z+c)}+a_0f(z)-a}{f-a}\\&=\nonumber e^{p(z)},
\end{align}
where $ p(z) $ is a polynomial in $ z $. Set
\begin{eqnarray}
	\label{e3.3} h(z):=\alpha_kz^k+\cdots+\alpha_0\;\; \text{and}\;\; p(z)=b_lz^l+\ldots+b_0,
\end{eqnarray}
where $ k=\sigma(f) $, $ l=\deg p $, $ \alpha_k(\neq 0), \ldots, \alpha_0 $, $ b_l(\neq 0), \ldots, b_0 $ are all constants.\vspace{1.5mm}

\noindent From (\ref{e3.2}), it follows that $\deg p \leq \deg h$. To complete the proof, it remains to establish two points: first, that $p(z)$ is a constant, and second, that $a = 0$..\vspace{1.5mm}

\noindent {\bfseries{Step I:}} We show that $ p(z) $ is constant, \emph{i.e.,} $ l=0 $.\vspace{1.5mm}

\noindent{\bfseries{Case 1.}} We suppose that $ 1\leq l<k $. If $ a\neq 0 $, then from (\ref{e3.2}), we obtain
\begin{align}
	\label{e3.4} & a_n{H}(z+nc)e^{h(z+nc)-h(z)}+\cdots+a_1{H}(z+c)e^{h(z+c)-h(z)}+a_0{H}(z)-{H}(z)e^{p(z)}\\&=\nonumber -2ae^{-h(z)}. 
\end{align}
A simple computation shows that
\begin{align*}
	 \deg\left[h(z+jc)-h(z)\right]&=k-1\; \mbox{for}\; j=1, 2, \ldots, n,\\ \deg h(z)&=k,\; \deg p(z)=l<k,
\end{align*}
 \par Therefore, we obtain $\sigma(H) < k$. It follows that the order of growth of the left-hand side (L.H.S.) of (\ref{e3.4}) is less than $k$, whereas the order of the right-hand side (R.H.S.) is exactly $k$. This contradiction implies that $a=0$. Consequently, equation (\ref{e3.4}) reduces to
\begin{equation}
	\label{e3.5} a_n\frac{{H}(z+nc)}{{H}(z)}e^{h(z+nc)-h(z)}+\cdots+a_1\frac{{H}(z+c)}{{H}(z)}e^{h(z+c)-h(z)}+a_0 = e^{p(z)}. 
\end{equation}
\par It follows from (\ref{e3.5}) that $H(z+nc)/H(z), \ldots, H(z+c)/H(z)$ are all non-zero entire functions. Let $\sigma(H) = \sigma_1$. It is clear that $\sigma_1 < \sigma(f) = k$. By Lemma \ref{lem22.6}, for any given $\epsilon$ satisfying $0 < 3\epsilon < k - \sigma_1$, there exists a set $E \subset (1, \infty)$ of finite logarithmic measure such that for all $z$ with $|z| = r \notin [0, 1] \cup E$,
\begin{align}
	\label{e3.6} \exp\bigg(-r^{\sigma_1-1+\epsilon}\bigg)\leq \bigg|\frac{f(z+jc)}{f(z)}\bigg|\leq \exp\bigg(r^{\sigma_1-1+\epsilon}\bigg)\; \mbox{for}\; j=1, 2, \ldots, n. 
\end{align}
Since the functions $H(z+jc)/H(z)$ are entire for $j=1, 2, \ldots, n$, it follows from (\ref{e3.6}) that
\begin{align*}
	T\left(r, \frac{H(z+jc)}{H(z)} \right) = m\left(r, \frac{H(z+jc)}{H(z)} \right) \leq r^{\sigma_1-1+\epsilon}.
\end{align*}
This implies that $\sigma(H(z+jc)/H(z)) \leq \sigma_1-1+\epsilon < k-1$. Given $s < k$, it follows that $\deg p(z) \leq k-1$. If we assume $\deg p(z) < k-1$, then clearly $\deg(h(z+jc)-h(z)) = k-1$ for all $j=1, 2, \ldots, n$. Consequently, the order of growth of the left-hand side (L.H.S.) of (\ref{e3.5}) is $k-1$, whereas the growth of the right-hand side (R.H.S.) is $\deg p(z) < k-1$. This discrepancy yields a contradiction.\vspace{1.5mm}

\par If $\deg p(z) = k-1$, then since $H(z+jc)/H(z)$ are entire and $\deg(h(z+jc)-h(z)) = k-1$, it follows that the functions $(H(z+jc)/H(z))e^{h(z+jc)-h(z)}$ have $0$ as a Borel exceptional value for each $j=1, \dots, n$. However, based on (\ref{e3.5}), it can be seen that $0$ is not a Borel exceptional value of the left-hand side (L.H.S.), whereas it is necessarily a Borel exceptional value of the right-hand side (R.H.S.). This leads to a contradiction.\vspace{1.5mm}

\noindent{\bfseries{Case 2.}} Suppose that $1 \leq l \leq k$. Regarding $p$ and $h$, we distinguish the following three cases: (i) $b_k = a_k$; (ii) $b_k = -a_k$; and (iii) $b_k \neq \pm a_k$.\vspace{1.5mm}

\noindent{\bfseries{Subcase 2.1.}} Let $ b_k=a_k $. First we suppose that $ a\neq 0 $. Thus (\ref{e3.2}) is expressed as 
\begin{equation*}
	a_n{H}(z+nc)e^{h(z+nc)}+\cdots+a_1{H}(z+c)e^{h(z+c)}+a_0{H}(z)e^{h(z)}+2a={H}(z)e^{h(z)}e^{p(z)}, 
\end{equation*}
and this can be re-written as 
\begin{equation}
	\label{e3.8} g_1(z)e^{p(z)}+g_2(z)e^{-p(z)}+g_3(z)e^{h_0(z)}\equiv 0, 
\end{equation}
where 
\[
\begin{cases}
	g_1(z)=-{H}(z)\\
	g_2(z)=2a\\
	g_3(z)=a_n{H}(z+2c)e^{h(z+nc)-h(z)}+\cdots+a_1{H}(z+c)e^{h(z+c)-h(z)}+a_0{H}(z)\\ h_0(z)=0
\end{cases}
\]
\par We observe that $\deg\{h(z+tc)-h(z)\} = k-1$ for $t=1, 2, \ldots, n$. Assuming $\sigma(H) < k$, it is easily verified that $\sigma(g_j) < k$ for $j=1, 2, 3$. Since $b_k = a_k$, it follows that $\deg(p+h) = k$, $\deg(p-h_0) = k$, and $\deg(h-h_0) = k$. Given that the functions $e^{p+h}$, $e^{p-h_0}$, and $e^{-h-h_0}$ are of regular growth with order $k$, and since $\sigma(g_j) < k$ for $j=1, 2, 3$, a simple computation shows that
\begin{align}	\label{e3.9}
	\begin{cases}
	 T(r,g_1)=o\bigg(T\left(r,e^{p-(-h)}\right)\bigg),\vspace{1.2mm}\\ T(r,g_2)=o\bigg(T\left(r,e^{p-h_0}\right)\bigg),\vspace{1.2mm}\\ T(r,g_3)=o\bigg(T\left(r,e^{h-h_0}\right)\bigg).
	\end{cases}
\end{align}\par Applying Lemma \ref{lem2.8} to (\ref{e3.9}) and using (\ref{e3.8}), we obtain $g_j(z) \equiv 0$ for $j = 1, 2, 3$, which is a contradiction. Next, we suppose that $a=0$. It then follows from (\ref{e3.2}) that 
\begin{align}
\label{e3.10}{H}(z)e^{p(z)}= a_n{H}(z+nc)e^{h(z+nc)-h(z)}+\cdots+a_1{H}(z+c)e^{h(z+c)-h(z)}+a_0{H}(z).
\end{align}
Since $H \not\equiv 0$ with $\sigma(H) < k$, $\deg p = k$, and $\deg(h(z+tc) - h(z)) = k-1$ for $t=1, 2$, it follows that the order of growth of the left-hand side (L.H.S.) of (\ref{e3.10}) is exactly $k$. However, the order of growth of the right-hand side (R.H.S.) is strictly less than $k$, which leads to a contradiction.\vspace{1.5mm}

\noindent{\bfseries{Subcase 2.2.}} Let $ b_k=-a_k $. First we assume that $ a\neq 0 $. Then from (\ref{e3.2}), we obtain 
\begin{align}
\label{e3.11} & \bigg(-2a+{H}(z)e^{p(z)+h(z)}\bigg)e^{-h(z)}\\&= a_n{H}(z+nc)e^{h(z+n\eta)-h(z)}+\cdots+a_1{H}(z+c)e^{h(z+c)-h(z)}+a_0{H}(z).\nonumber 
\end{align}
\par We affirm that $ -2a+{H}(z)e^{p(z)+h(z)}\not\equiv 0 $. In fact, if $ -2a+{H}(z)e^{p(z)+h(z)}\equiv 0$, then by (\ref{e3.11}), we obtain
\begin{equation*}
	a_n{H}(z+nc)e^{h(z+nc)-h(z)}+\cdots+a_1{H}(z+c)e^{h(z+c)-h(z)}+a_0{H}(z)\equiv 0,
\end{equation*}
which can be written as
\begin{equation}
	\label{e3.12} a_n{H}(z+nc)e^{h(z+nc)}+\cdots+a_1{H}(z+c)e^{h(z+c)}+a_0{H}(z)e^{h(z)}\equiv 0. 
\end{equation}
Therefore, from (\ref{e3.12}), we obtain
\begin{equation*}
	a_nf(z+nc)+\cdots+a_1f(z+c)+a_0f(z)=a(a_n+\cdots+a_1+a_0),
\end{equation*}
which implies that $L^n_{\eta}(f)$ is a constant. This contradicts the assumption that $L^n_{\eta}(f)$ is non-constant. Consequently, we must have $\deg(p+h)\leq k-1$. Given that $\deg(-h)=k$, $\deg(h(z+tc)-h(z))=k-1$ for $t=1, \dots, n$, and $\sigma(\mathcal{H})<k$, it follows from (\ref{e3.12}) that the order of growth of the left-hand side (L.H.S.) is $k$, while the order of growth of the right-hand side (R.H.S.) is strictly less than $k$. This discrepancy yields a contradiction.\vspace{1.5mm}

\par  Assuming $a=0$, we can rewrite (\ref{e3.2}) in the form of (\ref{e3.10}). Following the same argument as in the proof of Subcase 2.1, we arrive at a contradiction.\vspace{1.5mm}

\noindent{\bfseries{Subcase 2.3.}} Suppose $b_k \neq \pm a_k$. We first consider the case $a \neq 0$, where (\ref{e3.2}) can be rewritten as (\ref{e3.8}). Since $b_k \neq a_k$ and $b_k \neq -a_k$, it follows that $\deg(p+h) = k$, $\deg(p-h) = k$, and $\deg(h+h_0) = k$. Applying the same argument as in Subcase 2.1, we deduce that $g_j(z) \equiv 0$ for $j=1, 2, 3$, which yields a contradiction.\vspace{1.5mm}

\par Assuming $a=0$, we can rewrite (\ref{e3.2}) in the form of (\ref{e3.10}). Following the same argument as in Subcase 2.1, we arrive at a contradiction. Consequently, the polynomial $p(z)$ must be a constant. We thus obtain 
\begin{equation}
	\label{e3.13} \frac{{L}^n_{c}(f)-a}{f-a}={A},
\end{equation}
where $ {A} $ is a non-zero constant. \vspace{1.5mm}

\noindent{\bf Step II:} We prove that the actual value of the constant $ a $ is zero. Suppose $ a\neq 0 $. Therefore, by (\ref{e3.1}) and (\ref{e3.13}), we deduce that 
\begin{align}
	\label{e3.14} & a_n{H}(z+nc)e^{h(z+nc)-h(z)}+\cdots+a_1{H}(z+c)e^{h(z+c)-h(z)}+(a_0-{A}){H}\\ &=\nonumber  a(1-a_0-a_1-\cdots-a_n)e^{-h(z)}. 
\end{align}
\par Since $\deg(h(z+tc)-h(z)) = k-1$ for $t=1, 2, \ldots, n$, $\sigma(H) < k$, and $\deg(h) = k$, it follows that the order of growth of the left-hand side (L.H.S.) of (\ref{e3.14}) is strictly less than $k$, whereas the order of growth of the right-hand side (R.H.S.) is exactly $k$. This yields a contradiction.\vspace{1.5mm}

\par Hence,  we deduce that $ a=0 $, and it follows from (\ref{e3.13}) that $ {L}^n_{c}(f)={A} f $ which can be written as   
\begin{align}
	\label{e3.15} a_nf(z+n\eta
	)+\cdots+a_1f(z+\eta)+a_0f(z)={A}f(z). 
\end{align}
\par Since $L^n_{\eta}(f)$ and $f$ share the value $a$ CM, it follows from Lemma \ref{lem2.5} that $f$ cannot be a rational function. Therefore, $f$ must be a transcendental meromorphic function of finite order, as must be the function $L^n_{\eta}(f)$. It remains only to determine the general solution of the difference equation (\ref{e3.15}) under the stated assumptions. The general solution of a linear difference equation of order $n$ is a solution involving $n$ periodic constants. Note that equation (\ref{e3.15}) is homogeneous with constant coefficients $a_0, a_1, \ldots, a_n$. The solutions can be obtained using Lemma \ref{lem-3.7}, with a slight modification to the argument found in \cite{Ahamed-IJPAM-2022}; hence, we omit the details.\vspace{1.5mm}
\end{proof}
\begin{proof}[\bf Proof of Theorem \ref{th1.05}]
Since $f$ is a transcendental entire function of finite order, it follows from Lemma \ref{lem2.3} that $L^n_{\eta}(f)$ is also a transcendental entire function of finite order. Furthermore, since $L^n_{\eta}(f)$ and $f$ share the value $c$ CM, we have
\begin{equation}
	\label{e3.17} \frac{ L^n_{\eta}(f)-c}{f(z)-c}=e^{Q(z)},
\end{equation}
where $Q(z)$ is a polynomial. We aim to show that $Q(z)$ is a constant. To the contrary, suppose that $\deg(Q) \geq 1$. Differentiating both sides of (\ref{e3.17}), we obtain 
\begin{equation}
	\label{e3.18} {Q}^{\prime}(z)=\frac{\left({L}^n_{\eta}(f)\right)^{\prime}}{{L}^n_{\eta}(f)-c}-\frac{f^{\prime}(z)}{f(z)-c}.
\end{equation}
It follows from (\ref{e3.18}) that $ m\left(r,{Q}^{\prime}(z)\right)=S(r,f). $ Re-writing (\ref{e3.18}), we obtain
\begin{equation}
	\label{e3.19} {Q}^{\prime}(z)=(f(z)-\zeta)\bigg(\frac{1}{f(z)-\zeta}\frac{\left({L}^n_{\eta}(f)\right)^{\prime}}{{L}^2_{\eta}(f)-c}-\frac{1}{f(z)-\zeta}\frac{f^{\prime}(z)}{f(z)-c}.\bigg). 
\end{equation}
From (\ref{e3.18}), we obtain 
\begin{equation}
	\label{e3.20} {Q}^{\prime}(z)=\frac{f(z)-\zeta}{c}{F}-\frac{f(z)-\zeta}{c-\zeta}{G},
\end{equation}
where 
\begin{equation}
	\label{e3.21} {F}=\frac{\left({L}^n_{\eta}(f)\right)^{\prime}}{f(z)-\zeta}\frac{{L}^n_{\eta}(f)}{{L}^n_{\eta}(f)-c}-\frac{\left({L}^n_{\eta}(f)\right)^{\prime}}{f(z)-\zeta}.
\end{equation}
\begin{equation}
	\label{e3.22} {G}=\frac{f^{\prime}(z)}{f(z)-\zeta}-\frac{f^{\prime}(z)}{f(z)-c}.
\end{equation} 
Since $ c\neq \zeta $, it follows from (\ref{e3.22}) that $ {G}\not\equiv 0 $. Note that $ {Q}^{\prime}(z)\not\equiv 0 $, and therefore, by (\ref{e3.20}), we obtain 
\begin{equation}
	\label{e3.23} \frac{1}{f(z)-\zeta}=\frac{(\zeta-c)\;{F}-c\;{G}}{c(\zeta-c){Q}^{\prime}(z)}.
\end{equation}
Since $ {Q}^{\prime}(z) $ is a polynomial, we obtain 
\begin{equation}
	\label{e3.24} m\left(r,\frac{1}{f(z)-\zeta}\right)\leq m(r,{F})+m\left(r,{G}\right)+S(r,f). 
\end{equation}
Since $ a_2+\cdots+a_1+a_0=0 $, employing {Lemma \ref{lem2.5}}, we obtain 
\begin{align*}
	m\left(r,{F}\right)&\leq m\left(r,\frac{{L}^n_{\eta}(f)}{f(z)-\zeta}\right)+m\left(r,\frac{\left({L}^n_{\eta}(f)\right)^{\prime}}{{L}^2_{\eta}(f)-c}\right)+m\left(r,\frac{\left({L}^n_{\eta}(f)\right)^{\prime}}{f(z)-\zeta}\right)+O(1)\\&= m\left(r,\frac{{L}^n_{\eta}(f)}{f(z)-\zeta}\right)+S(r,f)\\&=m\left(r,a_n\frac{f(z+n\eta)}{f(z)-\zeta}+\cdots+a_1\frac{f(z+\eta)}{f(z)-\zeta}+a_0\frac{f(z)}{f(z)-\zeta}\right)+S(r,f)\\&=m\left(r,a_n\frac{f(z+n\eta)-\zeta}{f(z)-\zeta}+\cdots+a_1\frac{f(z+\eta)-\zeta}{f(z)-\zeta}+a_0\frac{f(z)-\zeta}{f(z)-\zeta}\right)+S(r,f)\\&\leq  m\left(r,\frac{f(z+n\eta)-\zeta}{f(z)-\zeta}\right)+\cdots+m\left(r,\frac{f(z+\eta)-\zeta}{f(z)-\zeta}\right)+m\left(r,\frac{f(z)-\zeta}{f(z)-\zeta}\right)\\&\quad+S(r,f)\\&\leq S(r,f).
\end{align*}
\noindent It is also easy to see that

 \begin{align*}
	m\left(r,{G}\right)&= m\left(r,\frac{f^{\prime}(z)}{f(z)-\zeta}-\frac{f^{\prime}(z)}{f(z)-c}\right)\\ &\leq m\left(r,\frac{f^{\prime}(z)}{f(z)-\zeta}\right)+m\left(r,\frac{f^{\prime}(z)}{f(z)-c}\right)+O(1) \\&\leq S(r,f).
\end{align*}
Thus, we obtain
\begin{equation*}
	m\left(r,\frac{1}{f(z)-\zeta}\right)=S(r,f),
\end{equation*}
which implies that $ \delta(\zeta;f)=0 $. Therefore, $\zeta$ cannot be a Nevanlinna exceptional value of $f(z)$, which contradicts our assumption. \vspace{1.5mm}

\noindent Therefore, we conclude that $Q(z)$ is a constant. Let $e^{Q(z)} = B$, where $B \in \mathbb{C} \setminus \{0\}$. From (\ref{e3.17}), we obtain
\begin{equation*}
	\frac{L^n_{\eta}(f) - c}{f(z) - c} = B.
\end{equation*}
It remains to show that the constant $c$ must be zero. To the contrary, suppose that $c \neq 0$. Following the same argument as in Step II, we arrive at a contradiction. Thus, we must have $c = 0$, which implies $L^n_{\eta}(f) \equiv Bf$. By applying the same method for finding the solution as in the proof of Theorem \ref{th2.2} (see \cite{Ahamed-IJPAM-2022}), we obtain the desired result. This completes the proof.
\end{proof}	

\section{\bf Concluding remarks}
\noindent The difference equation $ {L}^n_{\eta}(f)\equiv {A}f $ can be written as 
\begin{equation}
	\label{e6.1} \sum_{j=0}^{2}b_jf(z+j\eta)\equiv 0,
\end{equation}
 where $ b_n=a_n $, $ \ldots $, $ b_1=a_1 $ and $ b_0=a_0-{A} $ are constants.Since (\ref{e6.1}) is a homogeneous difference equation, it follows that the solution $f$ cannot be a rational function; thus, $\sigma(f) \geq 1$. As established previously, the meromorphic solutions of the difference equation $L^n_{\eta}(f) \equiv Af$ can be obtained explicitly. \vspace{1.5mm}
 
 \par  However, investigating non-trivial solutions for the difference equation (\ref{e6.1}) becomes more complex in the non-homogeneous case, where the equation takes the form
 \begin{equation}
 	\label{e6.2} \sum_{j=0}^{n}b_j(z)f(z+j\eta)\equiv b(z),
 \end{equation}
  where the coefficients $ b_0(z), b_1(z) $, $ \ldots $, $ b_n(z) $ and $ b(z) $ are all  polynomials. 
\begin{rem}
	It is evident that when the coefficients are polynomials, equation (\ref{e6.2}) admits rational solutions for the case $n=2$.
\end{rem}
\begin{exm}
	The equation 
	\begin{equation*}
		(z+3)f(z+2)+(z+2)^2f(z+1)+(z^2-1)f(z)=2z^2+3z+4
	\end{equation*}
	 has a rational solution $ f(z)={z}/{(z+1)}. $
\end{exm}
\begin{rem}
	For $ n=2 $, transcendental meromorphic solution of the equation (\ref{e6.2}) also exists in both the case when $ b(z)=0 $ as well as $ b(z)\neq 0 $.
\end{rem}
\begin{exm}
	The equation 
	\begin{equation*}
		f(z+2)+z^2f(z+1)-(z^2+1)f(z)=0 
	\end{equation*}
	 has a solution $ f(z)=\tan (\pi z) $.
\end{exm}
\begin{exm}
	The equation 
	\begin{equation*}
		f(z+2)+z^2f(z+1)-(z^2+1)f(z)=z^2+2 
	\end{equation*}
	 has a solution $ f(z)=\tan (\pi z)+z $.
\end{exm}
In $1997$, Ozawa \cite{Oza & KMJ - 1997} demonstrated that for any given order $\sigma \in [1, \infty]$, there exists an entire function $g(z)$ with period $1$ such that $\sigma(g) = \sigma$. Notably, if $\sigma \notin \mathbb{N}$, then the order and the exponent of convergence of the zeros coincide, \textit{i.e.,} $\sigma(g) = \lambda(g) = \sigma$. This result enables us to construct specific examples for \eqref{e6.2} that demonstrate the sharpness of the growth order of its solutions.
\begin{exm}
	Let $ f(z)=e^z+1 $ is of order $ \sigma(f)=\lambda(f)=1 $, and solves the equation 
	\begin{equation*}
		z^2f(z+2)-(ez^2+1)f(z+1)-ef(z)=z^2-\left(ez^2+1\right)-e,
	\end{equation*}
	 while the gamma function $ \Gamma(z) $ is of order $ \sigma(\Gamma)=\lambda\left(1/\Gamma\right)=1 $, and solves the equation 
	 \begin{equation*}
	 	\Gamma(z+1)-z\Gamma(z)=0.
	 \end{equation*}
\end{exm} 
\begin{exm}
	Let $ f_1(z)=e^z $ and $ f_2(z)=g(z)e^z $, where $ g(z) $ is a periodic function with period $ 1 $ such that $ \sigma(g)=\lambda(g)=\sigma\in (1,2) $, are solutions of the equation 
	\begin{equation*}
		z^2f(z+2)-(ez^2+1)f(z+1)-ef(z)\equiv 0.
	\end{equation*}
\end{exm}
\begin{rem}
	Even when some coefficients are transcendental entire functions, one can still find solutions of finite order for (\ref{e6.1}).
\end{rem}
\begin{exm}
We consider the functions $ f_1 $ and $ f_2 $, where $ f_1(z)=\exp\left({(z^2-1)}/{2}\right) $ is of order $ \sigma(f_1)=2 $, and $ f_2(z)=\exp\left({(z^2-1)}/{2}\right)\sin(2\pi z) $ is of order $ \sigma(f_2)=2 $. A simple computation shows that $ f_i\; (i=1,\; 2) $ solve the equation
\begin{equation*}
	f(z+1)+e^zf(z)=0.
\end{equation*}
\end{exm}
\par The above discussions motivate us to raise the following question.
\begin{ques}
Is it possible to extend Theorem \ref{th2.2} and Theorem \ref{th1.05} by considering the operator $L^n_{c}(f)$ with coefficients that are either polynomials or transcendental entire functions? 
\end{ques} 
The method employed in the proof of Theorem \ref{th1.05} makes it clear that the estimate
\begin{equation*}
	m\left(r,\frac{L^n_{c}(f)}{f(z)-\zeta}\right) = S(r,f)
\end{equation*}
holds provided that $\sum_{i=0}^{n} a_i = 0$. This observation leads to a natural question.
\begin{ques}
Keeping the other conditions intact, is it possible to prove Theorem \ref{th1.05} in the case where $\sum_{i=0}^{n} a_i \neq 0$?
\end{ques}
\vspace{5mm}

\noindent\textbf{Author Contributions Statement:}
Both authors contributed equally to the conception and design of this study. Each author has reviewed and approved the submitted version of the manuscript.\\

\noindent\textbf{ Compliance of Ethical Standards:}\\
 
\noindent\textbf{ Conflict of interest:} The authors declare that there are no competing interests associated with the publication of this manuscript. \\

\noindent\textbf{ Data availability statement:}  Data sharing not applicable to this article as no datasets were generated or analysed during the current study.


\begin{thebibliography}{99}
	
	\bibitem{Aha & RM & 2019} {\sc M. B. Ahamed}, {An investigation on the conjecture of Chen and Yi}, \textit{Results Math.} \textbf{74}(2019): 122. 
	
	\bibitem{Ahamed-IJPAM-2022} {\sc M. B. Ahamed,} The class of meromorphic functions sharing values with their difference polynomials, \textit{Indian J. Pure Appl. Math.} \textbf{54}(2023), 1158-1169.
	
	\bibitem{Abl & Hal & Her 2000} {\sc M. J. Ablowitz}, {\sc R. G. Halburd} and {\sc B. Herbst}, {On the extension of the Painleve property to difference equations}, 
	\textit{Nonlinearlty}  \textbf{13}(2000), 889-905. 
	
	\bibitem{Ban & Aha & MS & 2019} {\sc A. Banerjee} and {\sc M. B. Ahamed}, {Uniqueness of meromorphic function with its shift operator under the purview of two or three shared sets}, \textit{Math. Slovaca} \textbf{69}(3)(2019), 557-572.
	
	\bibitem{Ban & Aha & JCMA & 2020} {\sc A. Banerjee} and {\sc  M. B. Ahamed,} {Results on meromorphic function sharing two sets with its linear c-difference operator}, \textit{J. Contemp. Math. Anal.} \textbf{55}(3), 143-155 (2020).
	
	\bibitem{Ban & Kau - 1976} {\sc S. B. Bank} and {\sc R. P. Kaufman}, {An extension of H$ \rm\ddot{o} $lders theorem concerning the gamma function}, 
	\textit{Funkcial. Ekvac.} \textbf{19}(1)(1976), 53-63.
	
	\bibitem{Ber & Lan - 2007} {\sc W. Bergweiler} and {\sc J. K. Langley}, {Zeros of difference of meromorphic functions}, 
	\textit{Math. Proc. Cambridge Philos. Soc.} \textbf{142}(2007), 133-147.
	
	\bibitem{Che - 2011} {\sc Z. X. Chen}, {Some results on difference Riccati equations}, \textit{Acta Math. Sin. Eng. Ser.} \textbf{27}(2011), 1091-1100. 
	
	\bibitem{Chen JMMA 2013} {\sc Z. X. Chen}, {On properties of meromorphic solutions for difference equations concerning Gamma function}, \textit{J. Math. Anal. Appl.} \textbf{406}(2013), 147-157.
	
	\bibitem{Che & Yi-RM-2013} {\sc Z. X. Chen} and {\sc H. X. Yi}, {On sharing values of meromorphic functions and their differences}, \textit{Results Math.} \textbf{63}(2013), 557-565.
	
	\bibitem{Chi & Fen-RM-2008} {\sc Y. M. Chiang} and {\sc S. J. Feng}, {On the Nevanlinna characteristic of $ f(z+c) $ and difference equation in the complex plane,} \textit{Ramanujan J.} \textbf{16}(2008), 105-129.
	
	\bibitem{Gun - TAMS-1983} {\sc G. Gundersen}, {Meromorphic function share four values}, \textit{Trans. Amer. Math. Soc.} \textbf{277}(1983), 545-567.
	
	\bibitem{Gun - TAMS-1987} {\sc G. Gundersen}, {Correction to meromorphic functions that share four values}, \textit{Trans. Amer. Math. Soc.} \textbf{304}(1987), 847-850. 
	
	\bibitem{Hal & Kor-AASFM-2006} {\sc R. G. Halburd} and {\sc R. J. Korhonen}, {Nevanlinna theory for the difference operator},\textit{ Ann. Acad. Sci. Fenn. Math.} \textbf{31}(2006), 463-478.
	
	\bibitem{Hal & Kor - JPA-2007} {\sc R. G. Halburd} and {\sc R. J. Korhonen}, {Meromorphic solution of difference equations, integrability and the discrete}, \textit{J. Phys. A.} \textbf{40}(2007), 1-38.
	
	\bibitem{H & K & T - TAMS -2014} {\sc R. G. Halburd}, {\sc R. Korhonen}, and {\sc K. Tohge}, {Holomorphic
		curves with shift-invariant hyperplane preimages}, \textit{Trans.
	Amer. Math. Soc.} \textbf{366}(2014), 4267–4298.
	
	\bibitem{Hay-1964} {\sc W. K. Hayman}, {Meromorphic functions}, Oxford: \textit{Clarendon Press}, 1964.
	
	\bibitem{Hei & Kor & Lai -JMMA-2009} {\sc J. Heittokangas}, {\sc R. J. Korhonen}, {\sc I. Laine} and {\sc J. Rieppo}, {Value sharing results for shifts of meromorphic functions, and sufficient condition for periodicity}, \textit{J. Math. Anal. Appl.} \textbf{355}(2009), 352-363.
	
	\bibitem{Risto-TAMS-2020} {\sc Z. J. Hua} and {\sc R. Korhonen}, Studies of differences from the point of view of Nevanlinna theory, {\it Trans. Amer. Math. Soc.} \textbf{373}(6)(2020), 4285-4318.
	
	\bibitem{Jia & Chen 2013} {\sc Y. Y. Jiang} and {\sc Z. X. Chen}, {On solutions of q-difference Riccati equations with rational coefficients}, \textit{Appl. Anal. Discrete Math.} \textbf{7}(2013), 314–326.
	
	\bibitem{Li-Hao-Yi-MJM-2021} {\sc X. M. Li, C. S. Hao} and {\sc H. X. Yi}, On the growth of meromorphic solutions of certain nonlinear difference equations, {\it Mediterr. J. Math.} 18: 56 (2021).
	
	\bibitem{Liu -RM - 2010} {\sc K. Liu}, {Zeros of difference polynomials of meromorphic functions}, \textit{Results Math.} \textbf{57}(2010), 365-376.
	
	\bibitem{Liu & Yan - AM - 2009} {\sc K. Liu} and {\sc L. Z. Yang},  {Value distribution of the difference operator}, \textit{Arch. Math.} \textbf{92}(2009), 270-278.
	
	\bibitem{Lu & Lu - CMFT-2018} {\sc F. L\emph{$ \rm \ddot{U} $}} and {\sc W. L\emph{$ \rm\ddot{U} $}}, {Meromorphic functions sharing three values with their difference operators}, \textit{Comput. Methods. Funct. Theory.} {\bf 17} (2017),  395--403.
	
	\bibitem{Mallick-AAMP-2022} {\sc S. Mallick} and {\sc M. B. Ahamed},  On uniqueness of a meromorphic function and its higher difference operators sharing two sets, \textit{Anal. Math. Phys.} 12, 78(2022). https://doi.org/10.1007/s13324-022-00668-8.
	
	\bibitem{Moh-FFA-1971} {\sc A. Z. Mohonko}, {The Nevanlinna characteristics of certain meromorphic functions,} \textit{Teor. Funktsii Funktsional. Anal. i Prilozhen}, \textbf{14}(1971), 83-87 (Russian).
	
	\bibitem{Mues-2011} {\sc E. Mues}, {Meromorphic functions sharing four values}, \textit{Complex Var. Theory Appl.} \textbf{12}(1989), 169-174.
	
	\bibitem{Navan-1929} {\sc R. Nevanlinna}, {Le Th$ {\rm \acute{e}} $or$ {\rm \acute{e}} $me de Picard-Borel et la Th$ {\rm \acute{e}} $orie des Fonctions M$ {\rm \acute{e}} $romorphes}, \textit{Gauthier-Villars}, Paris (1929).
	
	\bibitem{Oza & KMJ - 1997} {\sc M. Ozawa}, {On the existence of prime periodic entire functions}, {\it Kodai Math. Sem. Rep.} \textbf{29}(1977/78)(3), 308-321.
	
	\bibitem{Rub & Yan -LN - 1977}{\sc  L. Rubel} and {\sc C. C. Yang}, {Values shared by an entire function and its derivative}, {\it  Lecture Notes in Mathematics} \textbf{599}(1977), 101-103, Berlin, Springer-Verlag.
	
	\bibitem{Shi - 1981} {\sc S. Shimomura}, {Entire solutions of a polynomial difference equations,} {\it J. Fac. Sci. Univ. Tokyo Sect. IA Math.} \textbf{28}(2)(1981), 253-266.
	
	\bibitem{Val-BSM-59} {\sc G. Valiron}, {Sur la d$\acute{e}$riv$\acute{e}$e des fonctions alg$\acute{e}$bro$\ddot{i}$des}, {\it Bull. Soc. Math. France} \textbf{59}(1931), 17-39.
	
	\bibitem{Wang JMMA 2011} {\sc J. Wang}, {Growth and poles of meromorphic solutions of some difference equations}, \textit{J. Math. Anal. Appl.} \textbf{379}(2011), 367–377.
	
	\bibitem{Whit - 1935} {\sc J. M. Whittaker}, {Interpolatory Function Theory}, \textit{Cambridge University Press}, Cambridge, 1935.
	
	\bibitem{Func Ekvac - 1980} {\sc N. Yanagihara}, {Meromorphic solutions of some difference equations}, \textit{Funkcial Ekvac}, \textbf{23}(3)(1980), 309-326.
	
	\bibitem{Yan & Yi-2003} {\sc C. C. Yang} and {\sc H. X. Yi}, {Uniqueness theory of meromorphic functions,} \textit{Kluwer Academic Publishing Group}, Dordrecht (2003).
	
	\bibitem{Yan & Yi-2006} {\sc C. C. Yang} and {\sc H. X. Yi}, {Uniqueness theory of meromorphic functions}, \textit{Beijing: Science Press}, 2006.
	
	\bibitem{Yan - 1993} {\sc L. Yang}, {Value distribution theory}, Berlin: \textit{Springer-Verlag : Science Press}, 1993.
	
	\bibitem{Zha & Lia-2014} {\sc  J. Zhang} and {\sc L. W. Liao}, {Entire function sharing some values with their difference operators}, \textit{Sci. China. Math.} \textbf{57}(10)(2014), 2143-2152.
	
	
\end{thebibliography}
\end{document}